\NeedsTeXFormat{LaTeX2e}

\documentclass[a4paper]{amsart}
\usepackage{amsmath,epsf}
\usepackage{mathrsfs}
\usepackage{hyperref}
%\externaldocument{tch}

%\usepackage{baskervilleextended}
\usepackage{amssymb,stmaryrd}
\usepackage{enumitem}
%\usepackage{pdftricks}
%\begin{psinputs}
%\usepackage{pstricks}
%\usepackage{pst-node}
%\end{psinputs}
\usepackage{tikz} \usetikzlibrary{matrix,arrows,decorations.pathmorphing}

%\renewcommand{\baselinestretch}{1.5}

%\usepackage{showkeys}

%\usepackage{mathtime}   %<<<---in here `k' looks curly
%\CompileMatrices

%\frenchspacing

\subjclass[2000]{Primary 03C10, 12H10, 03C60, 11G25. Secondary 14G10, 14G15.}
\keywords{difference scheme, Galois stratification, Galois formula, Frobenius automorphism, ACFA}
\title{Direct twisted Galois stratification}
\date{\today}

\author{Ivan Toma{\v s}i{\'c}}
%\classno{03C10, 12H10, 03C60, 11G25 (primary), 14G10, 14G15 (secondary)}
\address{Ivan Toma{\v s}i{\'c}\\
         School of Mathematical Sciences\\
  	Queen Mary University of London\\
         London, E1 4NS\\
        United Kingdom}
\email{i.tomasic@qmul.ac.uk}
%\DeclareMathAlphabet{\curly}{OML}{zplm}{m}{it}%Pazo font
%\theoremstyle{plain}
\newtheorem{theorem}{Theorem}[section]
\newtheorem{corollary}[theorem]{Corollary}
\newtheorem{proposition}[theorem]{Proposition}
\newtheorem{lemma}[theorem]{Lemma}
\newtheorem{fact}[theorem]{Fact}

\theoremstyle{definition}
\newtheorem{definition}[theorem]{Definition}
\newtheorem{example}[theorem]{Example}

\theoremstyle{remark}
\newtheorem{remark}[theorem]{Remark}

\newtheorem{notation}[theorem]{Notation}

%\newnumbered{definition}[theorem]{Definition}
%\newnumbered{example}[theorem]{Example}
%\newnumbered{remark}[theorem]{Remark}
%\newnumbered{question}[theorem]{Question}
%\newnumbered{conjecture}[theorem]{Conjecture}
%\newnumbered{notation}[theorem]{Notation}
%\newnumbered{claim}{Claim}

\providecommand{\Z}{\mathbb{Z}}

\providecommand{\N}{\mathbb{N}}
\providecommand{\p}{\mathfrak{p}}

\providecommand{\cF}{\mathcal{F}}

\providecommand{\cA}{\mathcal{A}}
\providecommand{\cB}{\mathcal{B}}

\providecommand{\kk}{\mathbf{k}}

\providecommand{\spec}{\text{\rm Spec}}
\providecommand{\acfa}{\text{\sc ACFA}}
\providecommand{\frob}{\text{\sc Frob}}
\providecommand{\gen}{\text{\rm gen}}

\providecommand{\Q}{\mathbb{Q}}
\providecommand{\F}{\mathbb{F}}
\providecommand{\Af}{\mathbb{A}}

\providecommand{\id}{\text{\rm id}}
\providecommand{\Gal}{\text{\rm Gal}}

\providecommand{\Sch}{\text{\rm Sch}}
\providecommand{\Set}{\text{\rm Set}}
\providecommand{\Et}{\text{\rm \'Et}}

\providecommand{\Hom}{\text{\rm Hom}}

\providecommand{\Aut}{\text{\rm Aut}}

\newenvironment{quotestd}
               {\list{}{\rightmargin\leftmargin}%
                \item\relax}
               {\endlist}

\providecommand{\ztild}[1]{\rlap{$\smash{\tilde{\phantom{#1}}}$}\rlap{$\mathring{\phantom{#1}\kern1.1ex}$}#1\kern.1ex}

\providecommand{\lexp}[2]{{\vphantom{#2}}^{#1}{\kern-.2ex#2}}

\providecommand{\rlexp}[2]{\lexp{#2}{#1}}

\providecommand{\acirc}[1]{\phantom{a}\llap{$\scriptstyle#1$}
\kern.01ex\lower.75ex\hbox{$\smash{\mathring{}}$}}

\providecommand{\lzexp}[3]{{\vphantom{#2}}^{\lower0.0ex\hbox{\smash{$\acirc{#1}$}}}\kern-.1ex #2}

\providecommand{\lrexp}[3]{{\vphantom{#2}}^{#1}{\kern-.1ex#2^#3}}

\begin{document}
\maketitle
\begin{abstract}
The theory ACFA admits a primitive recursive quantifier elimination procedure. It is therefore primitive recursively decidable.
\end{abstract}

\section{Introduction}
\subsection{Background}
The role of our work in Model Theory of fields with powers of Frobenius and existentially closed difference fields is analogous to the role that Galois stratification of Fried, Haran, Jarden and Sacerdote
(\cite{fried-jarden}, \cite{fried-sacer}, \cite{FHJ}), played in Model Theory of finite and pseudofinite fields, providing a more precise form, as well as the effectivity of quantifier elimination. In this light, our work will have an impact in the study of exceptional difference polynomials, difference version of Davenport's problem, graphs definable in fields with Frobenii and existentially closed difference fields, and many other areas inspired by applications of the classical Galois stratification over finite and pseudofinite fields.

In papers \cite{ive-tgs} and \cite{ive-tgsacfa}, we developed a theory of twisted Galois stratification for generalised difference schemes, and we established a rather \emph{fine quantifier elimination} result, stating that every first-order formula in the language of difference rings is equivalent to a \emph{Galois formula} modulo the theory ACFA of existentially closed difference fields, where the latter formulae are associated with \emph{finite} Galois covers of difference schemes. We argued that the elimination procedure was \emph{effective} in the sense that is was primitive recursive reducible to a few natural operations in difference algebra (the status of which  is unknown at the moment). 

In this paper, we develop \emph{direct twisted Galois stratification} in the context of direct presentations of difference schemes, which approximates the difference scheme framework to a sufficient order. We show a slightly coarser quantifier elimination result, Theorem~\ref{dirqe}, which (informally) states that 
\begin{quotestd}
every first-order formula is equivalent to a \emph{direct Galois formula} modulo ACFA, or over the fields with Frobenii, 
\end{quotestd}
 where the latter formulae are associated with direct Galois covers. 
Even though the class of  direct Galois formulae is coarser than that of Galois formulae, direct Galois formulae are equivalent (\ref{dirgal-fo}) to the $\exists_1$-formulae that appear after the known \emph{logic quantifier elimination} for ACFA from \cite{angus} and \cite{zoe-udi}.  

Our main result (Theorems \ref{effdirqe} and \ref{effdirqe-frob}) is that 
\begin{quotestd}
 the quantifier elimination procedure for ACFA and for fields with Frobenii is %outright 
 \emph{primitive recursive}. 
\end{quotestd}
Given that working with direct presentations essentially reduces to working with algebraic varieties and correspondences between them, this follows by applying methods of classical effective/constructive algebraic geometry in our framework. Consequently, ACFA and the first-order theory of fields with Frobenii are decidable by a primitive recursive procedure, see Corollaries~\ref{prdec} and \ref{prdec-frob}.

The present paper is not a variant of \cite{ive-tgs} and \cite{ive-tgsacfa} since the whole machinery of direct Galois covers had to be developed from first principles, which is significantly more intricate than previous considerations involving difference schemes. There is no direct interaction with the methods of the previous papers, they only provide ideological guidance to identify the main conceptual steppingstones in the stratification procedure.

\subsection{Direct presentations of difference schemes}

Let $(k,\varsigma)$ be a difference field, let $X$ be an algebraic variety over $k$, let $X_\varsigma$ denote the base change of $X$ via 
$\varsigma:k\to k$, and let $W\subseteq X\times X_\varsigma$ be a closed subvariety. Let $(F,\varphi)$ be any difference field extending $(k,\varsigma)$. The intuitive idea that sets of the form
$$
\{x\in X(F):(x,x\varphi)\in W\}
$$
should correspond to sets of $(F,\varphi)$-points of a `difference variety' has been around from the beginning of research on difference algebra, and it was particularly useful in the model-theoretic study of difference fields, as in \cite{angus} and \cite{zoe-udi}.

Although the above data determines a `directly presented difference scheme' defined in \cite{udi}, we choose to minimise the use of the framework of difference schemes, and remain in the context of their \emph{direct presentations}, using the classical language of algebraic schemes and correspondences (we only use difference schemes to control parameters, since the alternative leads to rather cumbersome notation). The benefit of this approach is that we can profit from the methods of effective %(historically called `constructive') 
algebraic geometry in order to prove that our constructions are primitive recursive.

\subsection{Generalised difference schemes}

The logic quantifier elimination mentioned above states that a first-order formula in the language of difference rings is equivalent to an existential formula modulo ACFA. Intuitively, such a formula chooses one of the finitely many possible behaviours of the difference operator on some finite difference ring extension. We encode such an extension through the notion of a \emph{Galois cover} of direct presentations
$$
(X,\Sigma)\to (Y,\sigma),
$$ 
where $\Sigma$ is the set of all possible lifts of the difference operator $\sigma$ from $Y$ to the `finite' cover $X$. The choice of a lift $\tilde{\sigma}\in\Sigma$ `up to isomorphism', i.e., up to the action of the associated Galois group $G$ amounts to the choice of a $G$-conjugacy class $C$ in $\Sigma$. Hence the data 
$$
\langle(X,\Sigma)/(Y,\sigma),C\rangle
$$
constitutes the basic building blocks of our Galois formulae.

In order to afford the existence of Galois covers and to provide a notational device for discussing  several possible lifts of the difference operator, we must make our framework flexible enough to include presentations of \emph{generalised difference schemes}, developed in \cite{ive-tch}. Having that in mind, the real difficulty of this project is to isolate a robust enough
notion of a Galois cover which will  perpetuate through a number of operations and constructions (cf.~Subsection~\ref{ss:constr-dirgal}). 
%(unlike the intrinsic notion of a Galois cover of difference schemes which is naturally preserved by all such constructions). 

\subsection{Organisation of the paper}

In Section~\ref{s:dir-pres} we define (generalised) direct presentations of difference schemes and their morphisms, and consider their structure and basic properties.

In Section~\ref{s:dir-gal-cov}, we define direct Galois covers of direct presentations, consider their basic properties, and show their permanence properties under the constructions needed in the sequel.

In Section~\ref{s:gal-fmla}, we define direct Galois stratifications and their associated formulae, and give a preliminary comparison of Galois formulae to first-order formulae over existentially closed difference fields (and asymptotically over fields with powers of Frobenius).

This work is completed in Section~\ref{s:dirim-th}, where we prove a direct image theorem \ref{cor-dirdirim}, stating that a direct image of a Galois formula by a morphism of direct presentations is equivalent modulo ACFA (or fields with Frobenius) to a Galois formula. Given that existential quantification can be thought of as taking direct images via projections, this immediately implies a quantifier elimination result for the class of Galois formulae, and shows that it coincides with the class of first-order formulae (\ref{dirqe}). We compute direct images along arbitrary morphisms, rather than just projections, in order to benefit from general decompositions of morphisms which allow us to reduce the computation to manageable cases \ref{dirim-finet} and \ref{dirim-geom-conn}/\ref{dirim-geom-conn-frob}.

In Section~\ref{s:eff-qe}, we review the preceding sections with our `effective goggles' on and argue that the quantifier elimination procedure reduces to the known constructions in effective algebraic geometry and is therefore primitive recursive (\ref{effdirqe}). We show how to draw consequences in effective/constructive difference algebra by proving that the \emph{perfect ideal membership problem} is primitive recursive (\ref{pr-perfidmem}).

Finally, Appendix~\ref{s:comp-dir-pres} gives a comparison of the framework of direct presentations and that of directly presented difference schemes.

\section{Directly presented difference schemes}\label{s:dir-pres}

In view of the explanation from the Introduction that we shall be working with direct presentations rather than the associated difference schemes, the title of this section is symbolic and hints at the fact that we borrowed the adjective `direct' from Hrushovski, and that our framework ties in nicely with that of directly presented schemes, as discussed in the Appendix. 

In the sequel, all rings are commutative with identity. By an \emph{algebraic variety} over a ring $R$, we will mean a reduced separated scheme of finite presentation over $\spec(R)$, and morphisms of varieties are assumed to be locally of finite presentation.

\providecommand{\sadir}{\mathscr{D}^\text{\rm a}}
\providecommand{\sdir}{\mathscr{D}}
\providecommand{\vadir}{\mathscr{D}^\text{\rm av}}
\providecommand{\vdir}{\mathscr{D}^\text{\rm v}}

\subsection{Difference rings}

A \emph{difference ring} is a pair 
$$(R,\varsigma)$$ consisting of a ring $R$ and an endomorphism $\varsigma:R\to R$. 

A difference ring homomorphism $$f:(R,\varsigma)\to(R',\varsigma')$$
is a ring homomorphism $f:R\to R'$ satisfying $$\varsigma' f=f\varsigma.$$

\begin{definition}
A difference ring $(R,\varsigma)$ is called:
\begin{enumerate}
\item \emph{inversive}, if $\varsigma$ is bijective;
\item a \emph{transformal domain}, if $R$ is a domain and $\varsigma$ is injective;
\item a \emph{difference field}, if $R$ is a field.
\end{enumerate}
\end{definition}
 
\begin{fact}[{\cite[Chapter 2, Theorem~II]{cohn}}]
Every difference ring $(R,\varsigma)$ with $\varsigma$ injective has an \emph{inversive closure} $(R^\text{\rm inv},\varsigma^\text{\rm inv})$ with the following universal property. There is an embedding $(R,\varsigma)\hookrightarrow(R^\text{\rm inv},\varsigma^\text{\rm inv})$ such that every morphism $(R,\varsigma)\to (S,\tau)$ to an inversive difference ring factors through $R^\text{\rm inv}$.
\end{fact}

\begin{notation}\label{R-minus-n}
Let $(R,\varsigma)$ be a transformal domain. We write 
$$
R_{-n}=(\varsigma^\text{inv})^{-n}(R),
$$ 
considered as a difference subring of $R^{\mathop{\rm inv}}$.
% such that for every $a\in R$, there exists a $b\in R_{-n}$ with $\varsigma^n(b)=a$. 
\end{notation}

\begin{remark}
Let $(R,\varsigma)$ be a difference ring, and let $S\subseteq R$ be a multiplicatively closed subset with $\varsigma(S)\subseteq S$. Then the localisation
$$
S^{-1}R
$$
has a natural structure of a difference ring.
\end{remark}

\begin{definition}
Let $(R,\varsigma)$ be a transformal domain. 
\begin{enumerate}
\item A (finite) $\varsigma$-localisation of $R$ with respect to $f\in R\setminus\{0\}$ is the difference ring $S^{-1}R$, where $S$ is the multiplicative set generated by $\{\varsigma^{i}(f): i\in\N\}$.

\item By convention, whenever we mention a \emph{$\varsigma$-localisation of $R$}, we shall mean a finite $\varsigma$-localisation with respect to some $f$ as above.
\end{enumerate}
\end{definition}

\begin{definition}
The \emph{affine difference scheme} associated to a difference ring $(R,\varsigma)$ is
$$\spec^\varsigma(R)=\{\p\in\spec(R):\varsigma^{-1}(\p)=\p\},$$
together with the \emph{Zariski topology}, as well as the structure sheaf,  induced from $\spec(R)$. It is the fixed point set of 
$$
\spec(R,\varsigma)=(\spec(R),\lexp{a}{\varsigma}).
$$

Write $\mathcal{S}=\spec^\varsigma(R)$.
\begin{enumerate}
\item 
For a point $s\in\mathcal{S}$,  we write $\mathfrak{j}_s$ for the associated $\varsigma$-prime ideal in $R$. 
\item 
The \emph{local ring at $s\in\mathcal{S}$} is just the (difference) localisation $R_{\mathfrak{j}_s}$, and
its residue field $\kk(s)$ is naturally a difference field, equipped with the induced endomorphism $\varsigma^s$.
\item 
A point $s\in\mathcal{S}$ is \emph{closed}, if it is closed in the Zariski topology, i.e., if $\mathfrak{j}_s$ is maximal among the $\varsigma$-prime ideals in $R$. 
\end{enumerate}
\end{definition}

\subsection{Direct presentations}

\begin{notation}
Let $(R,\varsigma)$ be a difference ring, let $S=\spec(R)$ and, by a slight abuse of notation, write $\varsigma$ for the scheme morphism $\lexp{a}{\varsigma}:S\to S$ induced by $\varsigma$. We write
$$
(\mathord{-})_\varsigma:\Sch_{\mathord{/}S}\to \Sch_{\mathord{/}S}
$$  
for the \emph{base change functor via $\varsigma$}. For an $S$-scheme $Y$, its $\varsigma$-twist is
$$
Y_\varsigma=Y\times_SS
$$
as shown in the cartesian diagram
$$
 \begin{tikzpicture} 
 [cross line/.style={preaction={draw=white, -,
line width=3pt}}]
\matrix(m)[matrix of math nodes, minimum size=1.7em,
inner sep=0pt,
row sep=1.5em, column sep=1em, text height=1.5ex, text depth=0.25ex]
 { 
                        |(P)|{Y_{\varsigma}} 	& |(2)| {Y}          \\
                       |(1)|{S}             & |(h)|{S} \\};
\path[->,font=\scriptsize,>=to, thin]
(P) edge  (1) edge (2)
(1) edge node[above]{$\varsigma$}  (h)
(2) edge  (h)
;
\end{tikzpicture}
$$
and we extend the definition to morphisms in a natural way.
\end{notation}

\begin{definition}
Let $(R,\varsigma)$ be a transformal domain, and write $S=\spec(R)$.
We define the category of \emph{almost direct presentations} $\sadir=\sadir_{(R,\varsigma)}$ as follows.
\begin{enumerate}
\item\label{dpdiag} An object $(X,\Sigma)$ consists of $S$-schemes $X_0$ and $X_1$ and a collection of commutative diagrams of scheme morphisms
\begin{center}
 \begin{tikzpicture} 
\matrix(m)[matrix of math nodes, row sep=2em, column sep=2em, text height=1.5ex, text depth=0.25ex]
 {
 |(2)|{X_0}		& |(3)|{X_1} 	& |(4)|{X_0}\\
 |(l2)|{S}		& |(l3)|{S} 		& |(l4)|{S}\\
 }; 
\path[->,font=\scriptsize,>=to, thin]
(3) edge node[above]{${\pi_1}$} (2) (3) edge node[above]{${\sigma}$}   (4)
(l3) edge node[above]{${\mathop{\rm id}}$} (l2) (l3) edge node[above]{${\varsigma}$} (l4)
(2) edge  (l2) (3) edge (l3) (4) edge  (l4);
\end{tikzpicture}
\end{center}
indexed by $\sigma\in\Sigma$. Note that $\pi_1$ is a morphism of $S$-schemes, while each $\sigma$ is only a $\varsigma$-linear scheme morphism. 

\item A morphism $f:(X,\Sigma)\to(Y,T)$ consists of a map $\rlexp{()}{f}:\Sigma\to T$ and $S$-morphisms $f_0:X_0\to Y_0$, $f_1:X_1\to Y_1$, making the diagram
$$
 \begin{tikzpicture}
[cross line/.style={preaction={draw=white, -,
line width=4pt}}]
\matrix(m)[matrix of math nodes, row sep=.9em, column sep=.5em, text height=1.5ex, text depth=0.25ex]
{			& |(x0)| {X_0}	&			& |(x1)| {X_1} 	&			& |(x0s)| {X_0}	\\   [.2em]
|(y0)|{Y_0} 	&			& |(y1)|{Y_1} 	&			&  |(y0s)| {Y_0}	&			\\  [.4em]
%		& |(d4)| {X}	&			& |(d3)| {Y}	\\   %[.8cm]
			& |(s0)|{S} 		&			& |(s1)|{S} 	&			& |(s0s)|{S} 				\\};
\path[->,font=\scriptsize,>=to, thin]
(x1) edge node[above]{$\pi_1$} (x0) edge node[above]{$\sigma$} (x0s) 
	edge node[left,pos=0.3]{$f_1$} (y1) edge (s1)
(x0) edge node[left,pos=0.3]{$f_0$} (y0) edge (s0)
(x0s) edge node[left,pos=0.3]{$f_0$} (y0s) edge (s0s)
(y0) edge (s0)
(y0s) edge (s0s)
(y1) edge [cross line] node[above,pos=0.2]{$\pi_1$} (y0) edge [cross line] node[above,pos=0.8]{$\rlexp{\sigma}{f}$} (y0s)  edge (s1)
(s1) edge node[above]{$\mathop{\rm id}$} (s0) edge node[above]{$\varsigma$} (s0s) 
%(d4) edge node[pos=0.25,left]{$\lexp{a}{\sigma}$} (u4)  edge node[pos=0.25, below]{$\lexp{a}{\varphi}$} (d3)

%(u1) edge [cross line] node[pos=.8, above]{$\lzexp{a}{\varphi}{0}$} (u2)  edge node[auto]{$i$} (u4)
	
%(d2) edge [cross line] node[pos=0.75, right]{$\lzexp{a}{\tau}{0}$} (u2)  edge node[below]{$j$} (d3)
%(u4) edge node[above]{$\lexp{a}{\varphi}$} (u3)	
%(u2) edge node[pos=0.4,above]{$j$} (u3)	
%(d3) edge node[right]{$\lexp{a}{\tau}$} (u3);
;
\end{tikzpicture}
$$ 
commutative for every $\sigma\in\Sigma$.
\end{enumerate}
The category of \emph{direct presentations} is the full subcategory $\sdir$ of $\sadir$ consisting of those objects $(X,\Sigma)$ for which the diagram in
(\ref{dpdiag}) induces a closed immersion $X_1\hookrightarrow X_0\times_SX_0$ %$X_1\hookrightarrow X_0\times_SX_{0,\varsigma}$ 
for every $\sigma\in\Sigma$.
\end{definition}

\begin{definition}
Let $(R,\varsigma)$ be a transformal domain, and write $S=\spec(R)$.
We define the category $\vadir=\vadir_{(R,\varsigma)}$ as follows.
\begin{enumerate}
\item\label{dvpdiag} An object $(X,\Sigma)$ consists of $S$-schemes $X_0$ and $X_1$ and a diagram of $S$-morphisms
$$
 \begin{tikzpicture} 
\matrix(m)[matrix of math nodes, row sep=2em, column sep=2em, text height=1.5ex, text depth=0.25ex]
 {
 |(2)|{X_0}		& |(3)|{X_1} 	& |(4)|{X_{0\varsigma}}\\
 }; 
\path[->,font=\scriptsize,>=to, thin]
(3) edge node[above]{${\pi_1}$} (2) (3) edge node[above]{${\pi_2(\sigma)}$}   (4);
%(l3) edge node[above]{${\mathop{\rm id}}$} (l2) (l3) edge node[above]{${\varsigma}$} (l4)
%(2) edge  (l2) (3) edge (l3) (4) edge  (l4);
\end{tikzpicture}
$$
for every $\sigma\in\Sigma$.
\item A morphism $f:(X,\Sigma)\to(Y,T)$ consists of a map $\rlexp{()}{f}:\Sigma\to T$ and $S$-morphisms $f_0:X_0\to Y_0$, $f_1:X_1\to Y_1$, making the diagram
$$
 \begin{tikzpicture}
[cross line/.style={preaction={draw=white, -,
line width=4pt}}]
\matrix(m)[matrix of math nodes, row sep=2.2em, column sep=2.2em, text height=1.5ex, text depth=0.25ex]
{
|(x0)| {X_0}		&			 |(x1)| {X_1} 			& |(x0s)| {X_{0\varsigma}}	\\  
|(y0)|{Y_0} 		&			 |(y1)|{Y_1} 			& |(y0s)|{Y_{0\varsigma}} 				\\};
\path[->,font=\scriptsize,>=to, thin]
(x1) edge node[above]{$\pi_1$} (x0) edge node[above]{$\pi_2(\sigma)$} (x0s) 
	edge node[left,pos=0.5]{$f_1$} (y1) 
(x0) edge node[left,pos=0.5]{$f_0$} (y0)
(x0s) edge node[right,pos=0.5]{$f_{0\varsigma}$} (y0s)
(y1) edge node[above]{$\pi_1$} (y0) edge node[above]{$\pi_2(\rlexp{\sigma}{f})$} (y0s) 
;
\end{tikzpicture}
$$ 
commutative for every $\sigma\in\Sigma$.
\end{enumerate}
The category $\vdir$ is the full subcategory of $\vadir$ consisting of those objects $(X,\Sigma)$ for which the diagram in
(\ref{dvpdiag}) induces a closed immersion $X_1\hookrightarrow X_0\times_SX_{0,\varsigma}$ 
for every $\sigma\in\Sigma$.
\end{definition}

\begin{remark}
The commutative diagram
$$
 \begin{tikzpicture} 
 [cross line/.style={preaction={draw=white, -,
line width=3pt}}]
\matrix(m)[matrix of math nodes, minimum size=1.7em,
inner sep=0pt,
row sep=1.5em, column sep=1em, text height=1.5ex, text depth=0.25ex]
 { 
 	 |(3)|{X_1}  & 			&[2em]			\\
                       & |(P)|{X_{0\varsigma}} 	& |(2)| {X_0}          \\[1em]
                       &|(1)|{S}             & |(h)|{S} \\};
\path[->,font=\scriptsize,>=to, thin]
(P) edge  (1) edge (2)
(1) edge node[above]{$\varsigma$}  (h)
(2) edge[cross line]  (h)
(3) edge [bend right=10] (1) edge [bend left=10] node[above]{$\sigma$} (2) edge[dashed] node[pos=0.75,above right=-4pt]{$\pi_2(\sigma)$} (P)
;
\end{tikzpicture}
$$
yields an equivalence of categories between $\sadir$ and $\vadir$, as well as $\sdir$ and $\vdir$. While the reader used to working with varieties might prefer to work in $\vadir$, certain formalisms become tedious in that context. We therefore work with $\sadir$ and $\vadir$ interchangeably, bearing in mind that the type of an object automatically reveals the category it belongs to.
\end{remark}

\subsection{Points, realisations, fibres}

\begin{notation}
Let $(R,\varsigma)$ be a difference ring, and let $(F,\varphi)$ be an $(R,\varsigma)$-algebra (furnished with a morphism $(R,\varsigma)\to(F,\varphi)$). When needed, we consider 
$\spec(F,\varphi)$ as the object
$$\spec(F)\stackrel{\mathop{\rm id}}{\leftarrow}\spec(F)\stackrel{\varphi}{\rightarrow}\spec(F)$$ of $\sadir$. 
\end{notation}

\begin{definition}\label{dirpts}
Let $(X,\Sigma)$ be an object of $\sadir_{(R,\varsigma)}$ and let $(F,\varphi)$ be an $(R,\varsigma)$-algebra.
\begin{enumerate}
\item The set of \emph{$(F,\varphi)$-points} of $(X,\Sigma)$ is
\begin{equation*}
\begin{split}
(X,\Sigma)(F,\varphi) & =\Hom_{\sadir}(\spec(F,\varphi),(X,\Sigma)) \\
& \simeq \{x_1\in X_1(F): \sigma x_1=\pi_1x_1\varphi\text{\rm, for some }\sigma\in\Sigma\},
\end{split}
\end{equation*}

\item The set of \emph{$(F,\varphi)$-realisations} of an object $(X,\Sigma)$ is
\begin{equation*}
\begin{split}
(X,\Sigma)^\sharp(F,\varphi) &=\{x_0\in X_0(F):x\in(X,\Sigma)(F,\varphi) \}\\
&\simeq \pi_1\{x_1: x_1\in X_1(F), \ \sigma x_1=\pi_1x_1\varphi\text{\rm, for some }\sigma\in\Sigma\}.
\end{split}
\end{equation*}
\end{enumerate}
\end{definition}

\begin{remark}
The category $\sadir_{(R,\varsigma)}$ has products. Indeed, if $(X,\Sigma)$ and $(X',\Sigma')$ are objects of $\sadir_{(R,\varsigma)}$, then, writing $S=\spec(R)$,
$$
(X_0\times_SX_0',X_1\times_SX_1', \Sigma\times\Sigma')
$$
is their product in $\sadir_{(R,\varsigma)}$.
\end{remark}

\begin{definition}
Let $(X,\Sigma)$ be an object of $\sadir_{(R,\varsigma)}$ and let $s\in\spec^\varsigma(R)$. Then we consider $\spec(\kk(s),\varsigma^s)$ as an object in $\sadir_{(R,\varsigma)}$,
and we define the \emph{fibre} $(X_s,\Sigma_s)$ as the product
$$
(X,\Sigma)\times_{\spec(R,\varsigma)}\spec(\kk(s),\varsigma^s),$$
considered as an object of $\sadir_{(\kk(s),\varsigma^s)}$.

\end{definition}

\begin{remark}\label{rem-R-alg}
It is often beneficial to view an object $(X,\Sigma)$ of $\sadir_{(R,\varsigma)}$ as a family of objects $(X_s,\Sigma_s)$ parametrised by points $s\in\mathcal{S}=\spec^\varsigma(R)$.

If $x\in(X,\Sigma)(F,\varphi)$ as in \ref{dirpts} with $F$ a field, then it implicitly determines a homomorphism $(R,\varsigma)\to(F,\varphi)$, whose kernel is a $\varsigma$-prime ideal corresponding to some $s\in\mathcal{S}$ and $(F,\varphi)$ extends $(\kk(s),\varphi^s)$, so in fact we could write $x\in X_s(F,\varphi)$. Later on, when we become more mindful about the role of parameters, we may choose a parameter $s\in\mathcal{S}$ first, and then a field $(F,\varphi)$ extending $(\kk(s),\varsigma^s)$.

\end{remark}

\begin{remark}
For a direct presentation $(X,\sigma)$, %in view of \ref{dir-diff-corr} and \ref{dirpts}, 
we have a bijection
%$$[\varsigma]_R(X_0,X_1)(F,\varphi)=
$$(X,\sigma)^\sharp(F,\varphi)\simeq(X,\sigma)(F,\varphi).$$
\end{remark}
Intuitively speaking, the set of $(F,\varphi)$-points of a `directly presented difference scheme' associated with a direct presentation $(X,\sigma)$ coincides with those in the above Remark (the precise statement is \ref{dirprespts}). This justifies somewhat our habit to refer to the objects of $\sadir$ as `(almost) directly presented difference schemes'.

\subsection{Functors of points and realisations}

\begin{remark}\label{funct-pt-real}
Let $(X,\Sigma)$ be an object of $\sadir_{(R,\varsigma)}$.
Items (1) and (2) from \ref{dirpts} define the functors 
$$(X,\Sigma)^\flat\ \ \ \text{ and }\ \ \ (X,\Sigma)^\sharp$$
 from the category of $(R,\varsigma)$-algebras to the category of sets. We shall write $X$ in place of $X^\flat$ whenever it is clear from the context that we wish to refer to the functor of points.
\end{remark}

We extend the above notation to a context suitable for our intended arithmetical applications.

\begin{definition}\label{funct-pt-real-res}
Let $(R,\varsigma)$ be a transformal domain. % and let $\mathcal{S}=\spec^\varsigma(R)$. 
We consider a category consisting of pairs 
$$
(s,(F,\varphi)),
$$
where $s$ ranges in a subset of $\spec^\varsigma(R)$ and $(F,\varphi)$ belongs to a chosen class of difference fields
% is an
%algebraically closed difference field 
extending $(\kk(s),\sigma^s)$. A morphism between
$(s,(F,\varphi))$ and $(s',(F',\varphi'))$ exists only when $s$ is a specialisation of $s'$, and it is then given by a diagram
$$
 \begin{tikzpicture} 
 [cross line/.style={preaction={draw=white, -,
line width=3pt}}]
\matrix(m)[matrix of math nodes, minimum size=1.7em,
inner sep=0pt,
row sep=1.5em, column sep=1.2em, text height=1.5ex, text depth=0.25ex]
 { 
                        |(P)|{F} 	& |(2)| {F'}          \\
                       |(1)|{\kk(s)}             & |(h)|{\kk(s')} \\};
\path[->,font=\scriptsize,>=to, thin]
(P) edge  (2)
(1) edge  (h)
(1) edge (P)
(h) edge  (2)
;
\end{tikzpicture}
$$
of difference field extensions.

The \emph{points functor} $X^\flat$ and the \emph{realisation functor} $X^\sharp$ associated to an object $(X,\sigma)$ of $\sadir_{(R,\varsigma)}$ are set-valued functors on the above category defined by
\begin{enumerate}
\item
$X^\flat(s,(F,\varphi))=X_s(F,\varphi);$     
\item 
$X^\sharp(s,(F,\varphi))=X_s^\sharp(F,\varphi),$
\end{enumerate}
so that  we have the relation $$X_s^\sharp(F,\varphi)=\pi_{1,s}\{x_1:x\in X_s(F,\varphi)\}=\{x_0:x\in X_s(F,\varphi)\}.$$
An \emph{$(R,\varsigma)$-subassignment} of $X^\sharp$ is any subfunctor $\cF$ of $X^\sharp$.
Namely, for any $(s,(F,\varphi))$ as above,
$$
\cF(s,(F,\varphi))\subseteq X_s^\sharp(F,\varphi),
$$
and for any $u:(s,(F,\varphi))\to(s',(F',\varphi'))$, 
$\cF(u)$ is the restriction of $X^\sharp(u)$ to $\cF(s,(F,\varphi))$. Similarly we define subassignments of $X^\flat$.
\end{definition}

%\begin{notation}
%In order to simplify notation, we shall henceforth write $X$ for the points functor $X^\flat$ whenever its use can be inferred from the context.
%\end{notation}

Note that, in view of \ref{rem-R-alg}, the functors from \ref{funct-pt-real-res} are just restrictions of those from \ref{funct-pt-real} to appropriate subcategories of the category of $(R,\varsigma)$-algebras.

\subsection{Properties of directly presented schemes}

\begin{definition}
Let $f:(X,\sigma)\to(Y,\sigma)$ be a morphism in $\vadir_{(R,\varsigma)}$, let $P$ be a property of $R$-schemes, and let $P'$ be a property of morphisms of $R$-schemes. We say that $X$ is \emph{directly $P$}, if $X_0$, $X_1$ and $X_{0\varsigma}$ have the property $P$. Similarly, we say that
$f$ is \emph{directly $P'$}, if the morphisms $f_0:X_0\to Y_0$, $f_1:X_1\to Y_1$ and $f_{0\varsigma}:X_{0\varsigma}\to Y_{0\varsigma}$ all have property $P'$. 
\end{definition}
In particular, an object $(X,\sigma)$ od $\vadir_{(R,\varsigma)}$ is called a \emph{direct variety} if $X_0$ and $X_1$ are $R$-varieties. 
\begin{definition}
A direct variety $(X,\sigma)$ %in $\vadir$ 
is \emph{H-direct}, if it is directly integral and the projections $\pi_1:X_1\to X_0$ and $\pi_2(\sigma):X_1\to X_{0\varsigma}$ are both dominant.
\end{definition}

\subsection{Decomposition into direct components}

The following `direct decomposition' algorithm is so natural that variants of it already appeared in numerous sources, for example as \cite[Solution to Problem I*, Chapter 8, no.~14]{cohn}, \cite[Proposition~2.2.1]{udi-mm} and \cite[Lemma~3.6]{ive-mark}.

\begin{proposition}\label{direct-decomp}
Let  $(X,\sigma)$ be a direct variety in $\vadir_{(R,\varsigma)}$, where $(R,\varsigma)$ is a transformal domain. Let $\eta$ be the generic point of $\spec^\varsigma(R)$ and write $X_{\eta}$ for the generic fibre of $X$ over $R$. 
We can find a finite %(bounded???) 
number of $H$-direct directly closed subschemes $(Y_i,\sigma)$ defined over a $\varsigma$-localisation $R'$ of $R_{-\dim(X_{0,\eta})}$ such that, for every difference field $(F,\varphi)$ over $(R',\varsigma)$, %with a morphism $(R_{{-}\dim(X_0)},\varsigma)\to(F,\varphi)$,
$$
X^\sharp(F,\varphi)=\cup_iY_i^\sharp(F,\varphi).
$$
\end{proposition}

\begin{proof}
Suppose $(X,\sigma)$ is given by a correspondence 
$X_0\stackrel{\pi_1}{\leftarrow}W\stackrel{\pi_2}{\rightarrow}X_{0\varsigma}$. By decomposing $W_\eta$ into irreducible components and $\varsigma$-localising $R$, we may assume $W$ is irreducible. Let $X_1$ be the Zariski closure of $\pi_1(W)$ in $X_0$, %(scheme-th image??), 
and let $X_2$ be the Zariski closure of $\pi_2(W)$ in $X_{0\varsigma}$. It follows that $X_1$ and $X_2$ are irreducible. If $X_{1\varsigma}=X_2$ the construction ends with the H-direct $X_1\stackrel{\pi_1}{\leftarrow}W\stackrel{\pi_2}{\rightarrow}X_{1\varsigma}$.

Otherwise, we consider the presentation $(X',\sigma)$ determined by $X'_0\leftarrow W'\rightarrow X'_{0\varsigma}$, where
$X'_0=X_1\cap X_{2\varsigma^{-1}}$ and $W'=(X'_0\times X'_{0\varsigma})\times_{X_0\times X_{0\varsigma}} W$ are both defined over $R_{{-}1}$. It is straightforward to verify that $(X,\sigma)^\sharp(F,\varphi)=(X',\sigma)^\sharp(F,\varphi)$ for any suitable $(F,\varphi)$. 
Moreover, denoting by $\eta'$ the generic point of $\spec^\varsigma(R_{-1})$, since $\dim(X'_{0,\eta'})<\dim(X_{0,\eta'})=\dim(X_{0,\eta})$, 
%$\dim(W')\leq\dim(W)$, 
we can continue by induction on dimension which clearly ends in at most $\dim(X_{0,\eta})$ steps.
\end{proof}

\subsection{Local properties of directly presented schemes}

In this subsection we work over a transformal domain $(R,\varsigma)$, and we implicitly allow a $\varsigma$-localisation of $R$ in every step that requires it.

\begin{proposition}\label{loc-improve}
Let $f:(X,\sigma)\to(Y,\sigma)$ be a morphism of direct varieties in $\sadir$, and let $P$ be a local property of morphisms of algebraic schemes (varieties) which is stable under base change. 
\begin{enumerate}
\item\label{dirgentgt} If $P$ is generic in the target, then the property of being directly $P$ is directly generic in the target.
\item\label{dirgensce} If $P$ is generic in the source, then the property of being directly $P$ is directly generic in the source.  
\end{enumerate}
\end{proposition}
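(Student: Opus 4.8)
The plan is to reduce both assertions to the ordinary genericity of $P$, applied separately to the three constituent morphisms $f_0\colon X_0\to Y_0$, $f_1\colon X_1\to Y_1$ and $f_{0\varsigma}\colon X_{0\varsigma}\to Y_{0\varsigma}$, and then to reassemble the three dense opens so produced into a single open subpresentation compatible with the structure maps $\pi_1$ and $\pi_2(\sigma)$. The permanence of $P$ through this reassembly is supplied by the two hypotheses on $P$: stability under base change controls the restriction of a \emph{target}, while locality controls the restriction of a \emph{source}.

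For part \ref{dirgentgt}, genericity of $P$ in the target yields dense opens $V_0\subseteq Y_0$, $V_1\subseteq Y_1$ and $V_{0\varsigma}\subseteq Y_{0\varsigma}$ over which the base changes of $f_0$, $f_1$ and $f_{0\varsigma}$ respectively have $P$. These are chosen independently and need not be compatible with $\pi_1,\pi_2(\sigma)$, so I would set $U_0=V_0$, $U_{0\varsigma}=V_{0\varsigma}$ and $U_1=V_1\cap\pi_1^{-1}(U_0)\cap\pi_2(\sigma)^{-1}(U_{0\varsigma})$; by construction $\pi_1$ and $\pi_2(\sigma)$ restrict to $(U,\sigma)$, which is thus an open subpresentation of $(Y,\sigma)$. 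Since each $U_i$ is an open subscheme of $V_i$ and $P$ is stable under base change, restricting the target of each constituent morphism along $U_i\hookrightarrow V_i$ preserves $P$; hence the base change of $f$ to $(U,\sigma)$ is directly $P$. Part \ref{dirgensce} is entirely symmetric: genericity in the source gives dense opens $W_0\subseteq X_0$, $W_1\subseteq X_1$ and $W_{0\varsigma}\subseteq X_{0\varsigma}$ on which the restrictions of the constituents have $P$, and one refines them to a compatible open subpresentation $(W,\sigma)$ of $(X,\sigma)$, this time invoking the locality of $P$ on the source to see that shrinking each $W_i$ to a smaller open preserves $P$.

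The main obstacle is ensuring that the refined open $U_1$ (respectively $W_1$) remains dense in $Y_1$ (respectively $X_1$): the preimages $\pi_1^{-1}(U_0)$ and $\pi_2(\sigma)^{-1}(U_{0\varsigma})$ are dense only when $\pi_1$ and $\pi_2(\sigma)$ send the generic point of each component of $Y_1$ to a generic point of its image, that is, when these structure maps are dominant onto the relevant components. I would remove this difficulty by first passing to the $H$-direct situation via \ref{direct-decomp}: there $Y_1$ is integral and $\pi_1,\pi_2(\sigma)$ are dominant, so the generic point of $Y_1$ lies in all three of $V_1$, $\pi_1^{-1}(V_0)$ and $\pi_2(\sigma)^{-1}(V_{0\varsigma})$, whence $U_1$ is automatically dense; the general case is then recovered by treating the finitely many $H$-direct pieces separately and reassembling the resulting dense opens. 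Throughout, the permitted $\varsigma$-localisations of $R$ allow me to discard the nongeneric loci over the base without affecting the conclusion.
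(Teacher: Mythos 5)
Your construction is essentially the paper's: apply genericity componentwise to $f_0$ and $f_1$, pull the resulting opens back along $\pi_1$ and $\pi_2(\sigma)$ to cut out a compatible open subpresentation, and use stability under base change to shrink targets and locality to shrink sources while preserving $P$. One step, however, needs repair. You obtain $V_{0\varsigma}\subseteq Y_{0\varsigma}$ by an \emph{independent} application of genericity to $f_{0\varsigma}$ and then set $U_{0\varsigma}=V_{0\varsigma}$. But an object of the category of almost direct presentations has only two constituent schemes ($X_0$ occurs twice), so an open subobject is determined by $U_0$ and $U_1$ alone, with the third component forced to be the $\varsigma$-twist $U_{0,\varsigma}$ of $U_0$; an arbitrary open $V_{0\varsigma}$ need not contain the twist of any dense open of $Y_0$, since $\varsigma$ is merely an endomorphism of the base and opens do not descend along the twist. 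So your triple $(U_0,U_1,U_{0\varsigma})$ is not literally an open subpresentation. The paper sidesteps this by \emph{defining} $V_{0\varsigma}$ as the base change of $V_0$ along $\varsigma$, with $P$ transported exactly by the assumed base-change stability (``by base change, $V_{0\varsigma}$ works for $f_{0\varsigma}$''); the same one-line move repairs your argument, after which your independent choice of $V_{0\varsigma}$ is superfluous.

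Your density worry, and the preliminary reduction to H-direct pieces via \ref{direct-decomp}, are not part of the paper's proof of \ref{loc-improve}: the paper works under the standing convention of this subsection that a $\varsigma$-localisation of the base is implicitly allowed at every step, and it postpones the decomposition into directly integral pieces, together with the devissage of the lower-dimensional complement, to the stratification proposition that follows, where \ref{direct-decomp} is invoked \emph{before} \ref{loc-improve}. Your addition is a harmless strengthening that makes the density of the constructed opens explicit in the H-direct case, but it addresses a requirement the statement, as actually used in the sequel, does not impose.
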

\begin{proof}
For (\ref{dirgentgt}), by genericity in the target, let $V_i\subseteq Y_i$ be open such that $f_i\restriction_{f_i^{-1}(V_i)}$ has property $P$, for $i=0,1$. By base change, $V_{0\varsigma}$ works for $f_{0\varsigma}$. Let $V=\pi_1^{-1}(V_0)\cap\pi_2^{-1}(V_{0\varsigma})\cap V_1$. Then 
$$
\begin{tikzpicture} 
\matrix(m)[matrix of math nodes, row sep=2em, column sep=1em, text height=1.5ex, text depth=0.25ex]
 {
 |(2)|{f_0^{-1}(V_0)}		& |(3)|{f_1^{-1}(V)} 	& |(4)|{f_{0\varsigma}^{-1}(V_{0\varsigma})}\\
 |(l2)|{V_0}		& |(l3)|{V} 		& |(l4)|{V_{0\varsigma}}\\
 }; 
\path[->,font=\scriptsize,>=to, thin]
(3) edge node[above]{} (2) (3) edge node[above]{}   (4)
(l3) edge node[above]{} (l2) (l3) edge node[above]{} (l4)
(2) edge  (l2) (3) edge (l3) (4) edge  (l4);
\end{tikzpicture}
$$
is directly $P$.

In the case (\ref{dirgensce}) of genericity in the source, let $V_i\subseteq Y_i$, $U_i\subseteq X_i$ be open such that
$f_i\restriction_{U_i\cap f_i^{-1}(V_i)}$ has property $P$, for $i=0,1$. By base change, $V_{0\varsigma}$ and $U_{0\varsigma}$ work for $f_{0\varsigma}$. Let $V=(\pi_1^Y)^{-1}(V_0)\cap(\pi_2^Y)^{-1}(V_{0\varsigma})\cap V_1$ and $U=(\pi_1^X)^{-1}(U_0)\cap(\pi_2^X)^{-1}(U_{0\varsigma})\cap U_1$. Then 
$$
\begin{tikzpicture} 
\matrix(m)[matrix of math nodes, row sep=2em, column sep=1em, text height=1.5ex, text depth=0.25ex]
 {
 |(2)|{U_0\cap f_0^{-1}(V_0)}		& |(3)|{U\cap f_1^{-1}(V)} 	& |(4)|{U_{0\varsigma}\cap f_{0\varsigma}^{-1}(V_{0\varsigma})}\\
 |(l2)|{V_0}		& |(l3)|{V} 		& |(l4)|{V_{0\varsigma}}\\
 }; 
\path[->,font=\scriptsize,>=to, thin]
(3) edge node[above]{} (2) (3) edge node[above]{}   (4)
(l3) edge node[above]{} (l2) (l3) edge node[above]{} (l4)
(2) edge  (l2) (3) edge (l3) (4) edge  (l4);
\end{tikzpicture}
$$
is directly $P$.
\end{proof}

\begin{corollary}\label{loc-improvement}
Let $f:(X,\sigma)\to(Y,\sigma)$ be a morphism of direct varieties in $\sadir_{(R,\varsigma)}$.
\begin{enumerate}
\item If $f$ is a map of directly integral schemes which has directly generically integral fibres, there is a direct localisation of $(Y,\sigma)$ over which $f$ is directly universally submersive (cf.~\ref{def-submer}, \ref{loc-submer}) with geometrically integral fibres.
\item If $f$ is directly generically \'etale,  there is a direct localisation of $(Y,\sigma)$ over which $f$ is directly finite \'etale.
\item If $f$ is directly generically smooth, there is a direct localisation $X'$ of $X$ and $Y'$ of $Y$ such that $f\restriction_{X'\cap f^{-1}(Y')}$ is directly smooth.  
\item If $(X,\sigma)$ is directly generically smooth (over $(R,\varsigma)$), there is a direct localisation of $X$ which is directly normal.
\end{enumerate}
\end{corollary}

\begin{proposition}
Let $f:(X,\sigma)\to(Y,\sigma)$ be a morphism of direct varieties in $\sadir_{(R,\varsigma)}$ and let $P$ be a property of morphisms of algebraic schemes which is generic in the source (or target). 
There exists stratifications of $(X,\sigma)$ and $(Y,\sigma)$ into finitely many directly integral locally closed sub-objects $(X_i,\sigma)$, $(Y_j,\sigma)$ (defined over a $\varsigma$-localisation of some $R_{-n}$) such that each restriction $f_i:(X_i,\sigma)\to(Y_{f(i)},\sigma)$ of $f$ is directly $P$.
\end{proposition}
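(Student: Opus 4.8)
The plan is to combine the integral decomposition of \ref{direct-decomp} with the transfer of genericity furnished by \ref{loc-improve}, running a Noetherian induction that peels off the locus where $P$ holds. I spell out the case in which $P$ is generic in the target; the source case is symmetric, shrinking $(X,\sigma)$ rather than $(Y,\sigma)$ and invoking \ref{loc-improve}(\ref{dirgensce}) in place of \ref{loc-improve}(\ref{dirgentgt}).

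First I would reduce to a directly integral target. Applying \ref{direct-decomp} to $(Y,\sigma)$ and forming the successive set-theoretic differences of the resulting directly closed pieces exhibits $(Y,\sigma)$ as a finite disjoint union of directly integral locally closed sub-objects, each defined over some $R_{-n}$; since a stratification of the target may always be refined, it suffices to treat each such piece on its own, so assume $(Y,\sigma)$ is directly integral. Restricting $f$ over this piece and decomposing the source in the same way, the problem becomes that of finitely many morphisms $f_i\colon(X_i,\sigma)\to(Y,\sigma)$ between directly integral objects.

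The inductive step then runs on $\dim(Y_0)$. By genericity of $P$ in the target and \ref{loc-improve}(\ref{dirgentgt}), each $f_i$ is directly $P$ over some directly open dense $V_i\subseteq(Y,\sigma)$; intersecting the finitely many $V_i$ produces one directly open dense $V\subseteq(Y,\sigma)$ over which every $f_i$ is directly $P$. The sub-object $V$, together with the already decomposed preimages of the $f_i$ over it, supplies the strata on which the property is achieved. The directly closed complement has strictly smaller dimension in $Y_0$, and the induction hypothesis applied to the restriction of $f$ over it finishes the construction. The induction terminates by Noetherianity, and as it descends only finitely often --- each descent lowering $\dim(Y_0)$, and each integral decomposition costing a bounded passage to some $R_{-n}$ --- all strata are ultimately defined over a single $\varsigma$-localisation of some $R_{-n}$, as claimed. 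In the source-generic case one instead takes $Y_{f(i)}$ to be the directly integral closure of the image $f(X_i)$, which is again directly integral, and shrinks each $X_i$ by a directly open dense set.

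The main obstacle is keeping every operation compatible with the direct structure: the formation of image closures, the intersections with $\varsigma$-twists that force the descent to $R_{-n}$, and the refinement to directly integral pieces must all be performed simultaneously on the three components $X_0$, $X_1$ and $X_{0\varsigma}$, and the open locus carrying $P$ must be directly open, i.e.\ open in all three at once. This last point is exactly what \ref{loc-improve} supplies, so that the genuine content of the argument is the bookkeeping of the strata and of the descending field of definition, both of which are tamed by the finiteness of the Noetherian induction.
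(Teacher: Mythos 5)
Your overall strategy---reduce to directly integral pieces via \ref{direct-decomp}, extract the locus where the morphism is directly $P$ via \ref{loc-improve}, and remove it in a dimension devissage---is exactly the paper's, but your termination argument contains a genuine slip. A direct localisation of $(Y,\sigma)$ produced by \ref{loc-improve} has middle term $V=\pi_1^{-1}(V_0)\cap\pi_2^{-1}(V_{0\varsigma})\cap V_1$, and a point $y$ of $(Y,\sigma)$ lies in that localisation precisely when $y_1\in V$. Hence the stratum still to be treated is the sub-object with middle component $Y_1\setminus V$ and with the two end components unchanged (or replaced by the closures of $\pi_1(Y_1\setminus V)$ and $\pi_2(Y_1\setminus V)$, which can be all of $Y_0$ and $Y_{0\varsigma}$ when the fibres of $\pi_1$ are positive-dimensional). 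So your claim that ``the directly closed complement has strictly smaller dimension in $Y_0$'' fails in general: the guaranteed drop is only in the middle correspondence component, and your induction on $\dim(Y_0)$ need not terminate as stated. Moreover, complementing simultaneously in all three components, as ``directly closed complement'' suggests, would lose the points with $y_0\in V_0$ but $y_1\notin V$.

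The paper sidesteps both issues by excising in the source rather than the target: after \ref{direct-decomp} and \ref{loc-improve} it passes to the complement $X_1\setminus U_1$ over $\overline{f_1(X_1\setminus U_1)}$, keeping $X_0$ and $X_{0\varsigma}$, and runs devissage on $\dim(X_1)$, which does strictly decrease; this single induction also handles the source-generic and target-generic cases uniformly, so no case split is needed. Your argument is repaired by replacing the induction quantity with $\dim(Y_1)$ (or simply adopting the paper's source-side excision); the remainder of your proof, including the tracking of fields of definition over $\varsigma$-localisations of $R_{-n}$ through the finitely many decomposition steps, matches the paper's.
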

%!!!!!!!!!!!!!  NB these are going to be on a $\varsigma$-localisation of some $R_{-n}$.
\begin{proof}
By \ref{direct-decomp} we may assume that $X$ and $Y$ are directly integral, and
by \ref{loc-improve}, we find localisations $(U,\sigma)$ of $X$ and $(V,\sigma)$ of $Y$ so that $f\restriction_U:U \to V$ is directly $P$. In the remaining complement
$$
\begin{tikzpicture} 
\matrix(m)[matrix of math nodes, row sep=2em, column sep=1em, text height=1.5ex, text depth=0.25ex]
 {
 |(2)|{X_0}		& |(3)|{X_1\setminus U_1} 	& |(4)|{X_{0\varsigma}}\\
 |(l2)|{Y_0}		& |(l3)|{\overline{f_1(X_1\setminus U_1)}} 		& |(l4)|{Y_{0\varsigma}}\\
 }; 
\path[->,font=\scriptsize,>=to, thin]
(3) edge node[above]{} (2) (3) edge node[above]{}   (4)
(l3) edge node[above]{} (l2) (l3) edge node[above]{} (l4)
(2) edge  (l2) (3) edge (l3) (4) edge  (l4);
\end{tikzpicture}
$$
the dimension of $X_1\setminus U_1$ is strictly lower than the dimension of $X_1$, and we continue by devissage.
\end{proof}

\section{Direct Galois covers}\label{s:dir-gal-cov}

\subsection{Classical Galois covers}

%\begin{definition}\label{def-dir-gal}
%An \emph{almost direct Galois cover} is a $\sadir$-morphism  $(X,\Sigma)\to(Y,\sigma)$ such that:
%\begin{enumerate}
%\item $X_i/Y_i$ is an algebraic (finite \'etale) Galois cover with group $G_i$, i=0,1;
%\item $\Sigma=G_0\tilde{\sigma}$, for some $\tilde{\sigma}\in\Sigma$;
%%\item $\aut_{\sadir}((X,\Sigma)/(Y,\sigma))\simeq G_1$;
%\item there exists a homomorphism $()^{\pi_1}:G_1\to G_0$ such that $\pi_1 g_1=g_1^{\pi_1}\pi_1$, for $g_1\in G_1$;
%\item\label{zvizda} for every $\tau\in \Sigma$, there exists a homomorphism $()^\tau:G_1\to G_0$ such that $\tau g_1=g_1^\tau\tau$ for $g_1\in G_1$.
%\end{enumerate}
%In this case we will say that $(G,\tilde{\Sigma})$ is the \emph{(almost) direct Galois group} of $(X,\Sigma)/(Y,\sigma)$, where $\tilde{\Sigma}=\{()^\tau:\tau\in\Sigma\}$. A \emph{direct Galois cover} is an almost direct Galois cover in $\sdir$.
%\end{definition}

We recall Grothendieck's theory of the \'etale fundamental group and extract  
some of the basic properties of Galois covers.

All schemes in this subsection are assumed to be  locally noetherian. 
A  finite \'etale morphism $X\to Y$ is called an \emph{\'etale cover}.   If $X$ and $X'$ are two \'etale covers of $Y$, we say that $X$ \emph{dominates} $X'$, if there exists a $Y$-morphism $X\to X'$.

\begin{definition}
Let $S$ be a scheme, and let $\bar{s}\in S(\Omega)$ be a geometric point of $S$ (where $\Omega$ is an algebraically closed field). Let $F=F_{\bar{s}}:\Et(S)\to\Set$ be the \emph{fibre functor} from the category of \'etale covers of $S$ to the category of sets given by
$$
F_{\bar{s}}(X)=X_s=X\times_S\spec(\Omega).
$$
The \emph{\'etale fundamental group} of $S$ (with base point $\bar{s}$) is defined as the profinite group
$$
\pi_1(S,\bar{s})=\Aut(F_{\bar{s}}).
$$ 
\end{definition}
\begin{fact}[\cite{sga1}]\label{groth-gal}
With the above notation, the fibre functor $F_{\bar{s}}$ defines an equivalence of categories between $\Et(S)$ and the category of $\pi_1(S,\bar{s})$-sets.

If $f:S\to S'$ is a morphism, the base change along $f$ functor $\Et(S')\to\Et(S)$ gives rise to a homomorphism
$$
\pi_1(f):\pi_1(S,\bar{s})\to\pi_1(S',f(\bar{s})).
$$
\end{fact}

\begin{definition}
Let $X\to Y$ be a connected \'etale cover. %with $X$ connected. 
Let $\bar{y}$ be a geometric point of $Y$, and let $F_{\bar{y}}(X)=X_{\bar{y}}$ be the geometric fibre of $X$ over $\bar{y}$. We say that $X$ is a \emph{Galois cover} of $Y$ if $\Aut(X/Y)$ acts simply transitively on $F_{\bar{y}}(X)$.
\end{definition}

\begin{fact}
\begin{enumerate}
\item In the correspondence of \ref{groth-gal}, $X\to Y$ is a Galois cover if and only if $F_{\bar{y}}(X)$ is a transitive $\pi_1(Y,\bar{y})$-set.
\item If $X$ is a Galois cover of $Y$ with group $G$, then $Y\simeq X/G$. 
\end{enumerate}
\end{fact}

\begin{fact}
If $X\to Y$ is a connected  \'etale cover, then there exists a least Galois cover $\tilde{X}\to Y$ which dominates $X$ as in the diagram
$$
\begin{tikzpicture} 
\matrix(m)[matrix of math nodes, row sep=1em, column sep=1em,text height=1.3ex, text depth=0.25ex]
{       
 		& |(u2)|{\tilde{X}}	\\
 |(d1)|{X} 					& 								\\
 								& |(b)|{Y}            					\\}; 
\path[->,font=\scriptsize,>=to, thin]
(u2) edge %[dashed]  
(d1)
(d1) edge (b)
(u2) edge %[dashed] 
(b);
\end{tikzpicture}
$$
in the sense that any other Galois cover $Z$ of $Y$ that dominates $X$ also dominates $\tilde{X}$. Such an $\tilde{X}$ is unique up to isomorphism and we call it the \emph{Galois closure of $X$ over $Y$}.
\end{fact} 

\begin{fact}[\cite{bourbaki-alg-comm}, V \S2.2, Corollaire \`a Th.~2]\label{bourbaki-lemma}
%Let $p:X\to Y$ be a Galois cover of (algebraic) schemes with group $G$. Let $L$ be a field and suppose that morphisms 
%$x_1,x_2:\spec(L)\to X$ satisfy $px_1=px_2$. Then there exists a $g\in G$ such that $x_2=gx_1$.
Suppose $G$ is a finite group acting on a ring $A$.  Let $f_1$ and $f_2$ be two homomorphisms from $A$ to a field $L$ with the same restriction to $A^G$. Then there exists a $g\in G$ such that $f_2=f_1 g$.
\end{fact}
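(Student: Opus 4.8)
The plan is to exploit the integrality of $A$ over its invariant subring $A^G$ together with the hypothesis that $f_1$ and $f_2$ agree on $A^G$. First I would record the elementary observation that powers the whole argument. For $a\in A$, form the \emph{orbit polynomial} $P_a(X)=\prod_{g\in G}(X-g(a))$. Its coefficients are elementary symmetric functions of the orbit $\{g(a):g\in G\}$, hence fixed by $G$, so $P_a\in A^G[X]$. Because $f_1$ and $f_2$ restrict to the same map on $A^G$, applying them coefficientwise yields the \emph{same} polynomial in $L[X]$, that is $\prod_{g}(X-f_1(g(a)))=\prod_{g}(X-f_2(g(a)))$. Evaluating both sides at $X=f_2(a)$, the right-hand side vanishes (the factor indexed by the identity is $f_2(a)-f_2(a)=0$), so $\prod_{g}(f_2(a)-f_1(g(a)))=0$ in $L$. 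As $L$ is a field, some factor is zero: for each $a\in A$ there is a $g\in G$, a priori depending on $a$, with $f_2(a)=f_1(g(a))$. Equivalently, writing $A_g=\{a\in A: f_2(a)=f_1(g(a))\}$ for the equaliser of the ring homomorphisms $f_2$ and $f_1\circ g$, I have shown that each $A_g$ is a subring of $A$ containing $A^G$ and that $A=\bigcup_{g\in G}A_g$.

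The remaining, and genuinely harder, task is to promote this pointwise statement to a uniform one: to find a single $g$ with $A_g=A$, i.e.\ $f_2=f_1\circ g$. I would do this in two steps. Let $\q=f_1^{-1}(0)\cap A^G=f_2^{-1}(0)\cap A^G$ be the common contraction of the primes $\p_i=\ker f_i$. In Step (a) I show that $G$ acts transitively on the primes of $A$ lying over $\q$: this follows from the same orbit-product idea combined with prime avoidance. If $\p_2\neq g(\p_1)$ for all $g$, then, using incomparability of primes in the integral extension $A^G\subseteq A$, one gets $\p_2\not\subseteq\bigcup_g g(\p_1)$, so I may choose $x\in\p_2$ lying outside every $g(\p_1)$; then $\prod_g g(x)\in A^G\cap\p_2=\q\subseteq\p_1$ forces some $g(x)\in\p_1$, contradicting the choice of $x$. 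Hence some $g$ satisfies $g(\p_1)=\p_2$, and replacing $f_1$ by $f_1\circ g^{-1}$ I may assume $\ker f_1=\ker f_2=\p$. In Step (b) the maps $f_1,f_2$ factor through injections of the domain $A/\p$ into $L$ that agree on the image of $A^G$, hence extend to $\mathrm{Frac}(A^G/\q)$-embeddings of $\mathrm{Frac}(A/\p)$ into $L$; since this residue extension is normal and the decomposition group $D=\{g:g\p=\p\}$ surjects onto its automorphism group, the two embeddings differ by the action of some $g\in D$, which yields $f_2=f_1\circ g$ on all of $A$.

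I expect Step (b), the residue-field part, to be the main obstacle. Step 1 and Step (a) are clean and self-contained, but passing from equal kernels to equal homomorphisms requires controlling the residue extension $\mathrm{Frac}(A/\p)/\mathrm{Frac}(A^G/\q)$: one must know that it is normal and that every automorphism over $\mathrm{Frac}(A^G/\q)$ is induced by the decomposition group (the standard theory of the $G$-action on residue extensions in an integral extension), and one must keep an eye on possible inseparability. Equivalently, one can phrase the whole of Step 2 as a ``finite union of subrings'' problem for $A=\bigcup_g A_g$ and close it by using the $G$-action on the covering rather than treating the $A_g$ as arbitrary subrings — which is precisely where the non-triviality resides, and is the reason the statement is attributed to, rather than re-proved from, the cited source.
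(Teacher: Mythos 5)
The paper does not actually prove this statement: it is imported as a \emph{Fact}, with the proof delegated entirely to the cited Corollaire in Bourbaki (V \S2.2). Your argument is, in essence, Bourbaki's own deduction of that corollary from Th\'eor\`eme~2 of the same section: part~(i) of that theorem is your Step~(a) --- transitivity of $G$ on the primes of $A$ over $\mathfrak{q}$, which you prove correctly by the norm-plus-prime-avoidance computation together with incomparability for the integral extension $A^G\subseteq A$ --- and part~(ii) is precisely the pair of facts you invoke in Step~(b), namely that the residue extension $\mathrm{Frac}(A/\mathfrak{p})/\mathrm{Frac}(A^G/\mathfrak{q})$ is quasi-Galois and that the decomposition group surjects onto its automorphism group. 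Since the statement is literally a corollary of that theorem in the source, resting Step~(b) on it is legitimate rather than circular, and the reduction is correct; the only point you assert without argument is that two embeddings of a normal extension into an arbitrary field $L$ (not assumed algebraically closed) which agree on the base have the same image, so that they differ by an automorphism --- this is true (the image is the set of roots in $L$ of the minimal polynomials of elements of the extension, and the purely inseparable part embeds uniquely), but it deserves a line, as it is where your flagged worry about inseparability is actually discharged.
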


\begin{corollary}\label{cor-to-bbk-lemma}
Let $p:X\to Y$ be a Galois cover with group $G$, and $\phi_1,\phi_2:Z\to X$ two morphisms from an integral scheme $Z$ satisfying $p\phi_1=p\phi_2$. Then there exists a $g\in G$ such that $\phi_2=g \phi_1$.
\end{corollary}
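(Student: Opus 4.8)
The plan is to reduce the geometric statement about morphisms of Galois covers to the algebraic statement of Fact \ref{bourbaki-lemma}, applied at the level of generic points. Since $X$ and $Y$ are integral, let $K=k(X)$ and $L=k(Y)$ be their function fields, so that $X\to Y$ being a Galois cover with group $G$ means that $K/L$ is a Galois field extension with $\Gal(K/L)\simeq G$, and $L=K^G$. The two morphisms $\phi_1,\phi_2:X\to X$ induce, by pullback on functions at the generic point, two $L$-algebra homomorphisms $\phi_1^*,\phi_2^*:K\to K$ (here the condition $p\phi_1=p\phi_2=p$ on the nose, via $\phi_i$ covering the identity on $Y$, is what guarantees that both pullbacks fix $L$, i.e.\ restrict to the \emph{same} map on $K^G=L$).

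With this translation in hand, I would apply Fact \ref{bourbaki-lemma} with $A=K$, the field $L$ in the role of the target field, and the two homomorphisms $f_i=\phi_i^*:K\to K\hookrightarrow \overline{K}$ having the same restriction to $A^G=K^G=L$. The Fact then yields an element $g\in G$ with $\phi_2^*=\phi_1^*\circ g$ as maps $K\to K$, where $g$ acts as the corresponding automorphism in $\Gal(K/L)$. Dualising back to schemes, this says precisely that $\phi_2=g\circ\phi_1$ on a dense open subset, and hence everywhere by separatedness and reducedness (two morphisms into a separated scheme agreeing on a dense subset of a reduced scheme agree). This gives the desired $g$.

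The main obstacle is the careful bookkeeping of variances: one must check that $p\phi_1=p\phi_2$ really does force $\phi_1^*$ and $\phi_2^*$ to agree on the invariant subring $K^G$, and that the element $g$ produced by Fact \ref{bourbaki-lemma} is the one whose geometric action realises $\phi_2=g\phi_1$ (as opposed to $\phi_1 g$), since left versus right composition is sensitive to the contravariance of $\spec$. A secondary subtlety is that Fact \ref{bourbaki-lemma} is stated for an action on a \emph{ring} with homomorphisms into a \emph{field}; I would either apply it directly at the generic fibre as above, or, if a scheme-theoretic argument over the whole of $X$ is preferred, apply it fibrewise at each point and then glue, using the finiteness of $G$ together with separatedness to upgrade the pointwise identity $\phi_2=g\phi_1$ to an identity of morphisms. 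The function-field version is cleaner and is what I would present.
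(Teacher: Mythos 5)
There is a genuine gap, and it lies in your very first step. The corollary places no dominance hypothesis on $\phi_1,\phi_2$: they are arbitrary morphisms $X\to X$ with $p\phi_1=p\phi_2$, and a non-dominant morphism does not induce any homomorphism $K\to K$ on function fields, because it sends the generic point of $X$ to a non-generic point. The statement genuinely needs this generality --- for instance, if $\phi_1$ is the constant map to a point $x$ and $\phi_2$ the constant map to a conjugate point $gx$, the hypotheses hold and the conclusion is $\phi_2=g\phi_1$; your function-field translation is unavailable here. This is not an artificial corner case: in the paper the corollary is invoked in Proposition~\ref{critdirgalcov} for the maps $\tilde{\sigma}$ and $\tilde{\sigma}g_1$ from $X_1$ to $X_0$, which are not assumed dominant. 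The paper's own proof avoids the issue by staying at the ring level: with $X=\spec(A)$ and $\phi_i$ corresponding to $f_i:A\to A$ (possibly non-injective!), it applies Fact~\ref{bourbaki-lemma} to the compositions $jf_1,jf_2:A\to \operatorname{Frac}(A)$, where $j:A\hookrightarrow\operatorname{Frac}(A)$ --- note that the Fact is deliberately stated for homomorphisms from a \emph{ring} $A$ into a field, with no injectivity requirement --- and then cancels the injective $j$ to get $f_2=f_1g$. Your proposal can be repaired by making exactly this substitution: apply the Fact to $A\to\operatorname{Frac}(A)$ rather than to $K\to K$; but as written, the reduction to field homomorphisms is where the argument fails.

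A secondary misreading: you assert $p\phi_1=p\phi_2=p$, i.e.\ that the $\phi_i$ are $Y$-morphisms and the pullbacks are $L$-algebra maps. The hypothesis is only $p\phi_1=p\phi_2$; the common composite $q=p\phi_i$ may differ from $p$ (again, $\tilde{\sigma}$ in \ref{critdirgalcov} is not over $Y$). This happens not to break your application of Fact~\ref{bourbaki-lemma}, since the Fact requires only that the two homomorphisms have the \emph{same} restriction to $A^G$, not that they fix it --- and $f_1\restriction_{A^G}=f_2\restriction_{A^G}$ is exactly what $p\phi_1=p\phi_2$ gives --- but your justification should be phrased that way. Your handling of the variance and the final separatedness/reducedness argument for upgrading agreement on a dense open to equality of morphisms are fine, though they become unnecessary once the argument is run at the level of $A$ as in the paper.
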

\begin{proof}
We may assume that $X=\spec(A)$, and that $\phi_i$ is associated to $f_i:A\to A$, $i=1,2$. Denote by $j$ the inclusion of $A$ in its fraction field. The previous Fact applied to $j f_1$ and $j f_2$ yields a $g\in G$ such that $j f_2=j f_1 g$. Since $j$ is injective, we deduce that $f_2=f_1g$, as required.
\end{proof}

\begin{corollary}\label{map-of-gal-gp}
Suppose we have a commutative diagram 
$$
 \begin{tikzpicture} 
 [cross line/.style={preaction={draw=white, -,
line width=3pt}}]
\matrix(m)[matrix of math nodes, minimum size=1.7em,
inner sep=0pt,
row sep=1.5em, column sep=1em, text height=1.5ex, text depth=0.25ex]
 { 
                        |(P)|{X} 	& |(2)| {X'}          \\
                       |(1)|{Y}             & |(h)|{Y'} \\};
\path[->,font=\scriptsize,>=to, thin]
(P) edge node[left]{$p$} (1) edge node[above]{$f$} (2)
(1) edge node[above]{$h$}  (h)
(2) edge node[right]{$p'$} (h)
;
\end{tikzpicture}
$$
where $p$ and $p'$ are Galois covers with groups $G$ and $G'$. Then we have a homomorphism 
$
\lexp{f}{()}:G\to G'
$
such that, for $g\in G$,
$$
\lexp{f}{g}\, f=f\, g.
$$
\end{corollary}
\begin{proof}
For $g\in G$, we have that $p'fg=hpg=hp=p'f$. Thus, by \ref{cor-to-bbk-lemma}, there is a unique element $g'\in G'$ such that $g'f=fg$. It is readily verified that the assignment $g\mapsto \lexp{f}{g}=g'$ is a homomorphism.
\end{proof}

\begin{definition}[{\cite[IX.2.1]{sga1}}]\label{def-submer}
A morphism  $f:S'\to S$ is \emph{submersive} if it is surjective and makes $S$ into a quotient topological space of $S'$,
i.e., a subset $U$ of $S$ is open if and only if $f^{-1}(U)$ is open in $S'$. A morphism is \emph{universally submersive} if every base change of it remains submersive. 
\end{definition} 
We will exploit the fact, proved in loc.~cit., that a faithfully flat quasi-compact morphism is universally submersive through the following.

\begin{lemma}\label{loc-submer}
Let $f:X\to Y$ be a dominant morphism of finite type of integral schemes whose generic fibre is geometrically integral.
Then there is an open dense subset $U$ of $Y$ such that $f^{-1}(U)\to U$ is faithfully flat (so universally submersive) with geometrically integral fibres.
\end{lemma}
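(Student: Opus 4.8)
The plan is to prove Lemma~\ref{loc-submer} by reducing it to the cited fact (from \cite{sga1}) that a faithfully flat quasi-compact morphism is universally submersive. Since $X$ and $Y$ are integral schemes of finite type and $f$ is dominant, the real content is to produce an open dense $U\subseteq Y$ over which $f$ becomes \emph{faithfully flat} with \emph{geometrically integral} fibres; universal submersiveness then follows formally from the quoted fact. First I would invoke generic flatness (the standard result that for a morphism of finite type between integral Noetherian schemes there is a dense open $U_1\subseteq Y$ with $f^{-1}(U_1)\to U_1$ flat). Because flat plus finite type plus dominant gives an open image, and by shrinking $U_1$ we can arrange that $f^{-1}(U_1)\to U_1$ is surjective, hence faithfully flat; finite type over Noetherian also secures quasi-compactness.

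Next I would address the geometric integrality of the fibres. The hypothesis is that the generic fibre $X_\eta$ (over the generic point $\eta$ of $Y$) is geometrically integral. The key point is that geometric integrality of fibres is a \emph{constructible and generically stable} condition: by the standard spreading-out / constructibility results (EGA~IV, e.g.\ the openness of the locus where fibres are geometrically integral for flat finite-type morphisms), the set of $y\in Y$ such that the fibre $X_y$ is geometrically integral contains a dense open subset $U_2$, precisely because it contains the generic point by hypothesis and is constructible. Intersecting, I set $U=U_1\cap U_2$, which is dense open in $Y$ since $Y$ is irreducible, and over $U$ the morphism $f^{-1}(U)\to U$ is faithfully flat, quasi-compact, and has geometrically integral fibres. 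Applying the cited fact yields universal submersiveness.

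The main obstacle, and the step deserving the most care, is the passage from the generic fibre being geometrically integral to the geometric integrality of \emph{all} fibres over some dense open. Flatness alone does not make integrality of fibres an open condition in general; one needs the combination of flatness, finite presentation, and the constructibility theorems that control how geometric properties of fibres vary in families (the relevant statements are the openness of the geometrically-integral locus for flat finite-type morphisms, and more basically that ``the generic fibre has property $Q$'' spreads out to a neighbourhood for constructible $Q$). I would therefore be explicit about citing the appropriate EGA~IV results rather than treating this as routine, since it is exactly here that the hypothesis on the generic fibre is consumed. Everything else — generic flatness, openness of the image of a flat morphism, and the reduction to \cite{sga1} — is standard and can be stated in a single line each.
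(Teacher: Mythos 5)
Your proposal is correct and follows essentially the same route as the paper's proof: generic flatness for $U_1$, shrinking to a dense open over which $f$ is surjective (hence faithfully flat and quasi-compact), and the constructibility of the locus of geometrically integral fibres --- the paper cites \cite[Tag 055G]{dejong-stacks} for exactly the spreading-out step you rightly flag as the one consuming the generic-fibre hypothesis. Your minor variation of obtaining surjectivity via openness of flat finite-type morphisms, where the paper simply invokes dominance to get surjectivity above a dense open, is immaterial.
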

\begin{proof} Let $U_1$ be on open dense subset of $Y$ such that $f$ is flat over $U_1$ by generic flatness.
Since $f$ is dominant, there exists an open dense set $U_2$ such that $f$ is surjective above $U_2$. 
By the constructibility of
the property of being geometrically integral \cite[Tag~0553, Tag~0574]{dejong-stacks}, %http://stacks.math.columbia.edu/tag/0553  ---or EGA???
% for geometrically irreducible use Tag 0553, for geom reduced 0574
we can find an open dense set $U_3$ such that over $U_3$, $f$ has geometrically integral fibres. Then let $U=U_1\cap U_2\cap U_3$.
\end{proof}

\begin{lemma}\label{dirimgal}
Let $f:X\to Y$ be an universally submersive morphism %of (algebraic) schemes 
with geometrically connected fibres and assume $Y$ is connected (whence it follows that $X$ is connected).  The base change functor $f^*:V\mapsto V\times_YX$ from the category of 
\'etale covers of $Y$ to the category of \'etale covers of $X$ is fully faithful and it has a left adjoint $f_*$, i.e., for every \'etale cover $Z\to X$ we have a morphism $Z\to f_*Z$ inducing the natural isomorphism
$$
\Hom_Y(f_*Z,V)=\Hom_X(Z,f^*V),
$$ 
for every \'etale cover $V\to Y$.

Moreover, $f^*$ and $f_*$ take Galois covers to Galois covers and every Galois cover $Z\to X$ yields an exact sequence 
$$
1\to \Gal(Z/f^*f_*Z)\to \Gal(Z/X)\to \Gal(f_*Z/Y)\to 1.
$$
\end{lemma}

Let $Z\to X$ be an \'etale cover as above. Note that the required $f_*Z$ is the solution to the following universal problem. We need to show that there exists an \'etale cover $W\to Y$ completing the diagram
$$
 \begin{tikzpicture} 
 [cross line/.style={preaction={draw=white, -,
line width=3pt}}]
\matrix(m)[matrix of math nodes, minimum size=1.7em,
inner sep=0pt,
row sep=1em, column sep=2.8em, text height=1.5ex, text depth=0.25ex]
 { 
 	 |(3)|{Z}  & 			&			\\
                       & |(P)|{f^*W}	& |(2)| {W}          \\[2em]
                       &|(1)|{X}             & |(h)|{Y} \\};
\path[->,font=\scriptsize,>=to, thin]
(P) edge  (1) edge (2)
(1) edge  (h)
(2) edge[cross line]  (h)
(3) edge (1) edge[cross line] (2) edge (P)
;
\end{tikzpicture}
$$
which is maximal in the sense that for any other \'etale cover $V\to Y$ which fits into an analogous diagram (i.e., $Z\to X$ dominates $f^*V$), $W\to X$ dominates $V\to X$. It will then follow that $W$ is unique up to isomorphism and we will denote it by $f_*Z$.

\begin{proof}
The base change functor from \'etale covers of $Y$ to \'etale covers of $X$ induced by $f$  is fully faithful by \cite[IX.3.4]{sga1}. % and so is our $f^*$.

If we choose a geometric point $\bar{x}$ in $X$ mapping onto $\bar{y}$ in $Y$, it was proved in \cite[IX.5.6]{sga1} that the homomorphism 
$$\pi_1(f):\pi_1(X,\bar{x})\to \pi_1(Y,\bar{y})$$ of \'etale fundamental groups is surjective. 

Let us introduce some abstract notation associated with profinite group actions on finite sets. Given an epimorphism  $\phi:\pi\to \pi'$  of profinite groups with kernel $K$, %=\ker{\phi}$, 
a $\pi$-set $E$ and a $\pi'$-set $E'$, we write
\begin{enumerate}
\item $\phi^*E'$ for the set $E'$ endowed with a $\pi$-action via $\phi$;
\item $\phi_*E$ for the set $E/K$ endowed with a natural $\pi'$-action.
\end{enumerate}
There is an obvious natural bijection
\begin{equation}\label{adj}
\Hom_{\pi'}(\phi_*E,E')\simeq\Hom_{\pi}(E,\phi^*E').\tag{\dag}
\end{equation}

Using this notation, given an \'etale cover $Z\to X$, we define $f_*Z$ to be the \'etale cover of $Y$ which
corresponds via \ref{groth-gal} to the $\pi_1(Y,\bar{y})$-set $\pi_1(f)_*F_{\bar{x}}(Z)$, i.e., to be the cover satisfying the property
$$
F_{\bar{y}}(f_*Z)\simeq \pi_1(f)_*F_{\bar{x}}(Z).
$$

On the other hand, if $V\to Y$ is an \'etale cover, we have that $F_{\bar{x}}(f^*V)\simeq F_{\bar{y}}(V)$ and $f^*V$ clearly corresponds to the $\pi_1(X,\bar{x})$-set $\pi_1(f)^*F_{\bar{y}}(V)$, so the required adjunction is a formal consequence of (\ref{adj}) and \ref{groth-gal}.

%If $N$ is an open normal subgroup of $\pi_1(X,\bar{x})$ corresponding to the Galois cover $Z\to X$, then its image is an open normal subgroup of $\pi_1(Y,\bar{y})$ and thus it corresponds to some Galois cover $W\to Y$. 

If $V\to Y$ is Galois, if follows that $f^*V$ is Galois since it is connected. If $Z\to X$ is Galois, then $\pi_1(X,\bar{x})$ acts transitively on $F_{\bar{x}}(Z)$, hence $\pi_1(Y,\bar{y})$ acts transitively on $\pi_1(f)_*F_{\bar{x}}(Z)=F_{\bar{y}}(f_*Z)$ and $f_*Z\to Y$ is Galois.

Note that the full faithfulness of $f^*$ yields that for every Galois cover $V\to Y$,  $\Gal(f^*V/X)\simeq \Gal(V/Y)$.  The exact sequence follows
from the particular case $V=f_*Z$.

Given the rather indirect flavour of the above proof making use of the theory the \'etale fundamental group and descent, let us give a direct construction of $W$ under the additional hypothesis that $X$, $Y$ and $Z$ are normal and $X\to Y$ faithfully flat. The assumptions imply that $\kk(X)$ is a regular extension of $\kk(Y)$, and we let $W$ be the normalisation of $Y$ in the relative algebraic closure $L$ of $\kk(Y)$ in $\kk(Z)$, which is verifiably  Galois. Then $X\times_YW$ is the normalisation of $X$ in $\kk(X)L$, and it suffices to check that $X\times_YW\to X$ is \'etale, which will subsequently imply that $W\to Y$ is finite \'etale Galois by faithfully flat descent, as required.

This is in fact a consequence of a more general principle stating that, given a tower $Z\to X'\to X$ of finite morphisms between normal connected schemes with $Z\to X$ \'etale and $Z\to X'$ surjective, the morphism $X'\to X$ is necessarily \'etale. Indeed, let us replace $Z$ with its Galois closure over $X$ and perform a base change of the whole situation via $Z\to X$.  Exploiting the fact that $Z\times_XZ\simeq Z\times G$, and restricting attention to its components, 
%and that $Z\to X'$ is surjective, and restricting attention to a component of $Z\times_XZ$, 
we can reduce to the situation where $Z\to X$ is an isomorphism. It follows that $X'\to X$ is a bijective finite morphism of normal schemes and thus an isomorphism.
\end{proof}

\subsection{Definition and basic properties of direct Galois covers}

\begin{definition}\label{def-dir-gal}
An \emph{almost direct Galois cover} is a $\sadir$-morphism  $p:(X,\Sigma)\to(Y,T)$ such that:
\begin{enumerate}
\item $X_i/Y_i$ is a Galois cover with group $G_i$, i=0,1;
\item $G_0$ acts on $\Sigma$ on the left so that $T\simeq G_0\backslash\Sigma$ via $\rlexp{()}{p}$.
\end{enumerate}
\end{definition}
%\item $\aut_{\sadir}((X,\Sigma)/(Y,\sigma))\simeq G_1$;
%A \emph{direct Galois cover} is an almost direct Galois cover in $\sdir$.

\begin{remark}\label{induced-gp-maps}
Using \ref{map-of-gal-gp}, we see that for the above data
\begin{enumerate}
\item there exists a homomorphism $\lexp{\pi_1}{()}:G_1\to G_0$ such that,  for $g_1\in G_1$,
$$\pi_1 g_1=\lexp{\pi_1}{g_1 }\,\pi_1;$$

\item\label{zvizda} for every $\sigma\in \Sigma$, there exists a homomorphism $\lexp{\sigma}{()}:G_1\to G_0$ such that, for $g_1\in G_1$, $$\sigma g_1=\lexp{\sigma}{g_1}\,\sigma.$$
\end{enumerate}
\end{remark}

If $(X,\Sigma)/(Y,\sigma)$ is an almost direct Galois cover, the \emph{almost direct Galois group} comprises the collection of data
$$
(G_1,G_0,\lexp{\pi_1}{()},\tilde{\Sigma}),
$$
where $\tilde{\Sigma}=\{\lexp{\sigma}{()}:\sigma\in\Sigma\}$. On the other hand, the following lemma shows that it is reasonable to informally say that the almost direct Galois group is simply $(G_1,\tilde{\Sigma})$.

\begin{lemma}
Let $p:(X,\Sigma)\to(Y,T)$ be an almost direct Galois cover. Then
\begin{enumerate}
\item\label{jen-dirgal} $\Aut_{\sadir}((X,\Sigma)/(Y,T))\simeq G_1$, and
\item\label{dva-dirgal} geometric fibres of $p$ are $G_1$-orbits.
%\item fibres of \sharp-points?
\end{enumerate}
\end{lemma}
\begin{proof}
For (\ref{jen-dirgal}), we need to show that every $g_1\in G_1$ induces a $\sadir$-automorphism of $(X,\Sigma)$. Writing $g_0=\rlexp{g_1}{\pi_1}$, the condition $\pi_1 g_1=g_0\pi_1$ already gives the first half of the relevant diagram. Now, for each $\sigma\in\Sigma$ and $g=(g_0,g_1)$, we define 
$$\lexp{g}{\sigma}=g_0\sigma g_1^{-1}\stackrel{\ref{induced-gp-maps}(\ref{zvizda})}=g_0\,\lexp{\sigma}{(g_1^{-1})}\sigma\in\Sigma.$$ 

By the commutativity of the diagram
$$
 \begin{tikzpicture}
[cross line/.style={preaction={draw=white, -,
line width=4pt}}]
\node (x1) at  (1.5,2) {$X_1$}; 	\node(x0s) at (3.5,2) {$X_0$};
\node (y1) at (0,1.2) {$X_1$};		\node(y0s) at (2.0,1.2) {$X_0$};	
\node (s1) at (1,0) {$Y_1$};		\node(s0s) at (3.0,0){$Y_0$};	
\path[->,font=\scriptsize,>=to, thin]
(x1) edge node[above,pos=0.5]{${\sigma}$} (x0s) 
	edge node[left,pos=0.3]{$g_1$} (y1)  edge node[right,pos=0.6]{$p_1$}  (s1)
(x0s) edge node[left,pos=0.3]{$g_0$} (y0s) edge node[right,pos=0.5]{$p_0$} (s0s)
(y0s) edge node[right,pos=0.2]{$p_0$} (s0s)
(y1)  edge [cross line] node[above,pos=0.5]{$\lexp{g}{\sigma}$} (y0s)  edge  node[left,pos=0.6]{$p_1$} (s1)
(s1) edge node[above]{$\lexp{p}{\sigma}$} (s0s) 
;
\end{tikzpicture}
$$ 
each $\lexp{g}{()}$ defines a map $\Sigma\to\Sigma$ and for every $\sigma\in\Sigma$, 
$$\lexp{g}{\sigma}\,g_1=g_0\sigma,$$ 
so $g_0$ and $g_1$ give rise to an automorphism $g$ of $(X,\Sigma)$.

To show (\ref{dva-dirgal}), let $y\in (Y,T)(F,\varphi)$ be a point with values in an algebraically closed difference field $(F,\varphi)$. Writing $\pi_1 y_1=y_0$  we have that $\tau y_1=y_0\varphi$ for some $\tau\in T$. Since $X_1/Y_1$ is finite (Galois), there exists a point $x_1\in X_1(F)$ such that $p_1(x_1)=y_1$. Let $x_0=\pi_1(x_1)$ so that $p_0(x_0)=y_0$. Let $\sigma\in\Sigma$ be such that $\rlexp{\sigma}{p}=\tau$ and consider the $\varsigma$-linear points $\sigma x_1$ and $x_0\varphi$ of $X_0$. Since $X_0/Y_0$ is Galois, and
$$
p_0\sigma x_1=\tau p_1 x_1=\tau y_1=y_0\varphi=p_0x_0\varphi,
$$
there exists a $g_0\in G_0$ with $g_0\sigma x_1=x_0\varphi$ and we conclude that $x\in X(F,\varphi)$.

Clearly for every $g\in G$, $gx$ also maps to $y$, so by part~(\ref{jen-dirgal}) we conclude that fibres of $X(F,\varphi)\to Y(F,\varphi)$ are $G_1$-orbits. Moreover, the fibres of $X^\sharp(F,\varphi)\to Y^\sharp(F,\varphi)$ are $\rlexp{(G_1)}{\pi_1}$-orbits.
\end{proof}

%\begin{remark}
%direct Galois cover vs categorical $\sadir$-quotients vs Galois covers of difference schemes.??????????????????????  refer to Appendix
%\end{remark}

We refer the reader interested in the comparison of direct Galois covers with Galois covers of difference schemes defined in \cite{ive-tch}
to Remark~\ref{dirgal-vs-gal}.

\begin{remark}\label{critdirgalcov}
Suppose we have a $\sadir$-morphism $(X,\tilde{\sigma})\to(Y,\sigma)$ such that $X_i/Y_i$ is a Galois cover %of integral schemes 
with group $G_i$, $i=0,1$. %, and that we have a homomorphism $()^{\pi_1}:G_1\to G_0$ such that $\pi_1 g_1=g_1^{\pi_1}\pi_1$, for $g_1\in G_1$. 
Let $\Sigma=G_0\tilde{\sigma}$. Then $(X,\Sigma)\to(Y,\sigma)$ is an almost direct Galois cover.
\end{remark}

\subsection{Local substitutions}

\begin{definition}
An object $(X,\Sigma)$ of $\sadir$ is \emph{faithful} if $\Sigma$ acts faithfully on geometric points of $X$ in the sense that, for every algebraically closed difference field $(F,\varphi)$, $\bar{x}\in X(F,\varphi)$, $\sigma,\sigma'\in\Sigma$, 
$\sigma \bar{x}_1=\sigma'\bar{x}_1$ implies $\sigma=\sigma'$.
\end{definition}

\begin{lemma}\label{faithf-lemma}
Suppose $(Y,T)$ is faithful, and that $p:(X,\Sigma)\to(Y,T)$ is a directly \'etale almost direct Galois cover. Then $(X,\Sigma)$ is also faithful.
\end{lemma}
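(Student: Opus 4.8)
The plan is to verify the faithfulness condition for $(X,\Sigma)$ straight from its definition, by transporting the given relation downward along $p$, invoking the faithfulness of $(Y,T)$ there, and then lifting the resulting information back up using the free action of the Galois group $G_0$ on the \'etale cover $X_0/Y_0$. Concretely, I would fix an algebraically closed difference field $(F,\varphi)$, a point $\bar{x}\in X(F,\varphi)$ and elements $\sigma,\sigma'\in\Sigma$ with $\sigma\bar{x}_1=\sigma'\bar{x}_1$, and aim to prove $\sigma=\sigma'$.

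First I would pass to the image point. Since $p$ is a $\sadir$-morphism it carries points to points, so $\bar{y}:=p(\bar{x})$ is a point of $(Y,T)(F,\varphi)$ with $\bar{y}_1=p_1\bar{x}_1$. The commutativity of the diagram defining the $\sadir$-morphism $p$ gives the identity $p_0\,\sigma=\sigma^p\,p_1$ of morphisms $X_1\to Y_0$, and likewise $p_0\,\sigma'=(\sigma')^p\,p_1$. Applying $p_0$ to the hypothesis $\sigma\bar{x}_1=\sigma'\bar{x}_1$ therefore yields
$$
\sigma^p\bar{y}_1=p_0\,\sigma\,\bar{x}_1=p_0\,\sigma'\,\bar{x}_1=(\sigma')^p\bar{y}_1.
$$
As $(Y,T)$ is faithful and $\bar{y}\in Y(F,\varphi)$, this forces $\sigma^p=(\sigma')^p$ in $T$.

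Next I would exploit the Galois structure. By Definition~\ref{def-dir-gal} we have $T\simeq G_0\backslash\Sigma$ via $()^p$, so the equality $\sigma^p=(\sigma')^p$ means precisely that $\sigma$ and $\sigma'$ lie in a single $G_0$-orbit; hence $\sigma'=g_0\sigma$ for some $g_0\in G_0$, where $G_0$ acts on $\Sigma$ by the composite $g_0\sigma$ with the Galois automorphisms of $X_0/Y_0$ (exactly as in the description of $\sadir$-automorphisms above). Evaluating at $\bar{x}_1$ and using $\sigma\bar{x}_1=\sigma'\bar{x}_1$ gives $g_0(\sigma\bar{x}_1)=\sigma'\bar{x}_1=\sigma\bar{x}_1$, so $g_0$ fixes the geometric point $\sigma\bar{x}_1\in X_0(F)$.

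The crucial — and really the only nontrivial — step is now to conclude $g_0=e$. This is exactly where the hypothesis that $p$ is directly \'etale is used: $X_0/Y_0$ is then a finite \'etale Galois cover, so $G_0$ acts freely on its geometric fibres, and a nontrivial element of $G_0$ can have no fixed geometric point. Thus $g_0=e$ and $\sigma'=\sigma$, as required. I expect this final point to be the main conceptual obstacle only in the sense that it pinpoints why \'etaleness is indispensable: over a ramified Galois cover a nonidentity automorphism may fix a point lying over the branch locus, so $g_0$ need not be trivial and faithfulness would genuinely fail; the \'etale hypothesis removes precisely this possibility. I would also note that the difference structure plays no essential role beyond guaranteeing that $\bar x$, and hence $\bar y$, is an honest $(F,\varphi)$-point, the argument itself being purely about the geometry of the \'etale fibres.
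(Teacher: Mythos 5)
Your proof is correct and follows the same route as the paper's: push the relation down via $p_0\sigma=\sigma^p p_1$, use faithfulness of $(Y,T)$ to get $\sigma^p=(\sigma')^p$, write $\sigma'=g_0\sigma$ using $T\simeq G_0\backslash\Sigma$, and conclude $g_0=1$ because $G_0$ acts freely on geometric fibres of the \'etale cover $X_0/Y_0$. Your added remarks (where \'etaleness is indispensable, and that the difference structure only guarantees $\bar y$ is an $(F,\varphi)$-point) are accurate elaborations of steps the paper leaves implicit.
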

\begin{proof}
Let $\bar{x}$ be a geometric point on $X$ with $p(\bar{x})=\bar{y}$ and suppose $\sigma\bar{x}_1=\sigma'\bar{x}_1$. Then 
$\rlexp{\sigma}{p}\bar{y}_1=\rlexp{\sigma'}{p}\bar{y}_1$, so the faithfulness of $Y$ implies that $\rlexp{\sigma}{p}=\rlexp{\sigma'}{p}$ and there is a $g_0\in G_0$ such that $\sigma'=g_0\sigma$, so the original relation can be rewritten as $g_0\sigma\bar{x}_1=\sigma\bar{x}_1$. Since $X_0/Y_0$ is \'etale, it follows that $g_0=1$.
\end{proof}

\begin{remark}
Using the previous lemma, if $(X,\Sigma)\to (Y,\sigma)$ is a directly \'etale Galois cover, then $(X,\Sigma)$ is automatically faithful.
\end{remark}

\begin{definition}\label{dir-loc-sub}
Let $(X,\Sigma)/(Y,T)$ be a directly \'etale Galois cover with group $(G,\tilde{\Sigma})$ and $(Y,T)$ faithful. Let $(F,\varphi)$ be an algebraically closed difference field and let
$x,x'\in X(F,\varphi)$, $y\in Y(F,\varphi)$ with $x,x'\mapsto y$. The \emph{local $\varphi$-substitution at $x$} is the unique (by~\ref{faithf-lemma}) $\varphi_x\in\Sigma$ such that $\varphi_x x_1=x_0\varphi$ (i.e., $\varphi_x=\varphi^x$). Since $X_1/Y_1$ is Galois, there exists a $g\in G$
such that $x'=gx$ and 
$$\rlexp{\varphi_x}{g}\,x_1'=\rlexp{\varphi_x}{g}\, g_1x_1=g_0\varphi_xx_1=g_0x_0\varphi=x_0'\varphi,$$
so we conclude that $\varphi_{x'}=\rlexp{\varphi_x}{g}$ and we can define the \emph{local $\varphi$-substitution at $y$} as the $G$-conjugacy class 
$\varphi_y$ of any $\varphi_x$ in $\Sigma$ with $x\mapsto y$.
\end{definition}

\begin{remark}
Suppose $(X,\Sigma)\to(Y,\sigma)$ is a directly \'etale almost direct Galois cover and let us fix a $\tilde{\sigma}\in\Sigma$ so that $\Sigma=G_0\tilde{\sigma}$. Given $x\in X(F,\varphi)$, we can consider the unique $\dot{\varphi}_x\in G_0$ such that $\varphi_x=\dot{\varphi}_x\tilde{\sigma}$, i.e., $\dot{\varphi}_x\tilde{\sigma}x_1=x_0\varphi$.  If $x,x'\mapsto y$, there is a $g\in G$ such that $x'=gx$ and
$$
\varphi_{x'}=\rlexp{\varphi_x}{g}=\rlexp{(\dot{\varphi}_x\tilde{\sigma})}{g}=g_0\dot{\varphi}_x\tilde{\sigma}g_1^{-1}=g_0\dot{\varphi}_x\rlexp{(g_1^{-1})}{\tilde{\sigma}}\tilde{\sigma},
$$
so we conclude that $\dot{\varphi}_{x'}=\rlexp{g_1}{\pi_1}\dot{\varphi}_x\rlexp{(g_1^{-1})}{\tilde{\sigma}}$, a $\rlexp{()}{\tilde{\sigma}}$-conjugate of $\dot{\varphi}_x$. It is therefore meaningful to define $\dot{\varphi}_y$ as the $(G,\rlexp{()}{\tilde{\sigma}})$-conjugacy class in $G_0$ of any $\dot{\varphi}_x$ with $x\mapsto y$.
\end{remark}

\subsection{Constructions of direct Galois covers}\label{ss:constr-dirgal}

\begin{proposition}[Pushforward of a direct Galois cover]\label{pfwd-submr}
Let $f:(X,\sigma)\to (Y,\sigma)$ be a morphism of directly integral almost direct presentations which is directly universally submersive with geometrically connected fibres, and let $(Z,\Sigma)\to(X,\sigma)$ be an almost direct Galois cover. 
For every $\tau\in\Sigma$, there is a diagram
$$
 \begin{tikzpicture}
[cross line/.style={preaction={draw=white, -,
line width=4pt}}]
\matrix(m)[matrix of math nodes, row sep=.09em, column sep=1em, text height=1.2ex, text depth=0.25ex]
{
|(a)|{Z_0}	&				& |(b)|{Z_1}	&			 	& |(e)|{Z_{0\varsigma}} 	&			\\[.6cm]
		&|(A)|{f_{0*}Z_0} 	&			& |(B)|{f_{1*}Z_1}	&					&|(E)|{f_{0\varsigma*}Z_{0\varsigma}}\\[.3cm]       
|(c)|{X_0}	&				& |(d)|{X_1} 	&				& |(f)|{X_{0\varsigma}} 	&			\\[.6cm]
		&|(C)|{Y_0} 		&			& |(D)|{Y_1}		&					&|(F)|{Y_{0\varsigma}}   \\};
\path[->,font=\scriptsize,>=to, thin,inner sep=1pt]
(b)edge node[pos=0.5,above]{$\pi_1$}(a)
(b)edge %node[pos=0.3, right%=-2pt]{$\tau^\sigma$}
(d) 
(b) edge node [pos=0.5,above]{$\pi_2(\tau)$} (e)
(a) edge %[dashed] %node[pos=0.5,left]{$(\tau^\sigma)^{(\rho^\sigma)}=(\tau^\rho)^\sigma$}
(c)
(d)edge node[pos=0.75,above%=-2pt
]{$\pi_1$}(c)
(d) edge node [pos=0.75,above]{$\pi_2(\sigma)$}  (f)
(e) edge (f) edge (E)
(f) edge node[above]{$f_{0\varsigma}$}(F)

(B)edge[dashed,cross line]  %node[pos=0.75,above]{$\pi_1$}
(A) edge[dashed,cross line]  %node[pos=0.25,above]{$\pi_2(\tau???)$} 
(E)
(B)edge[cross line]  %node[pos=0.5,right]{$\tau$}
(D)
(E) edge (F)
(A)edge[cross line] % node[pos=0.62,right%=-2pt]{$\tau^\rho$}
(C)
(D)edge[cross line]  node[pos=0.5,below]{$\pi_1$}(C) edge[cross line]  node[pos=0.5,below]{$\pi_2$} (F)
(a)edge node[pos=0.5,above right]{}(A)
(c)edge node[pos=0.5,above right]{$f_0$}(C)
(b)edge node[pos=0.5,above right]{}(B)
(d)edge node[pos=0.5,above right]{$f_1$}(D);
\end{tikzpicture}
$$ 
which makes $f_*Z=(f_{0*}Z_0,f_{1*}Z_1,{f_{0\varsigma*}Z_{0\varsigma}})$ into an almost direct Galois cover of $Y$.
\end{proposition}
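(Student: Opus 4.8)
The plan is to reduce the whole statement to the criterion \ref{critdirgalcov}, the only nontrivial inputs being the levelwise pushforwards supplied by \ref{dirimgal} and the construction of the structural morphisms between them. First I would apply \ref{dirimgal} separately to the three morphisms $f_0\colon X_0\to Y_0$, $f_1\colon X_1\to Y_1$ and $f_{0\varsigma}\colon X_{0\varsigma}\to Y_{0\varsigma}$, each of which is universally submersive with geometrically connected fibres between integral (hence connected) schemes, the last one because $f_{0\varsigma}$ is the base change of $f_0$ along $\varsigma$ and both hypotheses are stable under base change. This yields connected Galois covers $f_{0*}Z_0\to Y_0$, $f_{1*}Z_1\to Y_1$, $f_{0\varsigma*}Z_{0\varsigma}\to Y_{0\varsigma}$ with Galois groups $G_0$, $G_1$, $G_{0\varsigma}$; moreover, since the universal property defining $f_*$ is preserved by the base change $\varsigma$, I would identify $f_{0\varsigma*}Z_{0\varsigma}$ with $(f_{0*}Z_0)_\varsigma$ and hence $G_{0\varsigma}$ with $G_0$. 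As finite \'etale covers of normal integral schemes are again normal, and a connected normal scheme is integral, a direct localisation rendering $Y$ directly normal makes these connected covers integral, as required by \ref{critdirgalcov}.

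Next I would build the structural morphisms of $f_*Z$ from the universal (left adjoint) property. Fixing $\tau\in\Sigma$, the presentation $Z$ comes with $\pi_1^Z\colon Z_1\to Z_0$ and $\pi_2(\tau)\colon Z_1\to Z_{0\varsigma}$ lying over the corresponding maps of $X$. Composing $\pi_1^Z$ with the adjunction unit $Z_0\to f_0^*f_{0*}Z_0$ and using $f_0\pi_1^X=\pi_1^Y f_1$ gives, after the canonical identification $f_1^*(\pi_1^Y)^*\simeq(\pi_1^X)^*f_0^*$, a morphism $Z_1\to f_1^*\big((\pi_1^Y)^*f_{0*}Z_0\big)$ over $X_1$; the adjunction then produces $\pi_1\colon f_{1*}Z_1\to f_{0*}Z_0$ over $\pi_1^Y$. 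The same recipe applied to $\pi_2(\tau)$ yields a morphism $\pi_2(\tilde\tau)\colon f_{1*}Z_1\to f_{0\varsigma*}Z_{0\varsigma}$ over $\pi_2^Y(\sigma)$, which I take as the distinguished section $\tilde\tau$ of $f_*Z$. Together with the covering maps $p_i\colon f_{i*}Z_i\to Y_i$ these assemble, after checking the requisite commutativities, into a $\sadir$-morphism $(f_*Z,\tilde\tau)\to(Y,\sigma)$.

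It then remains to produce the homomorphism $()^{\pi_1}\colon G_1\to G_0$. For $g_1\in G_1=\Gal(f_{1*}Z_1/Y_1)$, the two morphisms $\pi_1$ and $\pi_1 g_1$ from $f_{1*}Z_1$ to the Galois cover $f_{0*}Z_0$ agree after composition with $p_0$ (because $g_1$ is a $Y_1$-automorphism), so \ref{cor-to-bbk-lemma} supplies a unique $g_1^{\pi_1}\in G_0$ with $\pi_1 g_1=g_1^{\pi_1}\pi_1$; uniqueness makes $()^{\pi_1}$ a homomorphism. With the $\sadir$-morphism, the integral Galois covers $f_{i*}Z_i/Y_i$, and this homomorphism in hand, \ref{critdirgalcov} immediately yields that $(f_*Z,G_0\tilde\tau)\to(Y,\sigma)$ is an almost direct Galois cover, which is the assertion.

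The main obstacle is the middle step: unlike $f$, the structure maps $\pi_1^X,\pi_2(\sigma)$ are not submersive, so $f_*$ is not functorial along them and the morphisms between the pushforwards cannot be obtained by naturality alone. One must instead extract them from the adjunction, and the delicate points are (i) the Beck--Chevalley type compatibility $f_1^*(\pi_1^Y)^*\simeq(\pi_1^X)^*f_0^*$ that lets the unit of $f_{0*}$ feed into the counit of $f_{1*}$, and (ii) the possible disconnectedness of the base change $(\pi_1^Y)^*f_{0*}Z_0$, which forces one to apply the adjunction after passing to a connected Galois cover dominating it and then to project back. Verifying that all squares of the displayed diagram commute, and that the bookkeeping $\tau\mapsto\tilde\tau$ is consistent with the $G_0$-actions, is the bulk of the routine work once these structural morphisms are in place.
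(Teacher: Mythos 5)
Your proposal is correct and shares the paper's skeleton --- levelwise pushforwards via \ref{dirimgal}, dashed structural morphisms extracted from the universal (maximality/adjunction) property of $f_*$, conclusion via \ref{critdirgalcov} --- but it takes a genuinely different route at the one nontrivial step, the homomorphism $()^{\pi_1}\colon\Gal(f_{1*}Z_1/Y_1)\to\Gal(f_{0*}Z_0/Y_0)$. The paper does not invoke \ref{cor-to-bbk-lemma} here: it observes that the commutative square over $V_i=f_i^*f_{i*}Z_i$ induces a homomorphism $\Gal(Z_1/f_1^*f_{1*}Z_1)\to\Gal(Z_0/f_0^*f_{0*}Z_0)$, i.e.\ that the given $()^{\pi_1}$ on $\Gal(Z_1/X_1)$ preserves the kernels of the two short exact sequences supplied by \ref{dirimgal}, and then simply descends it to the quotients. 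You instead rerun the Bourbaki argument directly on the pushforward covers ($\pi_1 g_1$ and $\pi_1$ agree after composing with $p_0$), with uniqueness holding because a nontrivial deck transformation of a connected \'etale cover acts freely on the nonempty image of $\pi_1$. Both mechanisms work. The paper's descent needs no integrality at that step and exhibits the new $()^{\pi_1}$ as the quotient of the old one, a compatibility silently used when conjugacy data is pushed forward ($C\mapsto f_*C$) in \ref{dirim-geom-conn}; your version recovers this only via an extra appeal to uniqueness. In exchange, your route forces the $f_{i*}Z_i$ to be integral, and you rightly secure this by a normalising localisation: that hypothesis is in fact needed by \ref{critdirgalcov} in either proof (a connected \'etale cover of a non-normal integral scheme can be reducible), and the paper leaves it implicit, \ref{pfwd-submr} being invoked in the proof of \ref{dirdirim-acfa} only after everything has been made normal, localisation being permitted throughout. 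Two minor remarks: your identification $f_{0\varsigma*}Z_{0\varsigma}\simeq(f_{0*}Z_0)_\varsigma$ is asserted too quickly (base change along $\varsigma$ could a priori disconnect a cover), but it is both repairable --- $(f_{0*}Z_0)_\varsigma$ is the image of the connected $Z_{0\varsigma}$, hence connected --- and dispensable, since your adjunction recipe produces $f_{1*}Z_1\to f_{0\varsigma*}Z_{0\varsigma}$ directly and \ref{critdirgalcov} only consumes the covers at levels $0$ and $1$ together with $()^{\pi_1}$; and your caveats (i)--(ii) are precisely the content the paper compresses into ``constructed using the universal property (maximality)'', so flagging them is appropriate rather than a gap.
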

\begin{proof}
While the solid arrows in the diagram come out directly from the assumptions, the dashed arrows are constructed using the universal property of direct images of Galois covers from \ref{dirimgal}. % and we finish by \ref{critdirgalcov}.

Indeed, let $V_1$ be (a component of) $f_{0*}Z_0\times_{Y_0}Y_1$, so that there is a morphism $Z_1\to V_1$. Since $V_1$ is an \'etale cover of $Y_1$, by maximality of $f_{1*}Z_1$, there is a morphism $f_{1*}Z_1\to V_1$, and we take the composite with the natural morphism $V_1\to  f_{0*}Z_0$.

Similarly, for each $\tau\in\Sigma$ we obtain a morphism $f_{1*}Z_1\to f_{0\varsigma*}Z_{0\varsigma}$, as required.
\end{proof}

\begin{proposition}[Direct Galois closure]\label{dirgalcl}
Let $(X,\sigma)\to (Y,\sigma)$ be a directly finite \'etale morphism in $\sadir$ between directly integral objects. There exists an object $(\tilde{X},\tilde{\Sigma})$ in $\sadir$, an inclusion $\iota:\mathring{\Sigma}\hookrightarrow\tilde{\Sigma}$ inducing a morphism $\mathring{X}=(\tilde{X},\mathring{\Sigma})\to(\tilde{X},\tilde{\Sigma})$ as in the diagram
$$
\begin{tikzpicture} 
\matrix(m)[matrix of math nodes, row sep=0.1em, column sep=1.5em,text height=1.3ex, text depth=0.25ex]
{       
 |(u1)|{\mathring{X}}		&[1.4em] |(u2)|{\tilde{X}}	\\[1.5em]
 |(d1)|{X} 					& 								\\
 								& |(b)|{Y}            					\\}; 
\path[->,font=\scriptsize,>=to, thin]
(u1) edge  (u2) 
(u1) edge %[dashed]  
(d1)
(d1) edge (b)
(u2) edge %[dashed] 
(b);
\end{tikzpicture}
$$ 
where the vertical arrows are almost direct Galois covers, which is minimal in the sense that any other almost direct Galois cover that fits into an analogous diagram directly dominates $\tilde{X}$.

%makes $\mathring{\tilde{X}}\to X$ and $\tilde{X}\to Y$ into almost direct Galois covers.
\end{proposition}
The above diagram is called the \emph{almost direct Galois closure} of $(X,\sigma)\to(Y,\sigma)$. Note that this is consistent with the notion of Galois closure in difference algebraic geometry \cite{ive-tgs}.

\begin{proof}
Let $\tilde{X}_0$ be the Galois closure of $X_0$ over $Y_0$. The fibre product
$$
\bar{X}_1=X_1\times_{Y_0\times Y_0}\tilde{X}_0\times\tilde{X}_0
$$
is an \'etale cover of $Y_1$ %and therefore an \'etale cover of $Y_1$, 
with a transitive action of $\Gal(\tilde{X}_0/Y_0)\times\Gal(\tilde{X}_0/Y_0)$  on its connected components $C_1,\ldots,C_r$.

%Let $\
%and let $Z$ be a least  connected \'etale cover of $Y$ dominating all of its components ($Z$ is constructed as a connected component of the fibre product of the components of $X_1'$ over $Y$). 
We let $\tilde{X}_1$ be the Galois closure of $C_1$ over $Y_1$, so we obtain a correspondence 
$\tilde{X}_0\stackrel{\pi_1}{\leftarrow}\tilde{X}_1\stackrel{\tilde{\sigma}}{\rightarrow}\tilde{X}_0$ and a $\sadir$-morphism $(\tilde{X},\tilde{\sigma})\to(X,\sigma)$.

%Most of the construction is already described in the statement. Since $\Gal(\tilde{X}_1/Y_1)$ is in fact a decomposition subgroup of $\tilde{X}_1$ in the Galois cover $$(\tilde{X}_0\times\tilde{X}_{0\varsigma})\times_{X_0\times X_{0\varsigma}}X_1\to X_1$$ with group
%$$\Gal(\tilde{X}_0\times\tilde{X}_{0\varsigma}/Y_0\times Y_{0\varsigma})\simeq\Gal(\tilde{X}_0/Y_0)\times\Gal(\tilde{X}_{0\varsigma}/Y_{0\varsigma}),$$ 
%the first projection gives a suitable homomorphism 
%$\Gal(\tilde{X}_1/Y_1)\to \Gal(\tilde{X}_0/Y_0)$ and 

Writing $\mathring{\Sigma}=\Gal(\tilde{X}_0/X_0)\tilde{\sigma}$ and 
$\tilde{\Sigma}=\Gal(\tilde{X}_0/Y_0)\tilde{\sigma}$, 
Remark~\ref{critdirgalcov} shows that $(\tilde{X},\mathring{\Sigma})\to(X,\sigma)$ and $(\tilde{X},\tilde{\Sigma})\to(Y,\sigma)$ are almost direct Galois covers. 

Suppose $(Z,\Sigma_Z)$ is an almost direct Galois cover of $(Y,\sigma)$ such that for some $\tilde{\sigma}\in\Sigma_Z$, we have a morphism $(Z,\tilde{\sigma})\to(X,\sigma)$. Since $\tilde{X}_0$ is a Galois closure of $X_0$ over $Y_0$, we have a morphism $Z_0\to\tilde{X}_0$ and, by the universal property of fibre products, we get a morphism $Z_1\to \bar{X}_1$, where $Z_1$ actually lands onto a component $C_j$ of $\bar{X}_1$. By transitivity of the Galois action on the components, there is a morphism $g_j: C_j\to C_1$, so we get a composite \'etale cover $Z_1\to C_j\to  C_1$, whence a morphism $Z_1\to\tilde{X}_1$. We leave the verification that everything commutes to the reader.  
\end{proof}

\section{Galois formulae and first-order formulae}\label{s:gal-fmla}

\subsection{Direct Galois stratifications and direct Galois formulae}

\begin{definition} 
Let $(R,\varsigma)$ be a difference domain,  and let $(X,\sigma)$ be a direct variety in $\sadir_{(R,\varsigma)}$.
A \emph{normal almost direct Galois stratification} 
$$ 
\cA=\langle X, Z_i/X_i, C_i\, |\, i\in I\rangle
$$
of $(X,\sigma)$ over $(R,\varsigma)$ is a partition of $(X,\sigma)$
into a finite set of directly integral normal locally closed  subvarieties %$\sadir$-subobjects 
$(X_i,\sigma)$ of $(X,\sigma)$,
each equipped with a directly connected almost direct Galois covering $(Z_i,\Sigma_i)/(X_i,\sigma)$ with group 
$(G_i,\tilde{\Sigma}_i)$, and   
$C_i$ is a $G_i$-conjugacy domain in $\Sigma_i$. %, as in 
%\cite[Section~\ref{sect:galois}]{ive-tch}.
%union of conjugacy classes in $\Sigma_i$.
\end{definition}
 
\begin{definition}
Let $\cA$ be an almost direct Galois stratification on $(X,\sigma)$ over $(R,\varsigma)$. Then $\cA$ defines a `point set' subassignment $\cA^\flat$ of 
$X^\flat$ as follows. 
For a point $s\in\spec^\varsigma(R)$ and
an algebraically closed
 difference field $(F,\varphi)$ extending $(\kk(s),\sigma^s)$, 
% and a $(K,\varphi)$-valued point $s$ in $(S,\sigma)$, 
$$
\cA^\flat(s,(F,\varphi))=\cA_s(F,\varphi)=\bigcup_i\{x\in X_{i,s}(F,\varphi)\,\, |\,\, \varphi^{Z_i/X_i}_x\subseteq C_i\},
$$
where $\varphi^{Z_i/X_i}_x$ denotes the local $\varphi$-substitution at $x$, as defined in \ref{dir-loc-sub}.

The \emph{almost direct Galois formula} over $(R,\varsigma)$ associated with %the above stratification 
$\cA$ is defined as the `realisation' subassignment $\cA^\sharp$ of $X^\sharp$ by the rule
%Given a point $s\in S$ and
%an algebraically closed
% difference field $(F,\varphi)$ extending $(\kk(s),\sigma^s)$, 
%% and a $(K,\varphi)$-valued point $s$ in $(S,\sigma)$, 
$$
\cA^\sharp(s,(F,\varphi))=\cA_s^\sharp(F,\varphi)=\{x_0\in X^\sharp(F,\varphi):x\in\cA_s(F,\varphi)\},
%\bigcup_i\{x_0\in X_{i,s}^\sharp(F,\varphi)\,\, |\,\, \varphi^{Z_i/X_i}_x\subseteq C_i\}.
$$
%where $\varphi^{Z_i/X_i}_x$ denotes the local $\varphi$-substitution at $x$, as defined in \ref{dir-loc-sub}.
so that we can think of $\cA_s^\sharp(F,\varphi)$ as the projection along $\pi_1$ of $\cA_s(F,\varphi)$.
%\pi_{1,s}\{x_1: x\in\cA_s(F,\varphi)\}=\{x_0:x\in\cA_s(F,\varphi)\}$.
\end{definition}

\begin{notation}
In informal discussion we may omit the word `almost' from the above terms and refer simply to \emph{direct Galois stratifications} and \emph{direct Galois formulae}. 
\end{notation}

\begin{remark}
If we fix a lift $\sigma_i\in\Sigma_i$ of $\sigma$ for each $i$, the
above data is equivalent to fixing for each $i$ 
 a $\rlexp{()}{\sigma_i}$-conjugacy domain $\dot{C}_i$ in $G_i$, i.e., a union of $\rlexp{()}{\sigma_i}$-conjugacy classes in  $G_i$. Clearly,
 $$
 \cA_s^\sharp(F,\varphi)=\bigcup_i\{x_0\in X_{i,s}^\sharp(F,\varphi)\,\, |\,\, \dot{\varphi}_{x}^{Z_i/X_i}\subseteq \dot{C}_i\}.
$$ 
\end{remark}

\begin{definition}\label{inflation-refinement}
Let $(X,\sigma)$ be a direct variety in $\sadir_{(R,\varsigma)}$ and let
$\cA=\langle X,Z_i/X_i,C_i\rangle$ be an almost direct Galois stratification
on $X$.
\begin{enumerate}[wide]
\item Suppose that for each $i$ we have an almost direct  
covering $(Z_i',\Sigma_i')/(X_i,\sigma)$ which dominates 
$(Z_i,\Sigma_i)/(X_i,\sigma)$. Let $\pi_i:\Sigma_i'\to\Sigma_i$ denote
the associated surjective map. The \emph{inflation} of $\cA$ is defined
as 
$$\cA'=\langle X,Z'_i/X_i,\pi_i^{-1}(C_i)\rangle.$$
It has the property that for every $s\in S$, and every algebraically closed
$(F,\varphi)$ extending $(\kk(s),\sigma^s)$,
$$
\cA'_s(F,\varphi)=\cA_s(F,\varphi).
$$
\item Suppose that we have a further stratification of $X_i$ into finitely many
directly integral normal locally closed %$(k,\sigma)$-difference 
subschemes $X_{ij}$.
For each $i,j$, let $Z_{ij}$ be a direct component of $Z_{i}\times_{X_i}X_{ij}$, and 
let $$D(Z_{ij,1})=\{g\in\Aut(X_{ij,1}\times_{X_{i,1}}Z_{i,1}/X_{ij,1})\simeq\Gal(Z_{i,1}/X_{i,1}):g(Z_{ij,1})=g(Z_{ij,1})\}$$ be the decomposition group 
of $Z_{ij,1}$. Since $\pi_1(Z_{ij,1})\subseteq Z_{ij,0}$, it follows that $\rlexp{()}{\pi_1}$ takes $D(Z_{ij,1})$ into $D(Z_{ij,0})$.
Consider $\Sigma_{ij}=\{\tau\in\Sigma_i:\tau(Z_{ij,1})\subseteq Z_{ij,0}\}$ (which is non-empty since $Z_i/X_i$ is Galois) and the inclusion
$\iota_{ij}:\Sigma_{ij}\hookrightarrow\Sigma_i$.  It can be verified, by \ref{critdirgalcov} for example, that each $(Z_{ij},\Sigma_{ij})/(X_{ij},\sigma)$ is a Galois cover with group $D(Z_{ij,1})$.

%let $\p_{ij}\in\spec^\sigma(A)$ be the ideal corresponding to the generic point of 
%$X_{ij}$ and let $\q_{ij}$
%be an element of $\spec^\Sigma(C_i)$ extending $\p_{ij}$.
%Let $G_{ij}$ be the stabiliser of $\q_{ij}$ in $G$, let 
%$\Sigma_{ij}=\{\sigma\in\Sigma_i:\sigma^{-1}(\q_{ij})=\q_{ij}\}$, and write
%$\iota_{ij}:\Sigma_{ij}\hookrightarrow\Sigma_i$, $C_{ij}=C_i/\q_{ij}$.
%Then each $(C_{ij},\Sigma_{ij})/(X_{ij},\sigma)$ is a ring/scheme covering
%with group $(G_{ij},\tilde{\Sigma}_{ij})$.

The \emph{refinement} of $\cA$ associated to the above data is defined
as
$$\cA'=\langle X,Z_{ij}/X_{ij},\iota_{ij}^{-1}(C_i)\rangle.$$
It has the property that for every $s\in S$, and every algebraically closed
$(F,\varphi)$ extending $(\kk(s),\sigma^s)$,
$$
\cA'_s(F,\varphi)=\cA_s(F,\varphi).
$$
%\item With notation of \cite[\ref{def:pullbk}]{ive-tch}, the \emph{pullback} $f^*\cB$ of $\cB$ with respect to
%$f$ is defined
%as a refinement of
%$$\langle X,Z_{j}/X_{j},\iota_{j}^{-1}(D_j)\rangle$$
%to a normal refinement of the stratification $X_j$ of $X$. 
%It has the property that for every $s\in S$, and every algebraically closed
%$(F,\varphi)$ extending $(\kk(s),\sigma^s)$,
%$$
%f^*\cB_s(F,\varphi)=f_s^{-1}(\cB_s(F,\varphi)).
%$$
\end{enumerate}
\end{definition}

\begin{definition} 
Let $(X,\sigma)$ be a direct variety in $\sadir_{(R,\varsigma)}$.
The class of  $(R,\varsigma)$-Galois formulae on $X$ has a \emph{Boolean algebra} 
structure as follows. 
\begin{enumerate}
\item $\bot_X=\langle X,X/X,\emptyset\rangle$, $\top_X=\langle X,X/X,\{\sigma\}\/\rangle$.
\end{enumerate}
For Galois formulae on $X$ given by $\cA$ and $\cB$, upon a refinement
and an inflation we may assume that $\cA=\langle X,Z_i/X_i,C_i\rangle$ and
$\cB=\langle X,Z_i/X_i,D_i\rangle$, with $C_i,D_i\subseteq\Sigma_i$.
\begin{enumerate}[resume]
\item $\cA\land\cB=\langle X,Z_i/X_i,C_i\cap D_i\rangle$.
\item $\cA\lor\cB=\langle X,Z_i/X_i,C_i\cup D_i\rangle$.
\item $\lnot\cA=\langle X,Z_i/X_i,\Sigma_i\setminus C_i\rangle$.
\end{enumerate}
\end{definition}

\subsection{First-order formulae}

\begin{definition}
Let $(R,\varsigma)$ be a transformal domain.
\begin{enumerate}[wide]
\item
A \emph{first-order formula 
over $(R,\varsigma)$} is 
a first-order formula $\theta(x_1,\ldots,x_n;a_1,\ldots,a_m)$ 
in the language of difference rings with free variables
$x_1,\ldots,x_n$ and parameters $a_1,\ldots,a_m$ from $R$.

\item An $(R,\varsigma)$-formula $\theta(x_1,\ldots,x_n;a_1,\ldots,a_m)$ 
gives rise to
a subassignment $\theta^\sharp$ of $\Af^{n\sharp}_{(R,\varsigma)}$ by the following 
procedure. For any  $s\in\spec^\varsigma(R)$ and any
difference
field $(F,\varphi)$ extending $(\kk(s),\varsigma^s)$, writing $\bar{s}$ for the composite $(R,\varsigma)\to (\kk(s),\varsigma^s)\to(F,\varphi)$,
the value of
$$\theta^\sharp(s,(F,\varphi))=\theta_s(F,\varphi)$$ is the set of realisations of the formula $\theta(x_1,\ldots,x_n,\bar{s}(a_1),\ldots,\bar{s}(a_m))$ in $(F,\varphi)$. %it is a subset of $F^n$).

\item An $(R,\varsigma)$-subassignment $\cF$ of $\Af^n_R$ is called \emph{definable} if
there is a first-order formula $\theta(x_1,\ldots,x_n)$ over $(R,\sigma)$
such that $\cF=\theta^\sharp$.
\end{enumerate}
\end{definition}

\subsection{Existentially closed difference fields}

It is known (\cite{angus}, \cite{zoe-udi}) that
the first-order theory of difference fields has a model-companion called ACFA, which axiomatises existentially closed difference fields.

An axiom scheme for ACFA is obtained by a first-order transliteration of the following statement (the crucial statement is known as `axiom H').
\begin{fact}\label{axiom-H}
An algebraically closed inversive difference field $(F,\varphi)$ is existentially closed if and only if
%\begin{quote}
every H-direct $(X,\sigma)$ in $\sdir_{(F,\varphi)}$ satisfies $X^\sharp(F,\varphi)\neq\emptyset$. 
%\end{quote}
\end{fact}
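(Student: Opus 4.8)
The plan is to prove the two implications separately; the statement is the difference-algebraic incarnation of `axiom H' of Chatzidakis--Hrushovski, and throughout $(F,\varphi)$ is algebraically closed and inversive (so $\varphi$ is an automorphism).

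For the forward implication, suppose $(F,\varphi)$ is existentially closed and let $(X,\sigma)\in\sdir_{(F,\varphi)}$ be $H$-direct, so that $X_1$ is integral and the projections $\pi_1:X_1\to X_0$ and $\pi_2(\sigma):X_1\to X_{0\varphi}$ are dominant. First I would produce a difference field extension realising a point. The generic point of $X_1$ projects, by dominance, to the generic points $a$ of $X_0$ and $b$ of $X_{0\varphi}$, and since $X_{0\varphi}$ is the base change of $X_0$ along $\varphi$ we may identify $\kk(X_{0\varphi})\cong\kk(X_0)^\varphi$; the algebraic relations satisfied by $a$ over $F$ therefore map under $\varphi$ to those satisfied by $b$. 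Hence $\varphi$ extends to $\kk(X_0)$ by $a\mapsto b$, and then to an automorphism $\varphi'$ of a large algebraically closed field, yielding $(F',\varphi')\supseteq(F,\varphi)$ with $X^\sharp(F',\varphi')\neq\emptyset$. Finally, membership in $X^\sharp$ is expressed by the existential formula $\exists x\,(x,\varphi x)\in X_1$ with parameters from $F$, so existential closedness transfers the solution back and gives $X^\sharp(F,\varphi)\neq\emptyset$.

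For the converse, suppose that every $H$-direct object of $\sdir_{(F,\varphi)}$ has a nonempty realisation set. It suffices to realise in $(F,\varphi)$ every quantifier-free formula $\theta(x)$ over $F$ possessing a solution $a$ in some extension $(F',\varphi')$, since an existential formula true in an extension reduces to this case. Introducing variables $x^{(i)}$ for $\varphi^i x$ together with the equations $\varphi x^{(i)}=x^{(i+1)}$ lowers the order, so I may assume $\theta$ involves only $x$ and $\varphi x$; after a $\varphi$-localisation absorbing the inequations of $\theta$ into the ambient varieties, $\theta$ determines an object $(X,\sigma)\in\sdir_{(F,\varphi)}$ whose realisations $X^\sharp$ are exactly the solutions of $\theta$. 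The extension solution gives $a\in X^\sharp(F',\varphi')$, so this object is nonempty. Now I would invoke the direct decomposition \ref{direct-decomp}, which writes $X^\sharp=\bigcup_i X_i^\sharp$ with each $(X_i,\sigma)$ an $H$-direct object defined over $F$ (recall $F_{-n}=F$ since $\varphi$ is onto). Nonemptiness of $X^\sharp(F',\varphi')$ forces the index set to be nonempty, and the realisation hypothesis yields $X_i^\sharp(F,\varphi)\neq\emptyset$ for each such $i$; hence $X^\sharp(F,\varphi)\neq\emptyset$ and $\theta$ is realised in $(F,\varphi)$.

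The main obstacle is concentrated in the converse: the reductions that bring an arbitrary quantifier-free formula into the $H$-direct form --- clearing higher powers of $\varphi$ and rewriting inequations as $\varphi$-localisations so as to remain within $\sdir$ --- must be carried out so that the resulting object is genuinely a direct presentation and so that \ref{direct-decomp} applies over the extension $(F',\varphi')$ as well as over $F$. By contrast, the extension-building step in the forward direction is routine difference algebra, the only point requiring care being that $\varphi'$ remains an automorphism while preserving the genericity of the chosen point.
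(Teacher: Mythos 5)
The paper offers no proof of this statement: it is quoted as a known Fact (Hrushovski's ``axiom H'', i.e.\ the Macintyre/Chatzidakis--Hrushovski axiomatisation of ACFA from \cite{angus}, \cite{zoe-udi}, \cite{udi}), so there is no internal argument to compare yours against. Your proof is essentially the standard one from that literature, and it is correct. In the forward direction, the key points are all present: a generic point $(a,b)$ of $X_1$ projects generically to $X_0$ and $X_{0\varsigma}$ by the two dominance conditions of H-directness, the identification $I(b/F)=\varphi(I(a/F))$ makes $a\mapsto b$ a legitimate extension of $\varphi$, and $X^\sharp(F,\varphi)\neq\emptyset$ is an existential sentence with parameters in $F$, so existential closedness pulls the witness back. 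In the converse, your use of \ref{direct-decomp} is exactly what makes the argument clean: since the decomposition $X^\sharp=\bigcup_i X_i^\sharp$ holds uniformly over \emph{every} difference field, nonemptiness over $(F',\varphi')$ forces the family to be nonempty, and then the hypothesis applies to any single H-direct piece over $(F,\varphi)$; your observation that $F_{-n}=F$ because $\varphi$ is onto is precisely why the pieces are defined over $F$ itself. Two routine points you gloss should be made explicit: first, before forming $(X,\sigma)$ you must pass to the single disjunct of the disjunctive normal form of $\theta$ realised by the extension solution (a disjunction does not define one object of $\sdir$); second, the ``$\varphi$-localisation'' absorbing an inequation $q\neq 0$ should be spelled out as adjoining a fresh coordinate $z$ with the equation $z\cdot q(y,\varphi y)=1$, which keeps $X_1$ a closed subvariety of $X_0\times X_{0\varsigma}$ (note $q$ may involve $\varphi y$, so the relation lives on $X_1$, not $X_0$) --- and this is harmless precisely because you then let \ref{direct-decomp} trim the possibly non-H-direct result. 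With those two details written out, the argument is complete.
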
 

The following is a uniform variant, obtained by using \ref{loc-improvement} to make all fibres geometrically H-direct, and subsequently applying  axiom H.

\begin{corollary}
Let $(R,\varsigma)$ be a transformal domain, and let $(X,\sigma)$ be a direct variety in $\sdir_{(R,\varsigma)}$ whose generic fibre over  $R$ is  geometrically H-direct. Then there exists a $\varsigma$-localisation $R'$ of $R$ such that, for every $s\in\spec^\varsigma(R')$ and every existentially closed $(F,\varphi)$ extending $(\kk(s),\varphi^s)$, 
$$
X_s^\sharp(F,\varphi)\neq\emptyset.
$$
\end{corollary}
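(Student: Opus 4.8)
The plan is to produce a $\varsigma$-localisation $R'$ over which \emph{every} fibre $(X_s,\sigma^s)$ is already geometrically H-direct, and then to invoke axiom~H (\ref{axiom-H}) fibre by fibre. Since H-directness presupposes direct integrality, I would first observe that the hypothesis on the geometric generic fibre forces each of $X_0$, $X_1$ and $X_{0\varsigma}$ to possess a unique irreducible component dominating $S=\spec(R)$; the remaining components lie over a proper closed subset of $S$, so after a base localisation we may discard them and assume that $(X,\sigma)$ is directly integral and that its structure morphism $f\colon(X,\sigma)\to\spec(R,\varsigma)$ is a morphism of directly integral objects whose generic fibre is directly geometrically integral with dominant projections $\pi_1$ and $\pi_2(\sigma)$.

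The heart of the matter is a spreading-out argument. Applying \ref{loc-improvement}(1) to $f$ produces a direct (hence $\varsigma$-) localisation of the base over which $f$ is directly universally submersive, in particular directly flat, with geometrically integral fibres; thus each $X_{0,s}$, $X_{1,s}$ and $X_{0\varsigma,s}$ is geometrically integral. It then remains to propagate the dominance of the two internal projections to every fibre. Here I would argue that, the fibres being now flat and geometrically integral of constant relative dimension, the set of $s$ for which $\pi_{1,s}\colon X_{1,s}\to X_{0,s}$ fails to be dominant is constructible (by Chevalley's theorem and the upper semicontinuity of fibre dimension, i.e.\ exactly the constructibility input already used in \ref{loc-submer}) and avoids the generic point of $S$; the same applies to $\pi_2(\sigma)$. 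Intersecting the resulting dense opens, choosing a single nonzero $f\in R$ whose non-vanishing locus is contained in this good open set, and inverting all $\varsigma$-translates $\varsigma^n(f)$ to obtain a genuine $\varsigma$-localisation $R'$, we arrange that for every $s\in\spec^\varsigma(R')$ the fibre $(X_s,\sigma^s)$ is geometrically H-direct.

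Finally, fix $s\in\spec^\varsigma(R')$ and an existentially closed $(F,\varphi)$ extending $(\kk(s),\varsigma^s)$. As $(F,\varphi)$ is in particular algebraically closed and inversive, base change to $F$ preserves integrality (this is precisely what geometric integrality buys us) and dominance, so $(X_s\times_{\kk(s)}F,\sigma)$ is H-direct in $\sdir_{(F,\varphi)}$. Axiom~H (\ref{axiom-H}) then yields $(X_s\times_{\kk(s)}F)^\sharp(F,\varphi)\neq\emptyset$, and since $F$-points, and hence realisations, are unchanged under this base change, $X_s^\sharp(F,\varphi)\neq\emptyset$, as required. The step I expect to be the main obstacle is the fibrewise dominance: \ref{loc-improvement}(1) delivers integral fibres and flatness outright, but upgrading dominance of the total morphisms $X_1\to X_0$ and $X_1\to X_{0\varsigma}$ to dominance of $\pi_{1,s}$ and $\pi_2(\sigma)_s$ on \emph{every} fibre needs the separate constructibility–semicontinuity argument, together with the care required to keep the final localisation genuinely $\varsigma$-invariant.
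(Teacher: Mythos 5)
Your proposal is correct and takes essentially the same route as the paper, which proves this corollary in a single sentence: use \ref{loc-improvement} to make all fibres geometrically H-direct over a suitable $\varsigma$-localisation, then apply axiom H (\ref{axiom-H}) fibre by fibre. Your constructibility--semicontinuity argument for the fibrewise dominance of $\pi_1$ and $\pi_2(\sigma)$, and the device of inverting all $\varsigma$-translates $\varsigma^n(f)$ to keep the localisation genuinely $\varsigma$-invariant, are precisely the details the paper leaves implicit in that sentence.
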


\subsection{Fields with powers of Frobenius} 
\begin{notation}
In the sequel, $p$ denotes a rational prime, and $q$ is a power of $p$. 

If $q=p^n$, the map
$$
\varphi_q:\bar{\F}_p\to\bar{\F}_p, \ \ \ \varphi_q(\alpha)=\alpha^q
$$ 
is the $n$-th power of the Frobenius automorphism on the algebraic closure of $\F_p$.

If $k$ is a finite field, we may also write 
$$\varphi_k=\varphi_{|k|}:\bar{k}\to\bar{k}.$$
\end{notation}

In \cite{udi}, Hrushovski proves that ACFA is in fact the elementary theory of difference fields $(\bar{\F}_p,\varphi_{q})$. The crucial ingredient is the following consequence of his twisted Lang-Weil estimate.

\begin{fact}[\cite{udi}]\label{twist-LW}
Let $(R,\varsigma)$ be a transformal domain of finite $\varsigma$-type over $\Z$, and let $(X,\sigma)$ be 
a direct variety in $\sdir_{(R,\varsigma)}$ whose generic fibre over $R$ is geometrically H-direct. 
Then there exists a $\varsigma$-localisation $R'$ of $R$ and an integer $N>0$ such that for every $s\in\spec^\varsigma(R')$ and every field $(\bar{\F}_p,\varphi_q)$ extending $(\kk(s),\varphi^s)$ with $q\geq N$, 
$$
X_s^\sharp(\bar{\F}_p,\varphi_q)\neq\emptyset.
$$ 
\end{fact}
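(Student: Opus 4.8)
The plan is to mimic the proof of the Corollary above, replacing the appeal to axiom~H by Hrushovski's twisted Lang--Weil estimate from \cite{udi}. First I would spread the geometric H-directness of the generic fibre to all fibres. Applying \ref{loc-improvement}(1) to the structure morphism of $(X,\sigma)$ over $(R,\varsigma)$ --- whose generic fibre is geometrically H-direct, hence a map of directly integral objects with directly generically integral fibres --- produces a $\varsigma$-localisation $R'$ of $R$ over which the morphism is directly universally submersive with geometrically integral fibres. Since the dominance of $\pi_1$ and $\pi_2$ is a constructible condition that likewise holds on a dense open, after possibly shrinking $R'$ the fibre $X_s$ is geometrically H-direct for \emph{every} $s\in\spec^\varsigma(R')$. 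It is here that the finite $\varsigma$-type of $(R,\varsigma)$ over $\Z$ enters: it forces the whole family to be of finite type, so that these constructible loci are dense open and a single $\varsigma$-localisation captures them.

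Next I would unwind \ref{dirpts} in the $\vadir$ picture: a realisation in $X_s^\sharp(\bar{\F}_p,\varphi_q)$ is exactly a point $a\in X_{0,s}(\bar{\F}_p)$ admitting a lift $x_1\in X_{1,s}$ with $\pi_2(\sigma)x_1=\varphi_q(\pi_1 x_1)$, so that $X_s^\sharp(\bar{\F}_p,\varphi_q)$ is the $\pi_1$-projection of the set of $\varphi_q$-twisted fixed points of the correspondence $X_{1,s}\subseteq X_{0,s}\times X_{0\varsigma,s}$. This is precisely the quantity governed by Hrushovski's twisted Lang--Weil estimate, and geometric H-directness (geometric integrality of $X_{1,s}$ together with dominance of both projections) is exactly the hypothesis forcing a strictly positive main term, $|X_s^\sharp(\bar{\F}_p,\varphi_q)|=\mu_s\,q^{d_s}+O(q^{d_s-1/2})$ with $\mu_s>0$ and $d_s\ge 0$. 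For each fixed $s$ this already yields $X_s^\sharp(\bar{\F}_p,\varphi_q)\neq\emptyset$ once $q$ exceeds a threshold depending on $s$.

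The main obstacle is \emph{uniformity}: a single $N$ must serve all $s$ simultaneously. This is again what finite $\varsigma$-type provides, since it bounds the geometric complexity (degrees, numbers of geometric components, sizes of the defining data) of the fibres $X_s$ uniformly over $\spec^\varsigma(R')$; Hrushovski establishes his estimate in exactly this relative form, with the leading coefficient $\mu_s$ bounded away from $0$ and the implied error constant bounded above, both independently of $s$. Taking $N$ past the resulting common threshold completes the proof. A minor point, also handled in \cite{udi} by spreading out over $\Z$ and restricting to closed points, is to ensure that each $s$ genuinely lifts to a point over $(\bar{\F}_p,\varphi_q)$, i.e.\ that the induced difference structure $\varsigma^s$ on $\kk(s)$ agrees with a power of Frobenius compatible with $\varphi_q$; this is what makes the substitution $s\mapsto(\bar{\F}_p,\varphi_q)$ legitimate.
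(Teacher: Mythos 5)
Your reconstruction is sound and matches the paper's intent: the paper does not prove \ref{twist-LW} at all, stating it as a Fact imported from \cite{udi}, and its own one-line gloss of the analogous ACFA corollary --- use \ref{loc-improvement} to make all fibres geometrically H-direct after a $\varsigma$-localisation, then invoke the relevant point-existence input (axiom~H there, the uniform twisted Lang--Weil estimate here) --- is exactly the spread-out-then-count argument you give, including the uniformity in $s$ coming from the relative form of Hrushovski's estimate over a base of finite $\varsigma$-type. No gaps worth flagging at this level of detail.
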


\subsection{Equivalent subassignments and theories}

\begin{definition}
Let $(R,\varsigma)$ be a %normal 
transformal domain and let $(X,\sigma)$ be an object of $\sadir_{(R,\varsigma)}$.
Let $\cF$ and $\cF'$ be $(R,\varsigma)$-subassignments of $X$ or $X^\sharp$.
\begin{enumerate}[wide]
\item We shall say that $\cF$ and $\cF'$ are \emph{equivalent} over $(R,\varsigma)$ and write 
$$
\cF\equiv_{(R,\varsigma)}\cF',
$$
if for every closed $s\in\spec^\varsigma(R)$, every algebraically closed difference field $(F,\varphi)$ extending $(\kk(s),\sigma^s)$, 
$$
\cF(s,(F,\varphi))=\cF'(s,(F,\varphi)).
$$

\item We shall write 
$$
\cF\equiv_{(R,\varsigma)}^{\acfa}\cF',
$$
if the above holds when $(F,\varphi)$ ranges over suitable existentially closed difference fields. Additionally, we write
$$
\cF\equiv_{(R,\varsigma),\gen}^{\acfa}\cF',
$$
if $\cF\equiv_{(R',\varsigma)}^{\acfa}\cF'$ for some finite $\varsigma$-localisation $R'$ of $R$.

\item When $(R,\varsigma)$ is of finite $\varsigma$-type over $\Z$, and $N$ a positive integer, 
we shall %say that $\cF$ and $\cF'$ are \emph{equivalent with respect to fields with Frobenii} over $(R,\varsigma)$ and 
write
$$
\cF\equiv_{(R,\varsigma)}^{\frob,N}\cF',
$$
if for every closed $s\in\spec^\varsigma(R)$, every finite field $k$ with $(\bar{k},\varphi_k)$ extending $(\kk(s),\sigma^s)$ and $|k|>N$, 
$$
\cF(s,(\bar{k},\varphi_k))=\cF'(s,(\bar{k},\varphi_k)).
$$
\item If $(R,\varsigma)$ is of finite $\varsigma$-type over $\Z$, we write 
$$\cF\equiv_{(R,\varsigma)}^{\frob,\infty}\cF',$$ if there is an $N>0$ such that $\cF\equiv_{(R,\varsigma)}^{\frob,N}\cF'$.
We write
$$\cF\equiv_{(R,\varsigma),\gen}^{\frob,\infty}\cF',$$ if $\cF\equiv_{(R',\varsigma)}^{\frob,\infty}\cF',$ for some finite $\varsigma$-localisation $R'$ of $R$.
\end{enumerate}
\end{definition}

\begin{definition}\label{theories}
Let $(R,\varsigma)$ be a transformal domain. 
\begin{enumerate}
\item The theory $$\acfa_{(R,\varsigma)}$$
is the set of first-order sentences $\theta$ over $(R,\varsigma)$ such that for any $s\in\spec^\varsigma(R)$ and any existentially closed difference field $(F,\varphi)$ extending $(\kk(s),\varsigma^s)$, we have
$(F,\varphi)\models\theta_s$. We write
$$\acfa_{(R,\varsigma),\gen}$$ for the union of theories $\acfa_{(R',\varsigma)}$, 
where $(R',\varsigma)$ ranges over all finite $\varsigma$-localisations of $R$.

\item If $(R,\varsigma)$ is of finite $\varsigma$-type over $\Z$, the theory $$T_{(R,\varsigma)}^\infty=T_{(R,\varsigma)}^{\frob,\infty}$$
is the set of first-order sentences $\theta$ over $(R,\varsigma)$ such that there exists a positive integer $N$ such that for every closed $s\in\spec^\varsigma(R)$, every finite field $k$ with $(\bar{k},\varphi_k)$ extending $(\kk(s),\varsigma^s)$ and $|k|>N$, we have
$(\bar{k},\varphi_k)\models\theta_s$. 
We write
$$
T^{\infty}_{(R,\varsigma),\gen}
$$
for the union of theories $T^{\infty}_{(R',\varsigma)}$, where $R'$ ranges over all finite 
$\varsigma$-localisations of $R$.
\end{enumerate}
\end{definition}

\begin{notation}\label{class-theories}
We write
\begin{enumerate}
\item $\acfa_0=\acfa_{(\Q,\id)}$ for the theory of existentially closed fields of characteristic zero;
\item $\acfa_p=\acfa_{(\F_p,\id)}$ for the theory of existentially closed fields of positive characteristic $p$;
\item $T_0^\infty=T^{\infty}_{(\Z,\id),\gen}$ for the set of sentences true in fields $(\bar{\F}_p,\varphi_q)$ for all but finitely many $p$ and all sufficiently large $q$;
\item $T^\infty_p=T^\infty_{(\F_p,\id)}$ for the set of sentences true in $(\bar{F}_p,\varphi_q)$ for all large enough $q$.
\end{enumerate}
\end{notation}

\subsection{Logic quantifier elimination}
\begin{fact}[{\cite[1.6]{zoe-udi}}]\label{log-qe}
Every formula $\psi(x)$ in the variables $x=(x_1,\ldots,x_n)$ is equivalent modulo ACFA to a disjunction of formulae of the form
$\exists y \theta(x,y)$ where $y$ is a single variable, $\theta$ is quantifier-free, and in every model $(F,\varphi)$, for every $a\in F$, 
$\theta(a,b)$ implies that $b$ is algebraic over the subfield generated by $a, \varphi(a),\ldots, \varphi^m(a)$ for some $m$. 

In the above terminology, let $(k,\varsigma)$ be a prime field (either $(\Q,id)$ or $(\F_p,id)$). Then 
$$\psi(x)^\sharp\equiv^{{\acfa}}_{(k,\varsigma)}(\bigvee_i\exists y\theta_i(x,y))^\sharp$$
for $\theta_i$ as above.
\end{fact}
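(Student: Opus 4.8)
The plan is to read the assertion as the near model-completeness of ACFA over the prime difference field $(k,\varsigma)$ and to derive it from the description of types in models of ACFA, the essential input being axiom~H (Fact~\ref{axiom-H}). Since the passage from a logical equivalence of formulae modulo ACFA to the stated equivalence $\equiv^{\text{\sc acfa}}_{(k,\varsigma)}$ of the associated $\sharp$-subassignments is immediate from the definitions, I reduce at once to proving the purely logical statement: every $\psi(x)$ is equivalent modulo ACFA to a disjunction of formulae $\exists y\,\theta(x,y)$ with $\theta$ quantifier-free and $\theta(a,b)$ forcing $b$ to be algebraic over $k(a,\varphi a,\dots,\varphi^m a)$.

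First I would apply the standard separation lemma of model theory. Let $\Delta$ be the class of formulae $\exists y\,\theta(x,y)$ of the above bounded shape; it is closed under disjunction, and closed under conjunction because, models of ACFA being perfect, a pair of witnesses --- each algebraic over some $k(a,\dots,\varphi^m a)$ --- can be coded into a single such witness via a separable primitive element together with, in positive characteristic, the (definable, since the field is perfect) extraction of $p$-th roots. The separation lemma then says that $\psi$ is equivalent modulo ACFA to a disjunction of members of $\Delta$ precisely when, for all models $M,N$ of ACFA extending $(k,\varsigma)$ and tuples $a\in M$, $b\in N$ such that every $\Delta$-formula true of $a$ is true of $b$, one has $M\models\psi(a)\Rightarrow N\models\psi(b)$. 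Thus everything reduces to analysing when $a$ and $b$ satisfy the same bounded existential formulae.

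The heart of the matter is the characterisation of types from \cite{angus}: for $a,b$ in a sufficiently saturated model, $\tp(a)=\tp(b)$ over $(k,\varsigma)$ if and only if there is a $(k,\varsigma)$-isomorphism of difference fields $\acl(a)\to\acl(b)$ sending $a$ to $b$, where $\acl(a)$ denotes the relative field-algebraic closure, inside the model, of the inversive difference field generated by $a$. I would check that ``$a$ and $b$ satisfy the same $\Delta$-formulae'' is exactly equivalent to the existence of such an isomorphism: each bounded existential formula asserts the realisability of a prescribed algebraic element of $\acl(x)$ subject to a difference-polynomial condition, and letting $\theta$ range over all such conditions records precisely the isomorphism type of $\acl(a)$ as a difference field over $(k,\varsigma)$ --- including the induced action on $\acl(\emptyset)$, so that no completion need be fixed separately. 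Granting the characterisation, equal $\Delta$-types yield an isomorphism $\acl(a)\to\acl(b)$, hence $\tp(a)=\tp(b)$, and in particular the separation hypothesis holds.

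The main obstacle is the characterisation theorem itself, and it is exactly here that axiom~H enters. Given an abstract $(k,\varsigma)$-isomorphism $\acl(a)\to\acl(b)$, one amalgamates the two difference fields over the base, extends $\varphi$ to a suitable compositum while controlling transcendence degrees, and runs a back-and-forth between the two models; at each step a finite system of difference-algebraic conditions must be solved, and existential closedness provides a solution because the associated $H$-direct object has a point by Fact~\ref{axiom-H}. The delicate issues are arranging that $\varphi$ extends compatibly on both sides and that the $H$-directness hypothesis is met at every stage, both of which are managed by the direct decomposition and genericity results \ref{direct-decomp} and \ref{loc-improvement}. Finally, the shape of the witnesses is built into the normal form of \cite{angus} and \cite{zoe-udi}: the defining difference polynomials can be taken to involve only the forward shifts $a,\varphi a,\dots,\varphi^m a$, so that any witness is field-algebraic over $k(a,\varphi a,\dots,\varphi^m a)$, which is precisely the algebraicity clause in the statement.
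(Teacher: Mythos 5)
The paper contains no proof of \ref{log-qe} to compare you against: it is imported as a black-box Fact, cited verbatim from \cite[1.6]{zoe-udi}, and the second clause (passing from logical equivalence modulo ACFA to the equivalence $\equiv^{\text{\sc acfa}}_{(k,\varsigma)}$ of $\sharp$-subassignments) is recorded without comment precisely because, as you say, it is immediate from the definitions. Judged on its own, your sketch is essentially a reconstruction of the original Chatzidakis--Hrushovski argument: the separation lemma for the class $\Delta$ of bounded existential formulae, the characterisation of types --- $\tp(a)=\tp(b)$ iff there is a difference-field isomorphism $\acl(a)\to\acl(b)$ over the base, which also determines the completions via the action on $\acl(\emptyset)$ --- and existential closedness (axiom H, \ref{axiom-H}) to drive the amalgamation and back-and-forth. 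One attribution quibble: the type characterisation you use is \cite[1.4--1.5]{zoe-udi} rather than \cite{angus}. One genuine subtlety you pass over silently: $\Delta$-formulae only see the forward shifts $a,\varphi a,\ldots,\varphi^m a$, while $\acl(a)$ taken inside the inversive closure also involves negative shifts; this is repaired by noting that $\varphi$ is an automorphism of the model, so any relation among $\varphi^{-j}a$'s is equivalent to a twisted relation among forward shifts, and any element algebraic over the inversive field becomes forward-algebraic after applying a power of $\varphi$.

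The one step you treat too lightly is closure of $\Delta$ under conjunction in characteristic $p$. ``Definable extraction of $p$-th roots'' does not by itself yield a quantifier-free $\theta$ with a \emph{single} witness: $p$-th root is not a term of the language, and inseparable extensions need not admit primitive elements, so a na\"ive coding of $(b_1,b_2)$ into one generator can fail. The standard repair: all the shifts $\sigma^jb_i$ occurring in $\theta_1\wedge\theta_2$ are algebraic over $K'=k(a,\varphi a,\ldots,\varphi^{m+r}a)$; choose a primitive element $c$ of the separable part of $K'(\sigma^jb_i)/K'$ and $q=p^n$ with $(\sigma^jb_i)^q$ rational in $c$ over $K'$, then replace every polynomial equation in $\theta_i$ by its $q$-th power --- legitimate since Frobenius is injective and commutes with $\sigma$ --- so that $\theta_1\wedge\theta_2$ becomes a quantifier-free condition on $(x,c)$ alone, with $c$ still algebraic over the forward-shift field. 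With that patch, your outline is a correct account of the proof of \cite[1.6]{zoe-udi}; nothing in it conflicts with the way the present paper uses the Fact (namely, only through Remark~\ref{dirgal-fo}, to place direct Galois formulae inside the $\exists_1$ normal form).
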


\subsection{First-order formulae associated with Galois formulae}

\begin{remark}\label{dirgal-fo}
Let $(X,\sigma)$ be an almost direct presentation. 
The previously studied subassignments of $X$ and $X^\sharp$ fit in the hierarchy of definable subassignments as follows.
\begin{enumerate}[wide]
\item The subassignment $X$ itself corresponds to a (positive) difference quantifier-free definable subset of the algebraic variety $X_1$.
\item The subassignement $X^\sharp$ itself corresponds to an existentially definable subset of the algebraic variety $X_0$, since it is a projection of $X$ via $\pi_1$, see \ref{dirpts}.
\item An almost direct Galois formula on $X$ is $\equiv$-equivalent to a definable set of the form that appears upon the logic quantifier elimination~\ref{log-qe} down to $\exists_1$-formulae.

%\begin{remark}\label{gal-2-fo}
Indeed, suppose $(Z,\Sigma)/(X,\sigma)$ is an almost direct  Galois cover with group $(G,\tilde{\Sigma})$,  fix a $\tilde{\sigma}\in\Sigma$
and a $(G,\rlexp{()}{\tilde{\sigma}})$-conjugacy domain $\dot{C}$. Then
\begin{multline*}
\langle(Z,\Sigma)/(X,\sigma),\dot{C}\rangle^\sharp(F,\varphi)\\=\{x_0\in X_0(F)\,\, |\,\, \exists z\in Z_1(F)\ \  \pi_1(z)\mapsto x_0\  \land\ 
\bigvee_{g_0\in\dot{C}}z\in(Z,g_0\tilde{\sigma})(F,\varphi)\},
\end{multline*}
so, in view of the fact that the conditions in the above disjunction are quantifier-free,  it is clear that a basic direct Galois formula is equivalent to an existential first-order formula of a particular shape. 

On the other hand, a general direct Galois formula is just a positive Boolean combination of  basic ones so the result follows.
% we refer the reader to \ref{dirgal-fo} for further discussion.

In case $Z$ and $X$ are direct, the associated first-order formula can be made even more explicit. The data yields a closed immersion 
$$
 \begin{tikzpicture} 
\matrix(m)[matrix of math nodes, row sep=2em, column sep=1.5em, text height=1.5ex, text depth=0.25ex]
 {
 			& |(3)|{Z_1} 			& 		\\
 |(l2)|{Z_0}	& |(l3)|{Z_0\times Z_0} 	& |(l4)|{Z_0}\\
 }; 
\path[->,font=\scriptsize,>=to, thin]
(3) edge node[above]{${\pi_1}$} (l2) edge node[above]{${\tilde{\sigma}}$}   (l4)
(l3) edge (l2) edge (l4) ;
\path[right hook->,>=to,thin] (3) edge (l3);
\end{tikzpicture}
$$
whence
\begin{multline*}
\langle(Z,\Sigma)/(X,\sigma),\dot{C}\rangle^\sharp(F,\varphi)\\=\{x_0\in X_0(F)\,\, |\,\, \exists z_0\in Z_0(F),\ \  z_0\mapsto x_0\  \land\ 
\bigvee_{g_0\in\dot{C}}(z_0,g_0^{-1}z_0\varphi)\in Z_1\}.
\end{multline*}
% that appears after the known logic quantifier-elimination of \cite{angus} and \cite{zoe-udi}. We will show \ldots ?????
%Comment that $X^\sharp$ is essentially a difference qf-definable subset of the algebraic variety $X_0$, and $\cA^\sharp$ is a more general definable subset of $X_0$.
% 
\end{enumerate}
\end{remark}

The goal of subsequent sections is to show that existentially closed difference fields (and, asymptotically, fields with powers of Frobenius) allow quantifier elimination for Galois formulae and every first order formula is equivalent to a Galois formula over such fields.

\section{Direct image theorems}\label{s:dirim-th}

\subsection{Direct images of Galois formulae}

\begin{definition}
Let $f:(X,\sigma)\to (Y,\sigma)$ be a morphism in $\sadir_{(R,\varsigma)}$ and let $\cA$ be an almost direct Galois stratification on $X$.
%associated with a Galois formula $\chi(x;s)\equiv\{x\in X_s\ |\ \ar(x)\subseteq\con(\cA)\}$. 
We define a subassignment $f_{\exists}\cA$ of $Y$ and a subassignment $f_{\exists}^\sharp\cA$ of $Y^\sharp$ by the following rule.
For $s\in S$ and
$(F,\varphi)$ an algebraically closed difference field extending $(\kk(s),\sigma^s)$,
\begin{enumerate}
\item
$f_{\exists}\cA(s,(F,\varphi))=(f_{\exists}\cA)_s(F,\varphi)=f_s(\cA_s(F,\varphi))\subseteq Y_s(F,\varphi)$;
\item
$
f_{\exists}^\sharp\cA(s,(F,\varphi))=(f_{\exists}^\sharp\cA)_s(F,\varphi)=
f_{0,s}(\cA_s^\sharp(F,\varphi))\subseteq Y_s^\sharp(F,\varphi).
$
\end{enumerate}
%It can also be considered
%as an expression 
%$$\upsilon(y;s)\equiv\{y\in Y_s\ |\ \exists x\ \chi(x;s), f_s(x)=y\}$$
%which justifies the notation somewhat.
\end{definition}

\begin{lemma}\label{galois-towers}
Suppose that a diagram %$(Z,\Sigma_Z)\to(X,\Sigma_X)\to(Y,\sigma)$
$$
\begin{tikzpicture} 
\matrix(m)[matrix of math nodes, row sep=0.1em, column sep=1.5em,text height=1.3ex, text depth=0.25ex]
{       
 |(u1)|{(Z,\mathring{\Sigma})}		&[.5em] |(u2)|{(Z,\Sigma)}	\\[1.5em]
 |(d1)|{(X,\sigma)} 					& 								\\
 								& |(b)|{(Y,\sigma)}            					\\}; 
\path[->,font=\scriptsize,>=to, thin]
(u1) edge  (u2) 
(u1) edge %[dashed]  
(d1)
(d1) edge node[above]{$f$} (b)
(u2) edge %[dashed] 
(b);
\end{tikzpicture}
$$
%
%
%
%
%$$
%\begin{tikzpicture} 
%\matrix(m)[matrix of math nodes, row sep=0.3em, column sep=0.1em,text height=1.3ex, text depth=0.25ex]
%{       
% |(u1)|{(Z,\Sigma_Z)}		&[1.4em] |(u2)|{}	\\[1.0em]
% |(d1)|{(X,\Sigma_X)} 		& |(d2)|{(Y,\sigma)} 		\\}; 
%\path[->,font=\scriptsize,>=to, thin]
%(u1) edge  (d2)
%(u1) edge  (d1)
%(d1) edge node[below]{$f$} (d2);
%\end{tikzpicture}
%$$
consists of almost direct Galois covers $(Z,\mathring{\Sigma})/(X,\sigma)$ and
$(Z,\Sigma)/(Y,\sigma)$, where the top horizontal arrow is induced by an inclusion
$\iota:\mathring{\Sigma}\hookrightarrow\Sigma$ and $f$ is directly finite \'etale. 
Let $C\subseteq\mathring{\Sigma}$ be a $\Gal(Z/X)$-conjugacy domain, and let $\iota_*C\subseteq \Sigma$ be the $\Gal(Z/Y)$-conjugacy domain induced by $C$.
Then
$$
f_{\exists}^\sharp\langle Z/X,C\rangle\equiv\langle Z/Y,\iota_*C\rangle^{\sharp}.
$$
\end{lemma}
\begin{proof}
We will show more, that 
$f_{\exists}\langle Z/X,C_0\rangle=\langle Z/Y,C\rangle^{\flat}$.
For the left-to-right inclusion, let $y\in Y(F,\varphi)$ be such that there exists an $x\in\langle Z/X,C\rangle(F,\varphi)$ with $f(x)=y$. Thus, there exists a $z\in (Z,\mathring{\Sigma})(F,\varphi)$ with $z\mapsto x$ and $\varphi_z\in C$, so that its $\Gal(Z/X)$-conjugacy class $\varphi_x\subseteq C$. But $\varphi_y$ is the $\Gal(Z/Y)$-conjugacy class of $\varphi_z$ so it is clearly contained in $\iota_*C$.

For the other inclusion, suppose $y\in\langle Z/Y,C\rangle(F,\varphi)$ for an algebraically closed difference field $(F,\varphi)$. There exists a $z\in (Z,\Sigma)(F,\varphi)$ such that $z\mapsto y$ and $$\varphi_z\in \iota_*C=\bigcup_{g\in\Gal(Z/Y)}\lexp{g}{C},$$ 
%=\bigcup_{g_1\in\Gal(Z_1/X_1)}g_1^{\pi_1}C_0g_1^{-1},$$
so let $\varphi_z\in \lexp{g}{C}$ for some $g\in\Gal(Z/Y)$. Then $\varphi_{g^{-1}z}\in C$, so the image $x$ of $g^{-1}z$ in $X$ witnesses $f(x)=y$ and $x\in\langle Z/X,C\rangle(F,\varphi)$.
\end{proof}

\begin{proposition}\label{dirim-finet}
Let $f:(X,\sigma)\to(Y,\sigma)$ be a directly finite \'etale morphism in $\sadir$, let $(Z,\Sigma)/(X,\sigma)$ be an almost direct Galois cover, and let $C\subseteq\Sigma$ be a conjugacy domain. Let $(\tilde{Z},\mathring{\Sigma})$, $(\tilde{Z},\tilde{\Sigma})$, $\iota:\mathring{\Sigma}\hookrightarrow\tilde{\Sigma}$ be the data associated with the direct Galois closure of $Z$ over $Y$. %, so that $\mathring{Z}=\iota^*\tilde{Z}$. 
Let $\mathring{C}$ be the preimage of $C$ under the surjection $\mathring{\Sigma}\to\Sigma$, and let $\iota_*\mathring{C}$ be the least conjugacy domain in $\tilde{\Sigma}$ containing $\iota(\mathring{C})$.
Then
$$
f_{\exists}^\sharp\langle Z/X,C\rangle\equiv\langle\tilde{Z}/Y,\iota_*\mathring{C}\rangle^\sharp.
$$
\end{proposition}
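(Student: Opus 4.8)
The plan is to reduce the general case of a direct Galois cover $(Z,\Sigma)/(X,\sigma)$ to the situation covered by Lemma~\ref{galois-towers}, which handles a triangle of directly \'etale almost direct Galois covers. The obstruction to applying that lemma directly is that $Z\to X\to Y$ need not be a tower of \emph{Galois} covers: while $Z/X$ is Galois by hypothesis and $X/Y$ is directly finite \'etale, the composite $Z/Y$ typically is not Galois. This is precisely why the statement invokes the direct Galois closure $(\tilde{Z},\tilde{\Sigma})$ of $Z$ over $Y$ from Proposition~\ref{dirgalcl}, together with $(\mathring{Z},\mathring{\Sigma})=\iota^*\tilde{Z}$, which \emph{is} Galois over $X$ and dominates $Z$.

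\medskip

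First I would record that inflating $\langle Z/X,C\rangle$ along the surjection $\mathring{\Sigma}\to\Sigma$ does not change the underlying point set: by the inflation property in Definition~\ref{inflation-refinement}(1), $\langle Z/X,C\rangle$ and $\langle\mathring{Z}/X,\mathring{C}\rangle$ define the same subassignment of $X$ (hence of $X^\sharp$ after projection), where $\mathring{C}$ is the preimage of $C$. Consequently
$$
f_{\exists}^\sharp\langle Z/X,C\rangle=f_{\exists}^\sharp\langle\mathring{Z}/X,\mathring{C}\rangle,
$$
and I have reduced to computing the direct image of a Galois formula whose cover $\mathring{Z}$ is Galois over \emph{both} $X$ and $Y$. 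Now $\mathring{Z}\to X\to Y$ and $\tilde{Z}\to Y$ fit into exactly the triangle of directly \'etale almost direct Galois covers required by Lemma~\ref{galois-towers}, since the direct Galois closure construction of Proposition~\ref{dirgalcl} makes both $\mathring{Z}\to X$ and $\tilde{Z}\to Y$ almost direct Galois covers with $\mathring{Z}=\iota^*\tilde{Z}$.

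\medskip

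The heart of the matter is then to match the conjugacy data. I would apply Lemma~\ref{galois-towers} to the cover $\mathring{Z}$, with $X$-conjugacy domain $\mathring{C}$, obtaining
$$
f_{\exists}^\sharp\langle\mathring{Z}/X,\mathring{C}\rangle\equiv\langle\mathring{Z}/Y,\mathring{C}^{Y}\rangle^\sharp,
$$
where $\mathring{C}^{Y}$ is the $\Gal(\mathring{Z}/Y)$-conjugacy domain in $\mathring{\Sigma}$ generated by $\mathring{C}$. The remaining step is to transport this formula on $\mathring{Z}/Y$ across the inclusion $\iota:\mathring{\Sigma}\hookrightarrow\tilde{\Sigma}$ to the Galois closure $\tilde{Z}/Y$, identifying $\langle\mathring{Z}/Y,\mathring{C}^{Y}\rangle^\sharp$ with $\langle\tilde{Z}/Y,\iota_*\mathring{C}\rangle^\sharp$. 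Here $\iota_*\mathring{C}$ is by definition the least $\tilde{\Sigma}$-conjugacy domain containing $\iota(\mathring{C})$, so the key verification is that saturating $\iota(\mathring{C})$ under $\Gal(\tilde{Z}/Y)$-conjugacy in $\tilde{\Sigma}$ yields the same realisation set as saturating $\mathring{C}$ under $\Gal(\mathring{Z}/Y)$-conjugacy in $\mathring{\Sigma}$. This amounts to compatibility between the restriction map $\Gal(\tilde{Z}/Y)\to\Gal(\mathring{Z}/Y)$ on the respective torsors $\tilde{\Sigma}$ and $\mathring{\Sigma}$, together with the fact that since $\mathring{Z}=\iota^*\tilde{Z}$ the local $\varphi$-substitutions at corresponding points agree under $\iota$; the point set described by a conjugacy domain depends only on the local substitution class, so the two saturations cut out the same points.

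\medskip

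\textbf{Main obstacle.} The step I expect to require the most care is the last one: verifying that the conjugacy-domain transport along $\iota$ is exact, i.e.\ that $\iota_*\mathring{C}$ as defined (least conjugacy domain containing $\iota(\mathring{C})$) reproduces exactly the realisation set of $\langle\mathring{Z}/Y,\mathring{C}^{Y}\rangle^\sharp$, with no spurious points introduced by the closure operation. One must check that for a geometric point the local substitution in $\tilde{\Sigma}$ lands in $\iota_*\mathring{C}$ if and only if, after choosing an appropriate lift through $\mathring{Z}=\iota^*\tilde{Z}$, the corresponding substitution in $\mathring{\Sigma}$ lands in $\mathring{C}^{Y}$; this uses the faithfulness afforded by Lemma~\ref{faithf-lemma} and the description of local substitutions under dominating covers from Definition~\ref{dir-loc-sub}. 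The remaining identifications (inflation invariance, the triangle hypotheses for Lemma~\ref{galois-towers}) are routine given the results already established.
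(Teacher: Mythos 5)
Your opening inflation step is fine and agrees with the paper's setup, but the central step fails. Lemma~\ref{galois-towers} is stated for a triangle in which \emph{all three} arrows --- $Z\to X$, $Z\to Y$ \emph{and} $f:X\to Y$ --- are directly \'etale almost direct Galois covers. In your reduction, $f$ is only directly finite \'etale (not Galois), and $\mathring{Z}$ is not Galois over $Y$ either: only $\tilde{Z}$ is, and $\mathring{Z}=\iota^*\tilde{Z}$ is a genuine restriction of it (its level-one scheme is a component of a pullback of $\tilde{Z}_1$). Hence your displayed equivalence $f_{\exists}^\sharp\langle\mathring{Z}/X,\mathring{C}\rangle\equiv\langle\mathring{Z}/Y,\mathring{C}^{Y}\rangle^\sharp$ is not an instance of the lemma, and the object $\mathring{C}^{Y}$ --- ``the $\Gal(\mathring{Z}/Y)$-conjugacy domain generated by $\mathring{C}$'' --- is not even well-defined, since $\mathring{Z}/Y$ carries no direct Galois group. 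The non-Galoisness of $f$ is the entire reason Proposition~\ref{dirim-finet} exists as a separate statement; were $X/Y$ Galois, Lemma~\ref{galois-towers} would conclude immediately.

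The missing idea is the intermediate direct Galois closure of $X$ itself over $Y$, supplied by Proposition~\ref{dirgalcl}: one factors the situation through the Galois covers $r:\mathring{\tilde{X}}\to X$ and $\tilde{f}:\tilde{X}\to Y$, so that $\mathring{f}=f\circ r$ also factors through $\tilde{f}$. The paper then applies Lemma~\ref{galois-towers} \emph{twice}, to the genuinely Galois triangles $\mathring{Z}\to\mathring{\tilde{X}}\to X$ and $\tilde{Z}\to\tilde{X}\to Y$: viewing $C$ as a $\Gal(\mathring{Z}/\mathring{\tilde{X}})$-conjugacy domain $C'$ (inducing it back over $X$ recovers $C$, so $r_{\exists}\langle\mathring{Z}/\mathring{\tilde{X}},C'\rangle\equiv\langle\mathring{Z}/X,C\rangle$), then transporting $C'$ to a domain $C''$ over $\tilde{X}$, one gets the chain $f_{\exists}\langle\mathring{Z}/X,C\rangle\equiv f_{\exists}r_{\exists}\langle\mathring{Z}/\mathring{\tilde{X}},C'\rangle\equiv\mathring{f}_{\exists}\langle\mathring{Z}/\mathring{\tilde{X}},C'\rangle\equiv\tilde{f}_{\exists}\langle\tilde{Z}/\tilde{X},C''\rangle\equiv\langle\tilde{Z}/Y,\iota_*\mathring{C}\rangle^\flat$. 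In this route the conjugacy transport happens inside two legitimate applications of the lemma, and there is no separate ``saturation'' step to justify. Your final paragraph correctly flags the transport along $\iota$ as the crux, but what you offer there is an assertion of compatibility, not an argument: as written, nothing rules out spurious points entering when you close $\iota(\mathring{C})$ under $\Gal(\tilde{Z}/Y)$-conjugacy, and without interposing $\tilde{X}$ I do not see how to supply the missing verification.
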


\begin{proof}
As in the previous proof, let us show that $f_{\exists}\langle Z/X,C\rangle\equiv\langle\tilde{Z}/Y,\iota_*\mathring{C}\rangle^\flat$ already. The diagram 
$$
\begin{tikzpicture} 
 [cross line/.style={preaction={draw=white, -,line width=3pt}}]
\matrix(m)[matrix of math nodes, row sep=0.1em, column sep=1.0em,text height=1.3ex, text depth=0.25ex]
{       
					&[0em]|(h1)|{\mathring{Z}} &			&|(h2)|{\tilde{Z}}\\[.3em]
 |(u1)|{Z} 	& 			& 	&		\\[1.5em]
 |(d1)|{X} 				&			& 					&		\\
 					&			& |(b)|{Y}            		&		\\}; 
\path[->,font=\scriptsize,>=to, thin]
(h1) edge (u1) edge (d1) edge (h2)
(h2) edge (b)
(u1)  edge [cross line] (b)
(u1) edge  (d1)
(d1) edge node[below,pos=0.4]{$f$}(b)
;
\end{tikzpicture}
$$
%$$
%\begin{tikzpicture} 
% [cross line/.style={preaction={draw=white, -,line width=3pt}}]
%\matrix(m)[matrix of math nodes, row sep=0.1em, column sep=1.0em] %,text height=1.3ex, text depth=0.25ex]
%{       
%					&[1em]|(h1)|{\mathring{Z}} &					&[1em]|(h2)|{Z}\\[.3em]
% |(u1)|{\mathring{\tilde{X}}} 	& 			&[1.0em] |(u2)|{\tilde{X}}	&		\\[1.5em]
% |(d1)|{X} 				&			& 					&		\\
% 					&			& |(b)|{Y}            		&		\\}; 
%\path[->,font=\scriptsize,>=to, thin]
%(h1) edge (u1) edge (d1) edge (h2)
%(h2) edge (u2) edge (b)
%(u1) edge [cross line]  (u2) edge [cross line] node[above right]{$\mathring{f}$} (b)
%(u1) edge node[left]{$r$} (d1)
%(d1) edge node[below]{$f$}(b)
%(u2) edge node[pos=0.3,right]{$\tilde{f}$} (b);
%\end{tikzpicture}
%$$
shows the situation described in the statement, where we wrote $\mathring{Z}$ for $(\tilde{Z},\mathring{\Sigma})$. 

We have
$$
f_\exists\langle Z/X,C\rangle\stackrel{\text{(inflation)}}{\equiv}
f_\exists\langle \mathring{Z}/X,\mathring{C}\rangle\stackrel{(\ref{galois-towers})}{\equiv}
\langle Z/Y,\iota_*\mathring{C}\rangle^\flat.
$$
%
%
%
%upon the contruction 
%of the direct Galois closure of $X$ over $Y$ using \ref{dirgalcl}.
%Starting with a $\Gal(\mathring{Z}/X)$-conjugacy class $C$ in $\mathring{\Sigma}_Z$, we can consider it as a $\Gal(\mathring{Z}/\mathring{X})$-conjugacy domain $C'$, and write $C''$ for $C'$ considered in $Z/\tilde{X}$, so that $\tilde{C}=\iota_*C$ is the $\Gal(Z/Y)$-conjugacy domain induced by $C''$.
%Using \ref{galois-towers}, we have that $\tilde{f}_{\exists}\langle Z/\tilde{X},C''\rangle\equiv\langle Z/Y,\tilde{C}\rangle$ and 
%$r_{\exists}\langle \mathring{Z}/\mathring{\tilde{X}},C'\rangle\equiv\langle\mathring{Z}/X,C\rangle$.
%Thus
%$$
%f_{\exists}\langle\mathring{Z}/X,C\rangle\equiv f_{\exists}r_{\exists}\langle \mathring{Z}/\mathring{\tilde{X}},C'\rangle\equiv \mathring{f}_{\exists}\langle \mathring{Z}/\mathring{\tilde{X}},C'\rangle\equiv \tilde{f}_{\exists}\langle Z/\tilde{X},C''\rangle\equiv \langle Z/Y,\tilde{C}\rangle^{\flat}.
%$$
\end{proof}

\begin{proposition}\label{dirim-geom-conn}
Let $f:(X,\sigma)\to(Y,\sigma)$ be a morphism of directly integral varieties in $\sadir_{(R,\varsigma)}$ which is directly universally submersive with geometrically connected fibres, and let $(Z,\Sigma)\to(X,\sigma)$ be an almost direct Galois cover and let $C\subseteq\Sigma$ be a conjugacy domain. Let $f_*Z$ be the almost direct Galois cover of $Y$ obtained in \ref{pfwd-submr}.
Then there is a finite $\varsigma$-localisation $R'$ of $R$ such that
$$
f_{\exists}^\sharp\langle Z/X,C\rangle\equiv_{(R',\varsigma)}^{{\acfa}}\langle f_*Z/Y,f_*C\rangle^\sharp,
$$
where $f_*C$ denotes the image of $C$ under the surjective map $\Sigma\to\Sigma_{f_*Z}$.
\end{proposition}
\begin{proposition}\label{dirim-geom-conn-frob}
When $(R,\varsigma)$ is of finite $\varsigma$-type over $\Z$, there exists a positive integer $N$ and a finite $\varsigma$-localisation $R'$ of $R$ such that we have the analogous statement of \ref{dirim-geom-conn} with
$$
f_{\exists}^\sharp\langle Z/X,C\rangle\equiv_{(R',\varsigma)}^{\frob,N}\langle f_*Z/Y,f_*C\rangle^\sharp,
$$
\end{proposition}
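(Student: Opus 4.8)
The plan is to re-run the proof of Proposition~\ref{dirim-geom-conn} and to isolate the single place where the theory of the base field intervenes, replacing the appeal to axiom~H (Fact~\ref{axiom-H}) by the twisted Lang--Weil estimate of Fact~\ref{twist-LW}. As in \ref{galois-towers} and \ref{dirim-finet}, one inclusion is purely algebraic and holds over an arbitrary difference field: if $x\in\langle Z/X,C\rangle(F,\varphi)$ has $f_0(x_0)=y_0$, then the image in $f_*Z$ of the witnessing point $z\in Z$ above $x$ lies above $y$, and its local $\varphi$-substitution is the image of $\varphi_z\in C$ under the surjection $\Sigma\to\Sigma_{f_*Z}$, hence lies in $f_*C$; thus $y\in\langle f_*Z/Y,f_*C\rangle(F,\varphi)$. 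This containment is unaffected by the passage from ACFA to fields with Frobenius, so all the content sits in the reverse inclusion.

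For the reverse inclusion one is given $y$ in the pushforward stratum together with a witness $w\in f_*Z$ above it with $\varphi_w\in f_*C$, and must produce a preimage $x\in X$ of $y$ and a point $z\in Z$ above $x$ mapping to $w$ with $\varphi_z\in C$. Using the unit $Z\to f^*f_*Z=f_*Z\times_YX$ and the exact sequence $1\to\Gal(Z/f^*f_*Z)\to\Gal(Z/X)\to\Gal(f_*Z/Y)\to 1$ of \ref{dirimgal}, the admissible pairs $(x,z)$ (with the prescribed local substitution) are exactly the realisations of an auxiliary presentation $(V,\sigma)$ fibred over the finite \'etale cover $f_*Z\to Y$: its fibre over a choice of $(y,w)$ encodes a point $x$ of the fibre $f^{-1}(y)$ together with a lift $z$ of $(w,x)$ along the Galois cover $Z\to f^*f_*Z$. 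The geometric connectedness of the fibres of $f$, together with \ref{loc-improvement}, ensures that after a direct localisation $(V,\sigma)$ has geometrically H-direct fibres. Since $Y$ and $f_*Z$ are of finite presentation over $R$ and $(R,\varsigma)$ is of finite $\varsigma$-type over $\Z$, the transformal coordinate ring over which $(V,\sigma)$ is defined remains of finite $\varsigma$-type over $\Z$, so Fact~\ref{twist-LW} applies to it in place of axiom~H.

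Thus the plan is to carry out the algebraic containment, set up $(V,\sigma)$ and verify it is generically geometrically H-direct, and finally invoke Fact~\ref{twist-LW} to obtain a finite $\varsigma$-localisation $R'$ and an integer $N$ with $V_s^\sharp(\bar{\F}_p,\varphi_q)\neq\emptyset$ for every relevant $s$ and every $q\geq N$, which furnishes the required preimage and hence the reverse inclusion over all fields $(\bar{k},\varphi_k)$ with $|k|>N$. I expect the main obstacle to be the parameter bookkeeping: one must express the dependence on $y$ (and on $s\in\spec^\varsigma(R)$) uniformly, realising $(V,\sigma)$ as a single object over an enlarged transformal base so that the \emph{one} bound $N$ and the \emph{one} localisation $R'$ delivered by Fact~\ref{twist-LW} serve simultaneously for every closed $s$ and every choice of $y$. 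The uniformity built into Fact~\ref{twist-LW}, namely a single $N$ valid across all of $\spec^\varsigma(R')$, is precisely what makes this possible, exactly mirroring the way the uniform corollary following Fact~\ref{axiom-H} underpins the ACFA version.
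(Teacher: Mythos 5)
Your proposal follows essentially the same route as the paper: the paper's entire proof of \ref{dirim-geom-conn-frob} is the remark that one reruns the proof of \ref{dirim-geom-conn} verbatim, replacing the single appeal to axiom~H (\ref{axiom-H}) by the twisted Lang--Weil estimate (\ref{twist-LW}), and you correctly identify that the only genuine content beyond that substitution is the uniformity bookkeeping --- realising the fibrewise existence problem as a single direct presentation over an enlarged transformal base of finite $\varsigma$-type over $\Z$, so that the one localisation $R'$ and the one bound $N$ furnished by \ref{twist-LW} serve all closed $s$ and all points $y$ simultaneously (with the excised bad locus handled as in \ref{cor-dirdirim}). One detail where you deviate from the paper, and where your justification is slightly off: you insist on producing the lift $z\in Z$ with $\varphi_z\in C$ directly, so your auxiliary object $(V,\sigma)$ has fibres that are the fibres of $Z\to f_*Z$, i.e.\ finite \'etale covers of degree $|\Gal(Z/f^*f_*Z)|$ of the fibres of $f$; the ``geometric connectedness of the fibres of $f$ together with \ref{loc-improvement}'' that you cite does not by itself make these geometrically integral. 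What saves your variant is the maximality built into $f_*Z$: in the normal situation $\kk(f_*Z)$ is the relative algebraic closure of $\kk(Y)$ in $\kk(Z)$, hence is relatively algebraically closed in $\kk(Z)$, so the generic fibre of $Z\to f_*Z$ is indeed geometrically integral --- but that is the fact you must invoke. The paper sidesteps this issue entirely by first inflating, replacing $\langle Z/X,C\rangle$ by $\langle f^*f_*Z/X,f_*C\rangle$ via \ref{inflation-refinement} and \ref{pfwd-submr}, so that the existence theorem is applied to the twisted fibres $(f^*f_*Z_{y'},\tau)$, which are literally fibres of $f$ and geometrically integral by hypothesis; you may want to adopt that reduction, as it makes the H-directness of the relevant fibres immediate rather than an extra lemma.
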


\begin{proof}[Proof of \ref{dirim-geom-conn}.]
Writing $W=f_*Z$, the diagram
$$
 \begin{tikzpicture} 
 [cross line/.style={preaction={draw=white, -,
line width=3pt}}]
\matrix(m)[matrix of math nodes, minimum size=1.7em,
inner sep=0pt,
row sep=1em, column sep=2.8em, text height=1.5ex, text depth=0.25ex]
 { 
 	 |(3)|{Z}  & 			&			\\
                       & |(P)|{f^*W}	& |(2)| {W}          \\[2em]
                       &|(1)|{X}             & |(h)|{Y} \\};
\path[->,font=\scriptsize,>=to, thin]
(P) edge  (1) edge (2)
(1) edge  (h)
(2) edge[cross line]  (h)
(3) edge (1) edge[cross line] (2) edge (P)
;
\end{tikzpicture}
$$
resulting from \ref{pfwd-submr} shows (by inflation, \ref{inflation-refinement}) 
that $\langle Z/X,f^*f_*C\rangle=\langle f^*W/X,f_*C\rangle$, and so $f_\exists \langle Z/X,C\rangle=f_\exists\langle Z/X,f^*f_*C\rangle=f_\exists\langle f^*W/X,f_*C\rangle$. Thus, it is sufficient to prove that, for any conjugacy domain $D\subseteq \Sigma_W$, 
$$
f_{\exists}\langle f^*W/X,D\rangle\equiv\langle W/Y,D\rangle.
$$
Indeed, let $y\in Y(F,\varphi)$ with $\varphi_y\in D$. There exists an $y'\in W(F,\varphi)$ with $y'\mapsto y$ and $\varphi_{y'}\in D$, i.e., 
there exists a $\tau\in D$ such that $y'\in W^\tau(F,\varphi)$, $\tau y_1'=y_0'\varphi$. By construction, the fibre $(f^*W_{y'},\tau)$ is directly geometrically integral, so by Axiom H (\ref{axiom-H}), there exists an $x'\in (f^*W_{y'},\tau)(F,\varphi)$ such that its image $x\in X(F,\varphi)$ witnesses $\varphi_x\subseteq D$, $f(x)=y$.
\end{proof}
The proof of \ref{dirim-geom-conn-frob} is completely analogous, one simply replaces the use of Axiom H by the use of the twisted Lang-Weil estimate \ref{twist-LW}.

\begin{theorem}\label{dirdirim-acfa}
Let $f:(X,\sigma)\to(Y,\sigma)$ be a morphism of direct varieties in $\sadir_{(R,\varsigma)}$, with $(R,\varsigma)$ a transformal domain. Let $\cA$ be an almost direct Galois stratification on $(X,\sigma)$. Then there exists an integer $n$, a localisation $(R',\varsigma)$ of $R_{-n}$ and an almost direct Galois stratification $\cB$ on $(Y,\sigma)$ defined over $R'$ such that
$$
f_{\exists}^\sharp\cA\equiv_{(R',\varsigma)}^{{\acfa}}\cB^\sharp.
$$
%Moreover, if $(Y,\sigma)$ is direct, then $\cB$ can be made direct.
\end{theorem}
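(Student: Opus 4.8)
The plan is to reduce to a single \emph{basic} Galois formula and then to factor $f$ through its ``relative constant field'' into a composition of a morphism with geometrically integral fibres and a directly finite \'etale morphism, so that the two direct image results already established, \ref{dirim-geom-conn} and \ref{dirim-finet}, apply in turn. The two permanence constructions \ref{pfwd-submr} and \ref{dirgalcl} are exactly what is needed to carry the direct Galois cover through each step of this factorisation.

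First I would use the Boolean structure to write $\cA=\bigvee_i\langle Z_i/X_i,C_i\rangle$; since $f_\exists^\sharp$ commutes with unions, it suffices to treat a single basic stratum $\langle Z/X,C\rangle$ with $(X,\sigma)$ directly integral and directly normal. Applying \ref{direct-decomp} together with the generic improvement results of \ref{loc-improvement}, I would stratify $(Y,\sigma)$ into finitely many directly integral normal locally closed subobjects and pull the stratification back to $X$, thereby reducing to the case in which $f$ is a dominant morphism of directly integral normal objects, everything being defined over a localisation of some $R_{-n}$.

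The crucial step is then to factor $f$ as $(X,\sigma)\xrightarrow{g}(Y',\sigma)\xrightarrow{h}(Y,\sigma)$, where $(Y',\sigma)$ is realised as (a localisation of) the normalisation of $Y$ in the relative algebraic closure of $\kk(Y)$ inside $\kk(X)$, performed compatibly on the three components $X_0$, $X_1$, $X_{0\varsigma}$. After one further direct localisation supplied by \ref{loc-improvement}, $h$ is directly finite \'etale while $g$ is directly universally submersive with geometrically integral fibres. I would then apply \ref{dirim-geom-conn} to $g$ (with the pushforward cover of \ref{pfwd-submr}), obtaining $g_\exists^\sharp\langle Z/X,C\rangle\equiv_{(R',\varsigma)}^{\text{\sc acfa}}\langle g_*Z/Y',g_*C\rangle^\sharp$, and then \ref{dirim-finet} to $h$ (with the direct Galois closure of \ref{dirgalcl}); composing $f_\exists^\sharp=h_\exists^\sharp g_\exists^\sharp$ yields a single basic Galois formula on the relevant stratum of $Y$.

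Finally I would run a Noetherian d\'evissage on $\dim X_1$: each localisation above discards a closed complement of strictly smaller dimension, to which the induction hypothesis applies, and the resulting pieces recombine under $\lor$. Passing to a common refinement via inflation and refinement \ref{inflation-refinement}, the per-stratum basic formulae assemble into one almost direct Galois stratification $\cB$ on $(Y,\sigma)$ over a localisation $R'$ of some $R_{-n}$, with $f_\exists^\sharp\cA\equiv_{(R',\varsigma)}^{\text{\sc acfa}}\cB^\sharp$; and if $(Y,\sigma)$ is direct I would invoke \ref{almdir2dir} to replace each almost direct cover in $\cB$ by a direct cover it dominates, transporting the conjugacy domains along the associated surjection so that the realisation subassignment is unchanged. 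The hard part will be the relative-constants factorisation in the twisted correspondence setting: one must realise $(Y',\sigma)$ as a genuine $\sadir$-object carrying its own closed correspondence $Y_1'\hookrightarrow Y_0'\times Y_{0\varsigma}'$ and $g,h$ as genuine $\sadir$-morphisms, reconciling the separate directly-$P$ conditions on $f_0,f_1,f_{0\varsigma}$ with a single factorisation so that $g$ acquires geometrically integral fibres and Axiom~H (\ref{axiom-H}) becomes applicable; by contrast, the assembly of the global stratification and the tracking of the $R_{-n}$-parameters are routine but tedious bookkeeping.
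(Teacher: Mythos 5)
Your proposal is correct and follows essentially the same route as the paper: reduction to a single basic stratum over H-direct normal objects, followed by exactly the factorisation you describe --- the paper calls it a direct ``baby'' Stein factorisation, normalising $Y_i$ in the relative algebraic closure $L_i$ of $\kk(Y_i)$ in $\kk(X_i)$ for $i=0,1,0\varsigma$, with $L_0\hookrightarrow L_1\hookleftarrow L_{0\varsigma}$ supplying the $\sadir$-structure on $\tilde{Y}$ --- then \ref{dirim-geom-conn} and \ref{dirim-finet} applied in turn after localisation, devissage on the lower-dimensional complements, and \ref{almdir2dir} for the ``moreover'' clause. The step you single out as the hard part is indeed the heart of the paper's argument, and your handling of it matches the paper's.
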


\begin{theorem}\label{dirdirim-frob}
When $(R,\varsigma)$ is of finite $\varsigma$-type over $\Z$, there exists a positive integer $N$ such that we have the analogous statement of \ref{dirdirim-acfa} with
$$
f_{\exists}^\sharp\cA\equiv_{(R',\varsigma)}^{\frob,N}\cB^\sharp.
$$
\end{theorem}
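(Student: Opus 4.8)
The plan is to re-run the proof of \ref{dirdirim-acfa} almost verbatim: the entire argument is a formal reduction that manipulates direct Galois covers using only algebraic geometry, \emph{except} for one model-theoretic input, the pushforward of a Galois formula along a submersive morphism. In \ref{dirdirim-acfa} that input is the ACFA statement \ref{dirim-geom-conn}; here I would replace it by its Frobenius counterpart \ref{dirim-geom-conn-frob}. The only genuinely new point is to ensure that the finitely many Lang--Weil thresholds produced along the way can be amalgamated into a single integer $N$.

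First I would reduce to a single stratum. Since $f_{\exists}^\sharp\cA=\bigcup_i f_{\exists}^\sharp\langle Z_i/X_i,C_i\rangle$ and a finite disjunction of direct Galois formulae is again one, it suffices to treat $\cA=\langle Z/X,C\rangle$ for a single almost direct Galois cover over a directly integral stratum. Using \ref{direct-decomp} and the subsequent stratification proposition I would pass to a $\varsigma$-localisation of some $R_{-n}$ over which the relevant restriction of $f$ is directly dominant between directly integral objects, and then factor it, after a further direct localisation, as $(X,\sigma)\xrightarrow{g}(X',\sigma)\xrightarrow{h}(Y,\sigma)$, where $X'$ corresponds to the relative algebraic closure of $\kk(Y)$ in $\kk(X)$. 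By \ref{loc-improvement} the map $g$ becomes directly universally submersive with geometrically connected (indeed integral) fibres, while $h$ becomes directly finite \'etale once its Galois closure is formed via \ref{dirgalcl}; any purely inseparable part of $h$ is harmless since it is bijective on geometric points and so does not affect $\sharp$-realisations.

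Next I would compute $f_{\exists}^\sharp=h_{\exists}^\sharp\circ g_{\exists}^\sharp$ in two stages. For the submersive stage I apply \ref{dirim-geom-conn-frob}, obtaining a positive integer $N_g$ and a finite $\varsigma$-localisation above which $g_{\exists}^\sharp\langle Z/X,C\rangle\equiv^{\text{\sc Frob},N_g}\langle g_*Z/X',g_*C\rangle^\sharp$. For the finite \'etale stage I apply \ref{dirim-finet}, which yields a \emph{genuine} equivalence $\equiv$; being a statement about Galois covers of algebraic schemes it holds over every algebraically closed field, in particular over each $(\bar{\F}_p,\varphi_q)$, and therefore contributes no threshold. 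Since $h_{\exists}^\sharp$ is defined pointwise, applying it to both sides of the previous $\equiv^{\text{\sc Frob},N_g}$ preserves that equivalence, and composing with \ref{dirim-finet} produces the stratification $\cB$ on $Y$; it is made direct, when $(Y,\sigma)$ is direct, by \ref{almdir2dir}.

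The hard part will be the \emph{uniformity} of the threshold. Each of the finitely many strata, and each invocation of \ref{dirim-geom-conn-frob} inside it, delivers its own $N_i$ and its own finite $\varsigma$-localisation $R_i'$; I would set $N=\max_i N_i$ and let $R'$ be the common $\varsigma$-localisation. What makes this work is that \ref{dirim-geom-conn-frob}, which rests on the twisted Lang--Weil estimate \ref{twist-LW}, already furnishes a threshold uniform over \emph{all} closed $s\in\spec^\varsigma(R')$ after a single finite localisation; I only need to check that this uniformity survives the reduction and factorisation, which it does because those steps are finite in number and purely geometric. One must also confirm that the residue difference fields $(\kk(s),\sigma^s)$ at closed points of $\spec^\varsigma(R')$ are finite and embed into the $(\bar{\F}_p,\varphi_q)$ quantified over by $\equiv^{\text{\sc Frob},N}$, so that \ref{twist-LW} applies fibrewise exactly as in the ACFA case. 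Modulo this bookkeeping, the Frobenius statement follows by the same formal argument as \ref{dirdirim-acfa}.
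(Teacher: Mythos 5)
Your proposal is correct and follows the paper's own route: the paper proves \ref{dirdirim-acfa} and \ref{dirdirim-frob} by one argument, reducing via direct decomposition and localisation to H-direct normal objects, performing the same `baby' Stein factorisation through the normalisation in the relative algebraic closure, handling the submersive stage by \ref{dirim-geom-conn} (replaced verbatim by \ref{dirim-geom-conn-frob} in the Frobenius case, i.e.\ Axiom~H swapped for the twisted Lang--Weil estimate \ref{twist-LW}), the directly finite \'etale stage by the threshold-free \ref{dirim-finet}, and finishing by devissage and \ref{almdir2dir}. Your explicit bookkeeping of taking $N=\max_i N_i$ over the finitely many (dimension-bounded) devissage steps is exactly what the paper leaves implicit.
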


\begin{proof}[Proof of \ref{dirdirim-acfa} and \ref{dirdirim-frob}.]
Upon a direct irreducible decomposition and a localisation, we may assume that $f$ is a morphism of H-direct normal objects, and that $\cA$ is given as $\langle(Z,\Sigma)/(X,\sigma),C\rangle$ for an almost direct Galois cover $Z/X$. 

We begin by performing a direct `baby' Stein factorisation as follows. Let $L_i$ be the relative algebraic closure of $\kk(Y_i)$ in $\kk(X_i)$ for $i=0,1$,  so that $L_{0\varsigma}$ is the relative algebraic closure of $\kk(Y_{0\varsigma})$ in $\kk(X_{0\varsigma})$.
% and let $L_1$ be the relative algebraic closure of $\kk(Y_1)$ in $\kk(X_1)$. 
Clearly $L_0\hookrightarrow L_1\hookleftarrow L_{0\varsigma}$, and let 
$\tilde{Y}_i$ be the normalisation of $Y_i$ in $L_i$ for $i=0,1$,  so that $\tilde{Y}_{0\varsigma}$ is the normalisation of $Y_{0\varsigma}$ in $L_{0\varsigma}$. The resulting diagram
$$
 \begin{tikzpicture}
[cross line/.style={preaction={draw=white, -,
line width=4pt}}]
\matrix(m)[matrix of math nodes, row sep=.9em, column sep=.5em, text height=1.5ex, text depth=0.25ex]
{			& |(x0)| {X_0}	&				& |(x1)| {X_1} 	&			& |(x0s)| {X_{0\varsigma}}	\\   [.2em]
|(y0)|{\tilde{Y}_0} &			& |(y1)|{\tilde{Y}_1} 	&			&  |(y0s)| {\tilde{Y}_{0\varsigma}}&			\\  [.4em]
%		& |(d4)| {X}	&			& |(d3)| {Y}	\\   %[.8cm]
			& |(s0)|{Y_0} 		&			& |(s1)|{Y_1} 	&			& |(s0s)|{Y_{0\varsigma}} 				\\};
\path[->,font=\scriptsize,>=to, thin]
(x1) edge node[above]{} (x0) edge node[above]{} (x0s) 
	edge node[left,pos=0.3]{} (y1) edge (s1)
(x0) edge node[left,pos=0.3]{} (y0) edge (s0)
(x0s) edge node[left,pos=0.3]{} (y0s) edge (s0s)
(y0) edge (s0)
(y0s) edge (s0s)
(y1) edge [cross line] node[above,pos=0.2]{} (y0) edge [cross line] node[above,pos=0.8]{} (y0s)  edge (s1)
(s1) edge node[above]{} (s0) edge node[above]{} (s0s) 
;
\end{tikzpicture}
$$ 
gives the required factorisation of $f$, allowing us to reduce the consideration to the following two cases.

By localising, we may assume that the morphism $(X,\sigma)\to(\tilde{Y},\sigma)$  is directly universally submersive with geometrically integral fibres, so we reduce to the known case \ref{dirim-geom-conn}.
The complement is lower dimensional, so we proceed by devissage.

By localising, we may assume that the morphism $(\tilde{Y},\sigma)\to(Y,\sigma)$ is directly finite \'etale, so we finish by \ref{dirim-finet}.
The complement is lower dimensional, so we proceed by devissage.
 
We end up with an almost direct Galois stratification on $Y$. 

%For the moreover case, when $Y$ is direct, we refine it to a direct Galois stratification using \ref{almdir2dir}.
\end{proof}

\begin{corollary}\label{cor-dirdirim}
In addition to assumptions of \ref{dirdirim-acfa}, let $(R,\varsigma)$ be the inversive closure of a transformal domain of finite $\varsigma$-type over a difference field or over $\Z$. Then we have the following.
\begin{enumerate}
\item There exist a finite $\varsigma$-localisation $R'$ of $R$ and an almost direct Galois stratification $\cB$ on $Y$ over $R'$ such that 
$$
f_{\exists}^\sharp\cA\equiv_{(R',\varsigma)}^{\text{\acfa}}\cB^\sharp.
$$
\item If $(R,\varsigma)$ is over $\Z$, there exist a finite $\varsigma$-localisation $R'$ of $R$, an almost direct Galois stratification $\cB$ on $Y$ over $R'$ and a positive integer $N$ such that 
$$
f_{\exists}^\sharp\cA\equiv_{(R',\varsigma)}^{\frob,N}\cB^\sharp.
$$
\end{enumerate}
\end{corollary}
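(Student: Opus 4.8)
The plan is to deduce both statements from Theorems~\ref{dirdirim-acfa} and~\ref{dirdirim-frob} by a d\'evissage over the base difference scheme $\mathcal{S}=\spec^\varsigma(R)$, exploiting the inversivity of $R$ to absorb the roots $R_{-n}$ and then to restore the full base ring in place of the localisation produced by those theorems.

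First, since $(R,\varsigma)$ is inversive, $\varsigma$ is an automorphism and hence $R_{-n}=\varsigma^{-n}(R)=R$ for every $n$. Consequently the stratification furnished by~\ref{dirdirim-acfa} is already defined over a localisation $R'$ of $R$ itself, and we obtain $f_\exists^\sharp\cA\equiv^{\text{\sc acfa}}_{(R',\varsigma)}(\cB')^\sharp$ for some stratification $\cB'$, an equivalence valid at every closed point lying in the open locus $\mathcal{U}=\spec^\varsigma(R')$ of $\mathcal{S}$. It remains to account for the closed points in the complement $\mathcal{S}\setminus\mathcal{U}$.

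Next, I would proceed by Noetherian induction on the dimension of $\mathcal{S}$, which is finite and well-founded because $R$ is the inversive closure of a transformal domain of finite $\varsigma$-type over a difference field or over $\Z$. The complement $\mathcal{S}\setminus\mathcal{U}$ is a proper closed difference subscheme, a finite union of loci cut out by $\varsigma$-prime ideals $\mathfrak{j}$; each residue transformal domain $R/\mathfrak{j}$ has strictly smaller dimension, and its inversive closure $(R/\mathfrak{j})^{\mathop{\rm inv}}$ is again of finite $\varsigma$-type over the same base, so the induction hypothesis applies to the base changes of $f$ and $\cA$ over it. Since every algebraically closed (respectively existentially closed) difference field is itself inversive, a closed point $s$ of $\mathcal{S}$ in the complement, together with such an extension $(F,\varphi)$ of $(\kk(s),\varsigma^s)$, factors through a closed point of $\spec^\varsigma((R/\mathfrak{j})^{\mathop{\rm inv}})$; hence the equivalences obtained over the inversive closures transport back to $R$ at the remaining closed points.

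Finally, I would glue. A direct Galois stratification is a partition of $Y$ into directly integral normal locally closed pieces, and the base parameter of each geometric point is recovered from its image in $\mathcal{S}$; combining the stratification $\cB'$ over $\mathcal{U}$ with those obtained over the lower-dimensional complement therefore assembles into a single stratification $\cB$ over $R$ whose specialisation at every closed point of $\mathcal{S}$ agrees with $f_\exists^\sharp\cA$, giving $f_\exists^\sharp\cA\equiv^{\text{\sc acfa}}_{(R,\varsigma)}\cB^\sharp$. For part~(2) one runs the identical induction using~\ref{dirdirim-frob} and sets $N$ to be the maximum of the finitely many integers it produces along the way. The main obstacle I anticipate is the bookkeeping of this gluing---verifying that stratifications living over different open and closed loci of the base, with their parameters and roots absorbed via inversive closures, genuinely assemble into one legitimate direct Galois stratification over $R$---together with confirming that the dimension-induction on these finite-$\varsigma$-type difference schemes is truly well-founded and that the matching of closed points across $\mathcal{S}$, $\mathcal{U}$, and the inversive closures of the complementary quotients transports the equivalences without loss.
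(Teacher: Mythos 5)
Your proposal is correct and follows essentially the same route as the paper: inversivity of $(R,\varsigma)$ absorbs the rings $R_{-n}$, the finite $\varsigma$-type hypothesis makes $\spec^\varsigma(R)$ a noetherian topological space (the Ritt property), Theorems~\ref{dirdirim-acfa} and~\ref{dirdirim-frob} supply $\cB$ over an open dense subset, and noetherian induction on the closed complement (taking the maximum of the finitely many bounds $N$ in the Frobenius case) completes the argument. Your extra bookkeeping—factoring closed points through inversive closures of the quotients $R/\mathfrak{j}$ and assembling the locally closed pieces into one stratification—merely spells out details the paper's proof leaves implicit.
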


\subsection{Quantifier elimination for direct Galois formulae}

\begin{theorem}\label{dirqe}
Let $(R,\varsigma)$ be the inversive closure of a transformal domain of finite $\varsigma$-type over a difference field or $\Z$. Let $\theta(x)=\theta(x;a)$ be a first order formula in the language of difference rings in variables 
$x=x_1,\ldots,x_n$ with parameters $a$ from $(R,\varsigma)$. Then we have the following.
\begin{enumerate}
\item There exists a direct Galois 
stratification $\cA$ of the difference affine $n$-space over a finite $\varsigma$-localisation $(R',\varsigma)$ of $R$ such that 
$$
\theta^\sharp\equiv_{(R',\varsigma)}^{\text{\acfa}}\cA^\sharp.
$$
\item If $R$ is over $\Z$, there exists a direct Galois 
stratification $\cA$ of the difference affine $n$-space over a finite $\varsigma$-localisation $R'$ of $R$ and a positive integer $N$ such that 
$$
\theta^\sharp\equiv_{(R',\varsigma)}^{\frob,N}\cA^\sharp.
$$
\end{enumerate}
\end{theorem}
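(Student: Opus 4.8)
The plan is to prove the statement by induction on the structure of the first-order formula $\theta$, using the Boolean algebra of direct Galois formulae to dispatch the propositional connectives and the direct image theorem \ref{cor-dirdirim} to dispatch the quantifiers. The entire substance of the argument is concentrated in the existential step, which is literally an instance of taking a direct image along a coordinate projection; everything else is bookkeeping we have already prepared. Throughout, I would work on a prolongation of difference affine space of order $d$ equal to the maximal order of $\varphi$ appearing in $\theta$, so that the displayed variables $x$ are recovered as the first $n$ coordinates and all higher iterates are carried as auxiliary coordinates.

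First I would treat the base case of atomic, and then quantifier-free, formulae. An atomic $(R,\varsigma)$-formula is a difference-polynomial equation $P(x,\varphi x,\ldots,\varphi^d x)=0$. Introducing auxiliary coordinates $z_0,\ldots,z_d$ with the intended reading $z_j=\varphi^j(x)$ and imposing the shift relations $\varphi(z_j)=z_{j+1}$ as closed conditions on the correspondence $X_1\subseteq X_0\times X_{0\varsigma}$, the equation $P=0$ becomes a closed subvariety of $X_0$. After the direct decomposition \ref{direct-decomp} into $H$-direct pieces and a passage to normal locally closed strata (as in \ref{loc-improvement}), each piece carries the trivial direct Galois stratification $\langle X,X/X,\{\sigma\}\rangle$ whose realisation functor computes exactly the solution locus on the prolongation; the forgotten coordinates, being determined by $x$, are eliminated by a \emph{bijective} (hence harmless, giving $\equiv$ outright) direct image down to $\Af^n_{(R,\varsigma)}$. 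Since the direct Galois formulae on a fixed presentation form a Boolean algebra closed under $\land$, $\lor$ and $\lnot$ once one passes to a common inflation and refinement (\ref{inflation-refinement}), every Boolean combination of such atomic pieces — that is, every quantifier-free formula — is again equivalent to a direct Galois formula.

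For the inductive step, conjunction, disjunction and negation are handled by the Boolean operations. For the existential quantifier, suppose $\theta(x)=\exists y\,\psi(x,y)$ with $y$ a single variable, and suppose inductively that $\psi^\sharp\equiv^{\text{\sc acfa}}_{(R,\varsigma)}\cA^\sharp$ for a direct Galois stratification $\cA$ on the prolongation of $\Af^{n+1}_{(R,\varsigma)}$. Writing $f$ for the coordinate projection that forgets $y$ and its iterates, one has $\theta^\sharp=f_\exists^\sharp\psi^\sharp$, and \ref{cor-dirdirim} produces a direct Galois stratification $\cB$ with $f_\exists^\sharp\cA^\sharp\equiv^{\text{\sc acfa}}_{(R,\varsigma)}\cB^\sharp$; it is \emph{direct} rather than merely almost direct by the \emph{moreover} clause of \ref{dirdirim-acfa}, since $\Af^n$ is direct. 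The universal quantifier is obtained as $\forall y=\lnot\exists y\,\lnot$. The Frobenius variant of part (2) runs identically, invoking the $\equiv^{\text{\sc Frob},N}$-clause of \ref{cor-dirdirim} and absorbing the finitely many integers produced along the induction into a single $N$.

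The main obstacle has already been surmounted: it is the direct image theorem \ref{cor-dirdirim}, whose proof through baby Stein factorisation, pushforward of Galois covers (\ref{pfwd-submr}), and Axiom H / twisted Lang--Weil is where the model-theoretic content of ACFA and of fields with Frobenius genuinely enters. Granting it, the remaining care is purely organisational: one must keep the base ring under control — the localisations of $R_{-n}$ appearing in \ref{dirdirim-acfa} are innocuous because $(R,\varsigma)$ is inversive and Ritt, so the noetherian induction already performed inside \ref{cor-dirdirim} furnishes a single stratification valid over all of $\spec^\varsigma(R)$ — and one must check at each step that directness of the presentations is preserved, so that the stratification $\cA$ delivered at the end is a genuine direct Galois stratification of difference affine $n$-space.
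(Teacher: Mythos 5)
Your proposal is correct and follows essentially the same route as the paper's (deliberately omitted) proof: a standard induction on the complexity of $\theta$, with connectives dispatched by the Boolean algebra of direct Galois formulae and the existential quantifier eliminated by applying \ref{cor-dirdirim} to a projection morphism, the Frobenius case running in parallel via the $\equiv^{\text{\sc Frob},N}$ clause. One small caveat: fixing the prolongation order $d$ at the outset is slightly too rigid, since eliminating a quantifier can raise the order needed on the target (e.g.\ $\exists y\,(y=\varphi x \wedge \varphi y = x)$ has order $1$ but its projection is $\varphi^2x=x$), so your final ``bijective direct image down to $\Af^n$'' should be understood as landing on a suitable prolongation presentation of difference affine $n$-space rather than the naive order-$0$ one — harmless here, because the theorem does not fix the presentation and \ref{cor-dirdirim} is invoked as a black box exactly as in the paper.
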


\begin{proof}
The proof of the theorem is purely formal from \ref{cor-dirdirim} (where the difficult work was done) by induction on the complexity of $\theta(x)$. The crucial step is the elimination of the existential quantifier, where we apply \ref{cor-dirdirim} to a projection morphism.
It is analogous to the derivation of 3.26 from 3.23 in \cite{ive-tgsacfa} so we omit it. 
\end{proof}

\begin{remark}
The following is a logician's way of interpreting the statements of \ref{dirqe}.
\begin{enumerate}[wide]
\item The class of definable $(R,\varsigma)$-subassignments is $\equiv_{(R,\sigma),\gen}^{\text{\acfa}}$-equivalent to the class of direct Galois formulae over $(R,\varsigma)$. 
%In particular, Galois parameter-free formulae are equivalent to first-order formulae modulo the theory ACFA.
\item   The class of definable $(R,\varsigma)$-subassignments is $\equiv_{(R,\sigma),\gen}^{\frob,\infty}$-equivalent to the class of direct Galois formulae over $(R,\varsigma)$. %In particular, parameter-free Galois formulae are equivalent to first-order formulae modulo the theory $T^\infty$ of first-order sentences true in all difference fields $(\bar{\F}_p,\varphi_q)$, for a sufficiently large $q$.
\end{enumerate} 
\end{remark}

\begin{remark}
It is possible to stratify $\spec^\varsigma(R)$ into locally closed pieces so that the conclusions of \ref{cor-dirdirim} and \ref{dirqe} hold over each piece. In other words, a Galois-type formula can match a first-order formula over \emph{every} point of $\spec^\varsigma(R)$.

Indeed, the assumptions on $(R,\varsigma)$ ensure that it is Ritt, i.e., that $\spec^\varsigma(R)$ is a noetherian topological space with Zariski topology induced from $\spec(R)$.  Theorems \ref{dirdirim-acfa} and \ref{dirdirim-frob} ensure that a suitable $\cB$ can be found on an open dense subset of $\spec^\varsigma(R)$, so we can proceed by noetherian induction on the closed complement. 
\end{remark}

\subsection{Sentences}

\begin{remark}\label{sent-base-field}
Let $\theta$ be a first-order sentence over an inversive difference field $(k,\varsigma)$.
Let $S$ denote the (trivial) direct presentation associated with $\spec(k,\varsigma)$. By \ref{dirqe}, there exists a direct Galois stratification $\cA$ on $S$ so that $\theta$ is equivalent to the Galois formula $\cA^\sharp$. Since $\spec(k)$ is a point, $\cA$ consists of a single direct Galois cover, 
$$
\cA=\langle S,(Z,\Sigma)/(S,\varsigma),C\rangle.
$$

\begin{enumerate}
\item If $C\neq\emptyset$, there exists an existentially closed difference field $(F,\varphi)$ extending $(k,\varphi)$ with $(F,\varphi)\models\theta$, or, equivalently,
$$
\cA^\sharp(F,\varphi)\neq\emptyset.
$$

\item The sentence $\theta$ belongs to the theory $\acfa_{(k,\varsigma)}$ if and only if $C=\Sigma$.
\end{enumerate}
\end{remark}

\begin{proof}
Note that $Z=(Z_0,Z_1)=(\spec(L_0),\spec(L_1))$, where $L_0$ and $L_1$ are finite Galois extensions of $k$ with $L_0\to L_1$. 
 If $\sigma\in\Sigma$, the solid arrows in the diagram
$$
\begin{tikzpicture} 
\matrix(m)[matrix of math nodes, row sep=2.5em, column sep=2.5em,text height=1.3ex, text depth=0.25ex]
{       
 |(u1)|{\bar{k}}		& |(u2)|{\bar{k}}	\\  %[1.5em]
 |(m1)|{L_0} 		& 	|(m2)|{L_1}		\\
  |(d1)|{k} 		& |(d2)|{k}            		\\}; 
\path[->,font=\scriptsize,>=to, thin]
(u1) edge [dashed] node[above]{$\bar{\sigma}$} (u2) 
(m1) edge  node[above]{$\sigma$} (m2) edge (u1)
(d1) edge  node[above]{$\varsigma$} (d2) edge (m1)
(m2) edge (u2)
(d2) edge (m2)
;
\end{tikzpicture}
$$ 
show the given data upon fixing embeddings $L_0\subseteq L_1\subseteq\bar{k}$ into the algebraic closure of $k$. Using the classical extension theorem \cite[Theorem~V.2.8]{lang}, we can lift $\sigma$ to a dashed arrow $\bar{\sigma}$ on $\bar{k}$, and then we can embed $(\bar{k},\bar{\sigma})$ into an existentially closed difference field $(F,\varphi)$. Thus, if $\sigma\in C$, $\cA^\sharp(F,\varphi)\neq\emptyset$, as claimed in (1).

For (2), if $C\neq\Sigma$, then (1) gives an existentially closed difference field $(F,\varphi)$ extending $(k,\varsigma)$ such that $(F,\varphi)\models \lnot\theta\equiv\langle Z/S, \Sigma\setminus C\rangle^\sharp$. 
\end{proof}

\begin{remark}\label{sent-over-Z}
Let $(R,\varsigma)$ be either $(\Z[1/n],\id)$ for some non-zero integer $n$, or $(\F_q,\varphi_r)$ for some powers $q$ and $r$ of a prime $p$. Let $\theta$ be a first-order sentence over $(R,\varsigma)$. % and write $S$ for $\spec(R,\varsigma)$ considered as an object in $\sadir_{(R,\varsigma)}$. 

By \ref{dirqe}, there is a finite localisation $R'$ of $R$, an integer $N>0$ and a basic Galois stratification on $S'=\spec(R',\varsigma)$ as an object in $\sadir$
$$
\cA=\langle S',(Z,\Sigma)/(S',\varsigma),C\rangle.
$$
such that
$$
\theta^\sharp\equiv^{\frob,N}_{(R',\varsigma)}\cA^\sharp.
$$ 
\begin{enumerate}
\item If $C\neq\emptyset$ then for every $\varsigma$-localisation $R''$ of $R'$ there exists a closed point $s\in\spec^\sigma(R'')$ such that for infinitely many finite fields $k$ with $(\bar{k},\varphi_k)$ extending $(\kk(s),\varsigma^s)$ and $|k|>N$, we have $$(\bar{k},\varphi_k)\models\theta_s.$$ % or, equivalently,
%$$
%\cA^\sharp(\bar{k},\varphi_k)\neq\emptyset.
%$$

\item The sentence $\theta$ belongs to the theory $T^{\infty}_{(R,\varsigma),\gen}$ if and only if $C=\Sigma$.
\end{enumerate}
\end{remark}
\begin{proof}
When $(R,\varsigma)=(\F_q,\varphi_r)$, then $R'=R$ and we argue as in \ref{sent-base-field} for $(k,\varsigma)=(\F_q,\varphi_r)$. Since $L_0$ and $L_1$ are finite, it follows that $\sigma$ is a power of Frobenius on $L_0$, and the relevant diagram in \ref{sent-base-field} can be completed by the infinitely many powers of Frobenius $\bar{\sigma}$ which restrict to $\sigma$ on $L_0$.

 In the case $(R,\varsigma)=(\Z[1/n],\id)$, it follows that $R'=\Z[1/n']$ where $n$ divides $n'$. Given that the difference operator on $S'$ is the identity, the Galois cover $(Z,\Sigma)/(S',\id)$ is associated with particularly simple diagrams indexed by $\sigma\in\Sigma$ 
$$
 \begin{tikzpicture} 
\matrix(m)[matrix of math nodes, row sep=2em, column sep=1.5em, text height=1.5ex, text depth=0.25ex]
 {
 |(l2)|{Z_0}	& |(l3)|{Z_1} 	& |(l4)|{Z_0}\\
 			& |(3)|{S'} 			& 		\\
 }; 
\path[->,font=\scriptsize,>=to, thin]
(l2) edge (3)
(l4) edge (3)
%(3) edge  (l2) edge node[above]{${\tilde{\sigma}}$}   (l4)
(l3) edge node[above]{${\pi_1}$} (l2) edge node[above]{${\sigma}$} (l4)
(l3) edge (3);
\end{tikzpicture}
$$
where $Z_0/S$ and $Z_1/S$ are Galois covers with groups $G_0$ and $G_1$. Since $Z_1$ dominates $Z_0$, the map $\lexp{\pi_1}{()}:G_1\to G_0$ is surjective. For each $\sigma\in\Sigma$, \ref{cor-to-bbk-lemma} gives that $\sigma=g_0\pi_1$ for some $g_0\in G_0$.

Hence, if $\sigma\in C$, 
we conclude that $C=C_0\pi_1$ with $C_0$ a conjugacy domain in $G_0$ containing $g_0$. Moreover, finding a point $z\in(Z,\Sigma)(F,\varphi)$ with $\varphi_z=\sigma$ reduces to finding a point $s\in S'$ with local Frobenius substitution with respect to the cover $Z_0/S'$ contained in $C_0$. The classical Chebotarev density theorem gives a non-zero density of primes $s$ with that property, which proves (1).

For (2), if $C\neq\Sigma$, then (1) applied to $\lnot\theta\equiv\langle Z/S,\Sigma\setminus C\rangle$ shows that $\theta\notin T^{\infty}_{(R,\varsigma),\gen}$.

\end{proof}

%\begin{remark}
%\begin{enumerate}
%\item Remark~\ref{sent-base-field} applied to $(k,\varsigma)=(\Q,\id)$ and $(k,\varsigma)=(\F_p,\id)$ gives a decision procedure for the most commonly considered theories $\acfa_0$ and $\acfa_p$ in model theory.
%\item Remark~\ref{sent-over-Z} applied to $(R,\varsigma)=(\Z,\id)$ and $(R,\varsigma)=(\F_p,\id)$ gives a decision procedure for the theories $T^\infty_0$ of sentences true in all difference fields $(\bar{k},\varphi_k)$ of all characteristics for sufficiently large finite field $k$ and $T^\infty_p$ of of sentences true in all difference fields $(\bar{F}_p,\varphi_q)$ for sufficiently large $q$.
%\end{enumerate}
%\end{remark}

\begin{corollary}
With notation of \ref{class-theories}, we have
$$
\acfa_0=T^\infty_0\ \ \ \text{ and }\ \ \ \acfa_p=T^\infty_p.
$$
\end{corollary}
\begin{proof}
A sentence $\theta$ over $\Q$ makes sense over some $\Z[1/n]$. Galois stratification for fields with Frobenius will produce a Galois formula $\cA$ over some $\Z[1/n']$ with $n|n'$ which is equivalent to $\theta$ over fields with high enough power of Frobenius. On the other hand, the stratification procedure for existentially closed difference fields yields the same Galois formula $\cA$. The criteria for a Galois sentence belonging to $\acfa_0$ and $T^\infty_0$ are exactly the same, so we conclude that $\theta\in\acfa_0$ if and only if $\theta\in T^\infty_0$. We argue similarly in characteristic $p$.
\end{proof}

\section{Effective quantifier elimination}\label{s:eff-qe}

\subsection{Effective (direct difference) algebraic geometry}
\begin{definition}
\begin{enumerate}%[wide]
\item A ring $R$ is \emph{primitive recursive}, if (modulo some
G\"odel numbering), $R$ is a primitive recursive set and the operations of addition,
multiplication, multiplicative inverse are primitive recursive functions.

\item A primitive recursive field $k$ has a \emph{splitting algorithm} if there is a primitive recursive algorithm for factoring elements of $k[T]$ into irreducible factors.
\item A primitive recursive field $k$ has \emph{elimination theory} if every finitely generated (explicitly presented) extension of $k$ has a splitting algorithm.

\item A field $k$ is called \emph{effective}, if it is primitive recursive and $k$ is perfect with a splitting algorithm.

\item A domain $R$ is \emph{effective}, if its fraction field $K$ is effective, $R$ is primitive recursive, and $R$ is a primitive recursive subset of $K$.
\end{enumerate}
\end{definition}

\begin{definition}
\begin{enumerate}%[wide]
\item 
An inversive difference ring $(R,\varsigma)$ is \emph{primitive recursive}, if $R$ is a primitive recursive ring, and the difference operator $\varsigma$ and its inverse $\varsigma^{-1}$
are primitive recursive functions.

\item An inversive difference field $(k,\varsigma)$ is called \emph{effective}, if it is primitive recursive and $k$ is effective.

\item An inversive transformal domain $(R,\varsigma)$ is \emph{effective}, if its fraction field $(K,\varsigma)$ is effective, $(R,\varsigma)$ is primitive recursive, and $R$ is a primitive recursive subset of $K$.
\end{enumerate}
\end{definition}

\begin{example}
The transformal domains $(\Z,\id)$, $(\Q,\id)$, $(\F_q,\varphi_p)$ are effective.
\end{example}

\begin{definition}
Let $(R,\varsigma)$ be an effective transformal domain. 
\begin{enumerate}
\item We say that an algebraic variety $V$ over $R$ is \emph{effectively presented} if $V$ is of finite presentation over $R$ and its presentation is explicitly given, and similarly for morphisms.
\item An object $(X,\Sigma)$ of $\vadir_{(R,\varsigma)}$ is \emph{effectively presented} if $X_0$, $X_1$, $X_{0\varsigma}$ and all the morphisms $\pi_1:X_1\to X_0$, $\pi_2(\sigma):X_1\to X_{0\varsigma}$, $\sigma\in\Sigma$, are effectively presented.  We make an analogous definition for morphisms in $\vadir_{(R,\varsigma)}$.
\item A normal almost direct Galois stratification $\cA=\langle X, Z_i/X_i, C_i\, |\, i\in I\rangle$ is \emph{effectively presented} if the base 
$(X,\sigma)$ is effectively presented and all the pieces $Z_i$, $X_i$ are  affine normal and effectively presented.
\end{enumerate}
\end{definition}

\begin{remark}\label{effective-ag}
By \cite[19.2.10]{fried-jarden}, an effective field $k$ has elimination theory and it can serve as a base field for a well-behaved and well-understood \emph{effective/constructive algebraic geometry}. Indeed, by the detailed treatments in \cite{seidenberg}, \cite{fried-jarden}, \cite{orgogozo}, the following operations on effectively presented algebraic varieties over $k$ are known to be primitive recursive:
\begin{enumerate}
\item computing fibre products;
\item decomposing a variety into irreducible components;
\item computing the image of a morphism;
\item computing the relative algebraic closure;
\item computing the loci of flatness/smoothness/\'etaleness/geometrically connected fibres of a morphism;
\item normalisation of a (normal) integral variety in an extension of its function field;
\item computation of Galois groups, Galois closure and decomposition subgroups in a given Galois cover.
\end{enumerate}
Moreover, if the input data for the above algorithms is given over an effective ring $R$, the algorithms can effectively compute an element $f\in R$ so that the output data is defined over the localised ring $R_f$.
\end{remark}

\begin{remark}\label{effective-dag}
Given an effective difference field $(k,\varsigma)$, the operations in $\vadir_{(k,\varsigma)}$ reduce to classical operations on algebraic varieties over $k$, so we automatically obtain a rich framework for \emph{effective direct difference algebraic geometry}.
\end{remark}

\subsection{Effective quantifier elimination for ACFA}

\begin{theorem}\label{effdirqe}
Let $\theta(x)=\theta(x;a)$ be a first order formula in the language of difference rings in variables 
$x=x_1,\ldots,x_n$ with parameters $a$ from an effective difference field $(k,\varsigma)$. 
A primitive recursive procedure can compute an effectively presented direct Galois 
stratification $\cA$ of the difference affine $n$-space over $(k,\varsigma)$ such that 
$$
\theta^\sharp\equiv_{(k,\varsigma)}^{\acfa}\cA^\sharp.
$$
%\item If $R$ is over $\Z$, there exists a Galois 
%stratification $\cA$ of the difference affine $n$-space over $R$ and a positive integer $N$ such that 
%$$
%\theta^\sharp\equiv_{(R,\varsigma)}^{\frob,N}\cA^\sharp.
%$$
\end{theorem}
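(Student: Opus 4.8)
The plan is to revisit the proof of the quantifier elimination result \ref{dirqe} and to certify that each of its constituent operations is carried out by a primitive recursive procedure over the effective difference field $(k,\varsigma)$. The proof of \ref{dirqe} proceeds by induction on the complexity of $\theta(x)$, reducing to three cases: atomic (quantifier-free) difference-polynomial formulae, Boolean combinations, and the elimination of a single existential quantifier. Throughout, the crucial observation is that, by the remark following the definition of an effective field, the underlying field $k$ has elimination theory in the sense of \cite{fried-jarden}, so that every explicitly presented finitely generated extension of $k$ comes equipped with a splitting algorithm; since the operations in $\vadir_{(k,\varsigma)}$ reduce to classical constructions on algebraic varieties and correspondences over such fields, they fall within the scope of effective/constructive algebraic geometry and are therefore primitive recursive. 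The primitive recursiveness of $\varsigma$ and $\varsigma^{-1}$ is what allows one to pass effectively between $R$ and the rings $R_{-n}$ that arise when tracking parameters.

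The base and Boolean cases are routine to certify. An atomic formula $p(x,\varsigma x,\ldots,\varsigma^m x)=0$ translates directly into an effectively presented direct presentation, whose associated $\top$ and $\bot$ Galois stratifications are immediate; conjunction, disjunction and negation are then computed by passing to a common refinement and inflation (\ref{inflation-refinement}) and performing the prescribed set-theoretic operations on conjugacy domains. The refinement and inflation themselves involve only fibre products, decompositions into direct components, normalisations, and the computation of decomposition groups of components, all of which are primitive recursive over $k$, while the operations on conjugacy domains are finite combinatorial manipulations in the relevant Galois groups.

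The heart of the argument, and the step demanding the most care, is the existential case, which applies \ref{cor-dirdirim} (through \ref{dirdirim-acfa}) to a projection morphism. I would traverse the proof of \ref{dirdirim-acfa} and exhibit each construction as primitive recursive: the direct irreducible decomposition \ref{direct-decomp} reduces to computing irreducible components, Zariski closures of images and intersections; the localisations improving a morphism to directly universally submersive, finite \'etale, or smooth (\ref{loc-improve}, \ref{loc-improvement}) rest on generic flatness, generic smoothness and the constructibility of geometric integrality; the baby Stein factorisation and the direct Galois closure \ref{dirgalcl} reduce to computing relative algebraic closures of function fields, normalisations in them, Galois closures of finite \'etale covers together with their Galois groups, and fibre products and their components. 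For the pushforward \ref{pfwd-submr} I would use the explicit normalisation-in-relative-algebraic-closure construction of direct images of Galois covers given in \ref{dirimgal} (valid after localising so that everything is directly normal), thereby avoiding any appeal to the \'etale fundamental group and keeping the construction constructive. Finally, the transfer of conjugacy domains through \ref{galois-towers}, \ref{dirim-finet} and \ref{dirim-geom-conn}, the homomorphisms $()^{\pi_1}$ and $()^\sigma$, and the passage from almost direct to direct covers via \ref{almdir2dir}, are all either finite-group computations or further normalisations.

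The main obstacle I anticipate is not any individual construction but the control of the recursion as a whole. Two nested inductions occur: the devissage inside \ref{dirdirim-acfa}, which strictly lowers $\dim(X_1)$, and the noetherian induction on the closed complement in $\spec^\varsigma(R)$ inside \ref{cor-dirdirim}. One must verify that both terminate after a number of steps bounded primitive-recursively in the input, and that the dimensions, degrees, orders of the finite Galois groups, and the level $n$ of the base ring $R_{-n}$ produced at each stage remain primitive-recursively bounded. The delicate point is the effective computation of the Galois groups of the finite \'etale covers and of the decomposition groups appearing in \ref{dirgalcl} and \ref{inflation-refinement}: it is precisely here that the splitting algorithm and the elimination theory of $k$, together with the primitive recursiveness of $\varsigma^{\pm 1}$, guarantee that the combinatorial group data can be extracted primitive-recursively, so that the entire procedure composes to a primitive recursive one.
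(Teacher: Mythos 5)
Your overall strategy is the paper's own: audit the proof of \ref{dirqe}, bound its control loops, and reduce every geometric construction to the standard primitive recursive toolkit of effective algebraic geometry --- fibre products, irreducible decomposition, images of morphisms, relative algebraic closures, loci of flatness/smoothness/\'etaleness/geometric integrality of fibres, normalisation, and computation of Galois groups, Galois closures and decomposition subgroups --- which is exactly the paper's list. Your remark that the pushforward \ref{pfwd-submr} should be executed via the explicit normalisation-in-the-relative-algebraic-closure construction given at the end of \ref{dirimgal} (after arranging direct normality), rather than via the \'etale fundamental group, also matches the paper's intent, since the descent-theoretic proof is not constructive as stated.

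The one genuine gap is precisely where you declare your ``main obstacle'' and leave it open: the noetherian induction on the closed complement in $\spec^\varsigma(R)$ coming from \ref{cor-dirdirim}. You say one ``must verify'' that this loop terminates in a primitive-recursively bounded number of steps, but you provide no such bound, and none is readily available: the noetherianity of $\spec^\varsigma(R)$ for a Ritt ring is a pure existence statement, and bounding descending chains of $\varsigma$-closed sets is exactly the kind of difference-algebraic operation whose effective status the introduction flags as unknown. The paper avoids this problem rather than solving it: since the base in \ref{effdirqe} is a difference field $(k,\varsigma)$, which is moreover inversive, $\spec^\varsigma(k)$ is a single point, every $\varsigma$-localisation of $k$ and every ring $k_{-n}$ is $k$ itself, so \ref{dirdirim-acfa} applies on the nose and \ref{cor-dirdirim} is never invoked; you should route the existential step through \ref{dirdirim-acfa} directly. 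Once that is done, the only remaining loops are the induction on the complexity of $\theta$ and the devissage by dimension inside \ref{dirdirim-acfa}, and the single quantitative worry there --- the possible jump in the number of direct components produced by \ref{direct-decomp} on the excised bad loci at each step --- is settled not by your blanket (and unproved) assertion that all dimensions, degrees and group orders remain primitive-recursively bounded, but by citing Cohn's argument that the direct decomposition procedure is primitive recursive and Hrushovski's explicit bounds on the number of components in terms of the degrees of the correspondences involved (\cite{cohn}, \cite{udi-mm}).
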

\begin{proof}
The goal is to show that the algorithm can be described without reference to indefinite loops and unbounded searches, and that various induction proofs can in fact be transformed into procedures using bounded loops.

The outer loop, following the proof of \ref{dirqe}, is bounded by the complexity of $\theta(x)$, and the only nontrivial procedures it invokes are instances of \ref{dirdirim-acfa}, so it will suffice to show that taking direct images of an effective direct Galois stratification via \ref{dirdirim-acfa} is primitive recursive.
%(we avoid the extra loop from \ref{cor-dirdirim} by working over a difference field).

Now, \ref{dirdirim-acfa} is done by induction on dimension, so its main loop is bounded by dimensions of the varieties involved in the direct presentations $(X,\sigma)$ and $(Y,\sigma)$. The next possible problem is a possible jump in the number of direct components produced by the direct decomposition \ref{direct-decomp} on the `bad loci' of lower dimension excised at each step, but Cohn 
\cite[Solution to Problem I*, Chapter 8, no.~14]{cohn} already argued that the procedure is primitive recursive, and Hrushovski even gives explicit bounds for the number of components in terms of degrees of the correspondences involved in \cite[Proposition~2.2.1]{udi-mm}. 

There are no more dangerous control loops to consider, so it suffices to verify that all the algebraic-geometric constructions used in all the constituent steps of the proof of \ref{dirdirim-acfa} are primitive recursive. By inspection, all these operations reduce to the algorithms from \ref{effective-ag} and we are done.
%
%The following is the list of basic primitive recursive operations on algebraic varieties over $k$ which suffice to build up all the constructions used in the proof. They are all known to be primitive recursive, we refer the reader to the detailed treatments in \cite{seidenberg}, \cite{fried-jarden}, \cite{orgogozo}.
%\begin{enumerate}
%\item computing fibre products;
%\item decomposing a variety into irreducible components;
%\item computing the image of a morphism;
%\item computing the relative algebraic closure;
%\item computing the loci of flatness/smoothness/\'etaleness/geometrically connected fibres of a morphism;
%\item normalisation of a (normal) integral variety in an extension of its function field;
%\item computation of Galois groups, Galois closure and decomposition subgroups in a given Galois cover.
%\end{enumerate}
\end{proof}

\begin{corollary}\label{prdec}
Let $(k,\varsigma)$ be an effective difference field. The theory $\acfa_{(k,\varsigma)}$ of existentially closed difference fields extending $(k,\varsigma)$ is decidable by a primitive recursive procedure. 
%Moreover, a primitive recursive procedure can decide whether a first-order sentence %$\theta$ 
%with parameters from $(k,\varsigma)$ is consistent with   ${\rm ACFA}_{(k,\varsigma)}$.
\end{corollary}
\begin{proof}
We repeat the argument of \ref{sent-base-field} in an effective way.
Let $\theta$ be a sentence with parameters in $(k,\varsigma)$.
% and let $$ be the (trivial) direct presentation associated with the spectrum of $(k,\varsigma)$. %(as in \ref{dirpts}). 
Using \ref{effdirqe}, a primitive recursive procedure can compute a direct Galois stratification $\cA$ on $S=\spec(k,\varsigma)\in\sadir_{(k,\varsigma)}$ so that $\theta$ is equivalent to the Galois formula $\cA^\sharp$. Since $\spec(k)$ is a point, $\cA$ consists of a single direct Galois cover, 
$$
\cA=\langle X,(Z,\Sigma)/(X,\sigma),C\rangle.
$$
The sentence $\theta$ is entailed by $\acfa_{(k,\varsigma)}$ if and only if $C=\Sigma$, and this can be checked by a primitive recursive procedure. 

%Note that $\theta$ is consistent with ${\rm ACFA}_{(k,\varsigma)}$ if and only if $C\neq\emptyset$.
\end{proof}

\begin{corollary}
The theories $\acfa_0$ and $\acfa_p$ are primitive recursive decidable.
\end{corollary}

\subsection{Effective quantifier elimination for fields with powers of Frobenius}

\begin{theorem}\label{effdirqe-frob}
Let $\theta(x)$ be a first order formula in the language of difference rings in variables 
$x=x_1,\ldots,x_n$ over an effective transformal domain $(R,\varsigma)$ of finite $\varsigma$-type over $\Z$. 
A primitive recursive procedure can compute an effectively presented direct Galois 
stratification $\cA$ of the difference affine $n$-space over a $\varsigma$-localisation $(R',\varsigma)$ of $R$ and a positive integer $N$ such that 
$$
\theta^\sharp\equiv_{(R',\varsigma)}^{\frob,N}\cA^\sharp.
$$
%\item If $R$ is over $\Z$, there exists a Galois 
%stratification $\cA$ of the difference affine $n$-space over $R$ and a positive integer $N$ such that 
%$$
%\theta^\sharp\equiv_{(R,\varsigma)}^{\frob,N}\cA^\sharp.
%$$
\end{theorem}
\begin{proof}
An essential ingredient of the proof is the effectivity of Hrushovski's bound needed for \ref{twist-LW}. It is argued in \cite{udi} that a primitive recursive procedure can compute a $\varsigma$-localisation of $R$ and an integer $N>0$ so that \ref{twist-LW} holds.
 
The rest of the proof is analogous to \ref{effdirqe}, bearing in mind the effective algebraic geometry over an effective domain in which every operation is done modulo a localisation as in \ref{effective-ag}.
\end{proof}

\begin{corollary}
Let $(R,\varsigma)$ be $(\Z[1/n],\id)$ for a non-zero integer $n$ or $(\F_q,\varphi_r)$ for some prime powers $q$ and $r$.
The theory $T^{\infty,gen}_{(R,\varsigma)}$ is decidable by a primitive recursive procedure. Moreover, given a $\theta\in T^{\infty,gen}_{(R,\varsigma)}$, a primitive recursive procedure can compute the (finite) list of exceptional cases consisting of
\begin{enumerate}
\item points $s\in\spec^\varsigma(R)$ for which $\theta_s\notin T^\infty_{(\kk(s),\varsigma^s)}$;
\item pairs $(s,k)$ consisting of a point $s\in\spec^\varsigma(R)$ with $\theta_s\in T^\infty_{(\kk(s),\varsigma^s)}$ and a finite field $k$ with $(\bar{k},\varphi_k)$ extending $(\kk(s),\varsigma^s)$ with $(\bar{k},\varphi_k)\not\models\theta_s$.
\end{enumerate}

\end{corollary}
\begin{proof}
In view of  \ref{effdirqe-frob}, the argument of \ref{sent-over-Z} becomes effective. Given a sentence $\theta$ over $(R,\varsigma)$, a primitive recursive procedure computes a $\varsigma$-localisation $R'$ of $R$, an integer $N>0$ and a basic Galois stratification on $S'=\spec(R',\varsigma)$
$$
\cA=\langle S',(Z,\Sigma)/(S',\varsigma),C\rangle.
$$
such that
$$
\theta^\sharp\equiv^{\frob,N}_{(R',\varsigma)}\cA^\sharp.
$$
Then $\theta\in T^{\infty}_{(R,\varsigma),\gen}$ if and only if $C=\Sigma$, which can be verified by a primitive recursive test. 

In the case where we started with $(R,\varsigma)=(\F_q,\varphi_r)$, and we are given a $\theta\in T^\infty_{(\F_q,\varphi_r)}$, potential exceptions must be sought among those finite fields $k$ of size at most $N$ for which $(\bar{k},\varphi_k)$ extends $(\F_q,\varphi_r)$ and we need to check whether $(\bar{k},\varphi_k)$ satisfies $\theta$ or not. 

For each of those $k$, substituting $\varphi_k$ for the difference operator in $\theta$ yields a first-order sentence in the language of rings on an algebraically closed field $\bar{k}$, and such statements can be decided by a well-known primitive recursive procedure for algebraically closed fields.

In the case of $R=\Z[1/n]$, the algorithm produced an explicit $n'$ with $n|n'$ so that $R'=\Z[1/n']$. In order to find the exceptional cases, it suffices to consider only the characteristics dividing $n'$. Thus, for each prime $p$ dividing $n'$, we apply the first part of the corollary to decide whether $\theta_p\in T^\infty_p$. If so, we use the above algorithm for listing the exceptional finite fields $k$ of characteristic $p$ for which $(\bar{k},\varphi_k)\not\models\theta_p$.  
\end{proof}

\begin{corollary}\label{prdec-frob}
The theories $T_0^\infty$ and $T_p^\infty$ are primitive recursive decidable. Moreover,
if $\theta\in T_0^\infty$, a primitive recursive procedure can compute the finite list of:
\begin{enumerate}
\item primes $p$ with $\theta_p\notin T_p^\infty$;
\item pairs $(p,q)$ such that $\theta_p\in T^\infty_p$ but $(\bar{\F}_p,\varphi_q)\not\models\theta_p$.
\end{enumerate} 
\end{corollary}

\subsection{Applications to effective difference algebra}

Our results have direct consequences for effective/constructive difference algebra, allowing us to, for example, rediscover the primitive recursiveness of the \emph{perfect ideal membership problem} (Cohn states that the problem can be solved by an `effective procedure', see 
\cite[Chapter 8, no.~14]{cohn}). Recall, an ideal $I$ in a difference ring is called \emph{perfect}, if $a\sigma(a)\in I$ implies that $a$ and $\sigma(a)$ are both in $I$. The \emph{perfect closure} $\{E\}$ of a set $E$ is the least perfect ideal containing $E$.  
\begin{corollary}\label{pr-perfidmem}
Let $(k,\varsigma)$ be an effective difference field. A primitive recursive procedure can decide, given difference polynomials $f,f_1\ldots,f_n$ over $(k,\varsigma)$, whether
$$
f\in\{f_1,\ldots,f_n\}.
$$ 
\end{corollary}
\begin{proof}
Following \cite[2.19]{ive-tch}, \cite[2.29]{ive-tgsacfa}, the following conditions are equivalent
\begin{enumerate}
\item $f\in\{f_1,\ldots,f_n\}$;
\item $V(f_1,\ldots,f_n)\subseteq V(f)$;
\item $\acfa_{(k,\varsigma)}\vdash \forall x_1\cdots \forall x_m \bigwedge_{i\leq n} f_i(x_1,\ldots,x_m)=0 \rightarrow f(x_1,\ldots,x_m)=0$,
\end{enumerate}
and the last condition is decidable by a primitive recursive procedure via \ref{prdec}.
\end{proof}
\appendix

\section{Directly presented difference schemes}\label{s:comp-dir-pres}

The goal of this Appendix is to show how our framework of direct presentations relates to the notion of \emph{directly presented difference schemes} from \cite{udi}, and assumes that the reader is familiar with basic notation and concepts of that paper. 

\begin{proposition}[{\cite[Sect.~4.3]{udi}}, {\cite[3.1, 3.2]{ive-tgs}}]
Let $(R,\varsigma)$ be a difference ring. 
\begin{enumerate}
\item The forgetful functor
from the category of difference $(R,\varsigma)$-algebras to the category of $R$-algebras
has a left adjoint $[\varsigma]_R$, i.e., for every $R$-algebra $A$ we have a homomorphism
$A\to[\varsigma]_RA$ inducing the functorial isomorphism
$$
\Hom_{(R,\varsigma)}([\varsigma]_RA,(C,\sigma))=\Hom_R(A,C),
$$
for every $(R,\varsigma)$-algebra $(C,\sigma)$.  

\item Let $X$ be an (algebraic) scheme over $R$. The functor
from the category of difference $(R,\varsigma)$-schemes to the category of sets,
$(Z,\sigma)\mapsto\Hom_R(Z,X)$ (morphisms of locally $R$-ringed spaces)
is representable. More precisely, we have a difference scheme $[\varsigma]_RX$ associated to $X$, equipped with the universal morphism
$[\varsigma]_RX\to X$ of locally $R$-ringed spaces, inducing a functorial isomorphism
$$
\Hom_R(Z,X)=\Hom_{(R,\varsigma)}((Z,\sigma),[\sigma]_RX),
$$
for every $(R,\sigma)$-difference scheme $(Z,\sigma)$.
% where the morphisms on the left are the morphisms of locally $R$-ringed spaces.   
\end{enumerate}
\end{proposition}

\begin{definition}\label{dir-pres-def}
Let $(R,\varsigma)$ be a difference ring. An $(R,\varsigma)$-algebra $(A,\sigma)$ is \emph{directly presented} if there exists an $(R,\varsigma)$-epimorphism $(P,\sigma)\to (A,\sigma)$ from some difference polynomial ring 
$(P,\sigma)=[\varsigma]_RR[\bar{x}]=R[\bar{x}]_\sigma$ whose kernel $I$ is $\sigma$-generated by $I\cap R[\bar{x},\sigma(\bar{x})]$. %$I\cap P_1$.
\end{definition}
Intuitively speaking, there is a choice of a tuple of generators $a\in A$ such that $A$ is $\sigma$-generated by $a$ over $R$ and the relations between the generators are all deduced from the relations between $a$ and $\sigma a$.

\begin{definition}[{\cite[6.1]{udi}}]
Let $(R,\varsigma)$ be a difference ring. Let $Y$ be an algebraic scheme over $R$ and let $Z$ be a closed subscheme of $Y\times_{\spec(R)}Y_\varsigma$. Let $\Gamma_\sigma$ denote the graph of $\sigma$ on $[\varsigma]_RY\times_{\spec^\varsigma(R)}[\varsigma]_RY_\varsigma$ and we define the difference scheme $$[\varsigma]_R(Y,Z)$$ as the projection to $[\varsigma]_RY$ of the difference scheme $[\varsigma]_RZ\cap\Gamma_\sigma$.
\end{definition}

\begin{remark}\label{dir-diff-corr}
With the above notation, let $(F,\varphi)$ be a difference field extending $(R,\varsigma)$. Given a point $y\in Y(F)$, clearly $y{\varphi}\in Y_\varsigma(F)$. We have
$$
[\varsigma]_R(Y,Z)(F,\varphi)=\{y\in Y(F):(y,y\varphi)\in Z(F)\},
$$ 
a familiar notion of a difference scheme defined by a correspondence $Z$ between an algebraic scheme $Y$ and its twist $Y_\varsigma$. %already successfully used in \cite{angus-gendif}.
\end{remark}

\begin{lemma} Let $\pi:R[\bar{x}]_\sigma\to(A,\sigma)$ be an epimorphism of difference algebras over a difference ring $(R,\varsigma)$. Let $\bar{a}=\pi(\bar{x})$ be an associated choice of $\sigma$-generators of $A$, and let $X_n$ be the projective system of Zariski closures constructed in \cite[Subsection~2.1]{ive-tgs}. The following conditions are equivalent:
\begin{enumerate}
\item $\pi$ is a direct presentation of $(A,\sigma)$ over $(R,\varsigma)$;
\item $X\simeq[\varsigma]_R(X_0,X_1)$. 
\end{enumerate}
\end{lemma}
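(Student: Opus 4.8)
The plan is to realise both $X$ and $[\varsigma]_R(X_0,X_1)$ as canonical closed difference subschemes of the prolongation $[\varsigma]_RX_0$, to compute their defining difference ideals, and to observe that these ideals coincide exactly when the direct-presentation condition of~\ref{dir-pres-def} holds. Write $P=R[x]_\sigma$ and $I=\ker\pi$, so $A=P/I$; for each $n$ put $P_n=R[x,\sigma x,\dots,\sigma^n x]$ and $I_n=I\cap P_n$, so that the Zariski closure $X_n$ is cut out by $I_n$ and $X=\specd(A)=\varprojlim_n X_n$. Since $I_0\subseteq I_1\subseteq I$ and $I$ is a $\sigma$-ideal, the $\sigma$-ideal $\langle I_1\rangle_\sigma$ of $P$ generated by $I_1=I\cap P_1$ satisfies $\langle I_1\rangle_\sigma\subseteq I$; here $\langle I_1\rangle_\sigma$ is generated as an ordinary ideal by $\{\sigma^k f:f\in I_1,\ k\ge 0\}$.

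First I would identify the coordinate difference algebra of $[\varsigma]_R(X_0,X_1)$ as $P/\langle I_1\rangle_\sigma$. By the functorial description in~\ref{dir-diff-corr}, a point $y$ lies in $[\varsigma]_R(X_0,X_1)(F,\varphi)$ if and only if $(y,y\varphi)\in X_1$, i.e.\ $h(y,\varphi y)=0$ for all $h\in I_1$. Applying $\varphi^k$ and using that $\varphi$ extends $\varsigma$ turns such a relation into the vanishing of $\sigma^k h$ at $y$, since $\sigma^k h$ is obtained from $h$ by shifting the variables and twisting the coefficients by $\varsigma^k$; taking $k=0$ gives the converse. Hence the level-$1$ vanishing of $I_1$ is equivalent to the vanishing of all of $\langle I_1\rangle_\sigma$, and together with the definition of $[\varsigma]_R(-,-)$ from~\cite{udi} this identifies $[\varsigma]_R(X_0,X_1)$ with the closed difference subscheme $\specd(P/\langle I_1\rangle_\sigma)$ of $[\varsigma]_RX_0$.

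With this identification, the inclusion $\langle I_1\rangle_\sigma\subseteq I$ furnishes a canonical surjection $P/\langle I_1\rangle_\sigma\twoheadrightarrow P/I=A$ and hence a canonical closed immersion $X\hookrightarrow[\varsigma]_R(X_0,X_1)$ over $[\varsigma]_RX_0$. This immersion is an isomorphism exactly when the surjection is injective, that is, exactly when $I=\langle I_1\rangle_\sigma$. Now $I=\langle I_1\rangle_\sigma$ means precisely that $I$ is $\sigma$-generated by $I\cap P_1$, which by~\ref{dir-pres-def} is the statement that $\pi$ is a direct presentation. This settles both implications simultaneously: assuming (1) gives $A=P/\langle I_1\rangle_\sigma$ and hence $X\simeq[\varsigma]_R(X_0,X_1)$, while an isomorphism as in (2) forces the canonical immersion to be an isomorphism, whence $I=\langle I_1\rangle_\sigma$ and (1) holds.

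The main obstacle is the scheme-theoretic identification $[\varsigma]_R(X_0,X_1)=\specd(P/\langle I_1\rangle_\sigma)$ and the requirement that the isomorphism in (2) correspond to genuine equality of defining $\sigma$-ideals rather than merely to equality of $(F,\varphi)$-point sets, which on its own would only yield equality of perfect (radical) $\sigma$-ideals. I would address this by reading (2) as equality of the two canonical closed difference subschemes of $[\varsigma]_RX_0$, so that both the level-$0$ datum $X_0$ and the level-$1$ datum $X_1$ are shared; under the reducedness convention for varieties this compares the perfect $\sigma$-ideals $I$ and $\sqrt[\sigma]{\langle I_1\rangle_\sigma}$, and the condition in~\ref{dir-pres-def} is to be interpreted at the same level, keeping the two assertions literally equivalent.
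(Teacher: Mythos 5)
The paper states this lemma without proof, treating it as the routine unwinding of Definition~\ref{dir-pres-def} against Hrushovski's construction, and your argument---identifying $[\varsigma]_R(X_0,X_1)$ with $\specd(P/\langle I_1\rangle_\sigma)$ as a closed difference subscheme of $[\varsigma]_RX_0$ and reducing (1)$\Leftrightarrow$(2) to the equality $I=\langle I_1\rangle_\sigma$ via the canonical surjection $P/\langle I_1\rangle_\sigma\twoheadrightarrow A$---is exactly that intended verification, and it is correct. Your closing caveat, that (2) must be read as equality of the two canonical closed subschemes of $[\varsigma]_RX_0$ so that one compares defining $\sigma$-ideals rather than mere $(F,\varphi)$-point sets (which would only determine perfect $\sigma$-ideals), correctly isolates the one genuinely delicate point and resolves it in the way the paper's conventions require.
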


\begin{remark}\label{strong-dirpres}
With notation from the previous lemma,  if we have that $X_{n+1}\simeq X_n\times_{X_{n-1,\varsigma}}X_{n,\varsigma}$ for $n\geq 1$, then $X$ is directly presented in a very strong sense.
\end{remark}

The following results illustrate how near an arbitrary difference scheme is to a directly presented one.
\begin{fact}
\begin{enumerate}[wide]%[align=left, leftmargin=*]
\item If $(R,\varsigma)\to(A,\sigma)$ is a morphism of transformal domains of finite $\sigma$-type, then a finite $\sigma$-localisation of $A$ is directly presented. This follows from a known result of difference algebra \cite[Theorem~3.2.6]{wibmer-dif} that a localisation of $A$ is finitely $\sigma$-presented over $A$, followed by a simple choice of a longer tuple of generators in order to get a direct presentation.

\item The `Preparation Lemma' from \cite[Subsection~2.1]{ive-tgs} yields the strong form of direct presentation through \ref{strong-dirpres}, provided we shrink both $A$ and $R$. 

\par
\item An affine or projective difference scheme of finite total dimension over $\Z$ or a difference field can be embedded as a closed subscheme into a directly presented scheme with the same underlying algebraically reduced well-mixed structure, see \cite[Corollary~4.36]{udi}.
\end{enumerate}
\end{fact}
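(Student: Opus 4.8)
The plan is to treat the three items independently, reducing each to a cited theorem supplemented, in the first case, by a regrouping of generators. For (1), I would fix a tuple of $\sigma$-generators $a=(a_1,\dots,a_m)$ of $(A,\sigma)$ over $(R,\varsigma)$ afforded by finite $\sigma$-type, yielding an epimorphism $\pi\colon P=R[x]_\sigma\to A$, $x\mapsto a$, with kernel $I$. Finite $\sigma$-type only guarantees that $I$ is a $\sigma$-ideal, not that it is finitely $\sigma$-generated; the cited Theorem~3.2.6 of \cite{wibmer-dif} repairs this after a finite $\sigma$-localisation of $A$, ensuring that the algebra acquires a finite $\sigma$-presentation, so that $I$ (for the correspondingly localised presentation) is $\sigma$-generated by finitely many $f_1,\dots,f_r$. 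Let $N$ be the maximal order occurring, i.e.\ the largest $k$ with $\sigma^k x$ present in some $f_j$. The decisive move is then to pass to the longer tuple $b=(a,\sigma a,\dots,\sigma^{N-1}a)$, introducing new coordinates $x^{(0)},\dots,x^{(N-1)}$ and the presentation $\pi'\colon P'=R[x^{(0)},\dots,x^{(N-1)}]_\sigma\to A$, $x^{(k)}\mapsto\sigma^k a$.

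It then remains, for (1), to check that the new kernel $I'$ is $\sigma$-generated by $I'\cap P'_1$, which is exactly the direct-presentation condition of Definition~\ref{dir-pres-def}. Indeed $I'$ contains the order-one relations $\sigma(x^{(k)})-x^{(k+1)}$ for $0\le k\le N-2$, while each $f_j$, of order $\le N$ in $x$, rewrites under the substitutions $\sigma^k x=x^{(k)}$ (for $k\le N-1$) and $\sigma^N x=\sigma(x^{(N-1)})$ as a polynomial $g_j$ of order $\le 1$ in the new variables, hence $g_j\in P'_1$. Modulo the relations $\sigma(x^{(k)})=x^{(k+1)}$ the ring $P'$ recovers $P$ and $I'$ recovers $I$; since the latter is $\sigma$-generated by the $f_j=g_j$, the ideal $I'$ is $\sigma$-generated by the $g_j$ together with the displayed order-one relations, all of which lie in $P'_1$.

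For (2), I would invoke the `Preparation Lemma' of \cite[Subsection~2.1]{ive-tgs}: after a simultaneous $\sigma$-localisation shrinking both $A$ and $R$, it forces the recursive fibre-product identities $X_{n+1}\simeq X_n\times_{X_{n-1,\varsigma}}X_{n,\varsigma}$ for $n\ge 1$ among the Zariski closures of the associated projective system, whereupon Lemma~\ref{strong-dirpres} upgrades the presentation to the strong direct form. For (3), the assertion is a transcription of \cite[Corollary~4.36]{udi}; it suffices to match conventions, observing that the embedding into a directly presented scheme is as a closed subscheme and preserves the underlying algebraically reduced well-mixed structure.

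The main obstacle is the order-reduction in (1): one must verify not merely that the defining equations can be replaced by order-one ones after adjoining prolongations, but that the \emph{whole} new kernel $I'$ is $\sigma$-generated by its degree-one part (so the presentation is genuinely direct, not just of low effective order), and that the finite $\sigma$-presentation supplied by \cite{wibmer-dif} is stable under both the enlargement of the generating tuple and the finite $\sigma$-localisation.
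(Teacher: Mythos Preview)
Your proposal is correct and follows exactly the approach indicated in the paper: the Fact carries no separate proof, and the justifications embedded in the statement are precisely the ones you reconstruct---Wibmer's finite $\sigma$-presentation result followed by the prolongation-of-generators trick for (1), the Preparation Lemma feeding into Lemma~\ref{strong-dirpres} for (2), and the direct citation of Hrushovski for (3). Your detailed verification of the order-reduction step in (1), showing that $I'$ is $\sigma$-generated by $I'\cap P'_1$, is a faithful and correct expansion of what the paper dismisses as ``a simple choice of a longer tuple of generators''.
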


Since directly presented difference schemes will mostly be used in the context where we will be interested only in their points with values in difference fields, we seek a framework that describes them suitably along the lines of \ref{dir-diff-corr}, and which is easily extended to generalised difference schemes.

\begin{remark}\label{dirprespts}
For a direct presentation $(X,\sigma)$, in view of \ref{dir-diff-corr} and \ref{dirpts}, we have that 
$$[\varsigma]_R(X_0,X_1)(F,\varphi)=(X,\sigma)^\sharp(F,\varphi)\simeq(X,\sigma)(F,\varphi).$$
 Thus, in considerations of difference field-valued points, we can neglect the distinction between a direct presentation and its associated difference scheme. %and we may even refer to the objects of $\sadir$ as `(almost) directly presented difference schemes'.
\end{remark}

\begin{remark}\label{dirgal-vs-gal}
Suppose that $(X,\Sigma)/(Y,\sigma)$ is a \emph{finite} Galois cover of transformally integral difference schemes of finite transformal type over a transformal domain $(R,\varsigma)$ with group $G$ as in \cite{ive-tch} (the extension of associated function fields is algebraically finite). By $\sigma$-localising $Y$ (and $X$), we can obtain a direct Galois cover with a rather special property that $\rlexp{()}{\pi_1}:G_1\to G_0$ is an isomorphism (and both groups are isomorphic to $G$).

Note that more general direct covers considered in this paper (in spite of having finite fibres) correspond to situations in which the extension of the underlying function fields is algebraic (of finite transformal type) but not necessarily finite.
\end{remark}


\begin{thebibliography}{10}

\bibitem{dejong-stacks}
The Stacks~Project Authors.
\newblock {\itshape Stacks Project}.
\newblock \url{http://math.columbia.edu/algebraic_geometry/stacks-git}.

\bibitem{bourbaki-alg-comm}
N.~Bourbaki.
\newblock {\em \'{E}l\'ements de math\'ematique. {A}lg\`ebre commutative.
  {C}hapitre 5: {E}ntiers. {C}hapitre 6: {V}aluations}.
\newblock Actualit\'es Scientifiques et Industrielles, No. 1308. Hermann,
  Paris, 1964.

\bibitem{zoe-udi}
Zo{\'e} Chatzidakis and Ehud Hrushovski.
\newblock Model theory of difference fields.
\newblock {\em Trans. Amer. Math. Soc.}, 351(8):2997--3071, 1999.

\bibitem{cohn}
Richard~M. Cohn.
\newblock {\em Difference algebra}.
\newblock Interscience Publishers John Wiley \& Sons, New York-London-Sydney,
  1965.

\bibitem{fried-sacer}
M.~Fried and G.~Sacerdote.
\newblock Solving {D}iophantine problems over all residue class fields of a
  number field and all finite fields.
\newblock {\em Ann. of Math. (2)}, 104(2):203--233, 1976.

\bibitem{FHJ}
Michael~D. Fried, Dan Haran, and Moshe Jarden.
\newblock Effective counting of the points of definable sets over finite
  fields.
\newblock {\em Israel J. Math.}, 85(1-3):103--133, 1994.

\bibitem{fried-jarden}
Michael~D. Fried and Moshe Jarden.
\newblock {\em Field arithmetic}, volume~11 of {\em Ergebnisse der Mathematik
  und ihrer Grenzgebiete. 3. Folge. A Series of Modern Surveys in Mathematics}.
\newblock Springer-Verlag, Berlin, third edition, 2008.
\newblock Revised by Jarden.

\bibitem{sga1}
Alexandre Grothendieck.
\newblock {\em Rev\^etements \'etales et groupe fondamental ({SGA} 1)}.
\newblock Documents Math\'ematiques (Paris), 3. Soci\'et\'e Math\'ematique de
  France, Paris, 2003.
\newblock S{\'e}minaire de g{\'e}om{\'e}trie alg{\'e}brique du Bois Marie
  1960--61. Directed by A. Grothendieck, With two papers by M. Raynaud, Updated
  and annotated reprint of the 1971 original [Lecture Notes in Math., 224,
  Springer, Berlin; (50 \#7129)].

\bibitem{udi-mm}
Ehud Hrushovski.
\newblock The {M}anin-{M}umford conjecture and the model theory of difference
  fields.
\newblock {\em Ann. Pure Appl. Logic}, 112(1):43--115, 2001.

\bibitem{udi}
Ehud Hrushovski.
\newblock The elementary theory of the {F}robenius automorphisms.
\newblock {\tt arXiv:math/0406514}, 2004.
\newblock The most recent version of the paper (2012) is available at
  \url{http://www.ma.huji.ac.il/~ehud/FROB.pdf}.

\bibitem{lang}
Serge Lang.
\newblock {\em Algebra}, volume 211 of {\em Graduate Texts in Mathematics}.
\newblock Springer-Verlag, New York, third edition, 2002.

\bibitem{angus}
Angus Macintyre.
\newblock Generic automorphisms of fields.
\newblock {\em Ann. Pure Appl. Logic}, 88(2-3):165--180, 1997.
\newblock Joint AILA-KGS Model Theory Meeting (Florence, 1995).

\bibitem{orgogozo}
David~A. Madore and Fabrice Orgogozo.
\newblock Calculabilit{\'e} de la cohomologie {\'etale} modulo $l$.
\newblock {\tt arXiv:1304.5376}.

\bibitem{ive-mark}
Mark Ryten and Ivan Toma{\v{s}}i{\'c}.
\newblock A{CFA} and measurability.
\newblock {\em Selecta Math. (N.S.)}, 11(3-4):523--537, 2005.

\bibitem{seidenberg}
A.~Seidenberg.
\newblock Constructions in algebra.
\newblock {\em Trans. Amer. Math. Soc.}, 197:273--313, 1974.

\bibitem{ive-tch}
Ivan Toma{\v{s}}i{\'c}.
\newblock A twisted theorem of {C}hebotarev.
\newblock {\em Proc. Lond. Math. Soc. (3)}, 108(2):291--326, 2014.

\bibitem{ive-tgsacfa}
Ivan Toma{\v{s}}i{\'c}.
\newblock Galois stratification and {ACFA}.
\newblock {\em Ann. Pure Appl. Logic}, 166(5):639--663, 2015.

\bibitem{ive-tgs}
Ivan Toma{\v s}i{\'c}.
\newblock Twisted {G}alois stratification.
\newblock {\em Nagoya Mathematical Journal}, 222:1--60, 6 2016.

\bibitem{wibmer-dif}
Michael Wibmer.
\newblock Algebraic difference equations.
\newblock
  \url{http://www.algebra.rwth-aachen.de/de/Mitarbeiter/Wibmer/AlgebraicDifferenceEquations.pdf}.
\newblock Lecture notes, 2013.

\end{thebibliography}
\end{document}